\theoremstyle{plain}
\newtheorem{thm}{Theorem}[section]
\newtheorem{cor}[thm]{Corollary}
\newtheorem{conj}[thm]{Conjecture}
\newtheorem{prop}[thm]{Proposition}
\newtheorem{lemma}[thm]{Lemma}
\newtheorem*{claim*}{Claim}
\theoremstyle{definition}
\newtheorem{remark}[thm]{Remark}
\newtheorem{example}[thm]{Example}
\newcommand{\comment}[1]{}
\newcommand{\bdry}{\ensuremath{\partial}}
\newcommand{\N}{\ensuremath{\mathbb{N}}}
\newcommand{\Q}{\ensuremath{\mathbb{Q}}}
\newcommand{\R}{\ensuremath{\mathbb{R}}}
\newcommand{\Z}{\ensuremath{\mathbb{Z}}}
\newcommand{\C}{\ensuremath{\mathbb{C}}}
\newcommand{\0}{{\mbox{\sc{O}}}}
\newcommand{\I}{{\mbox{\sc{I}}}}
\newcommand{\II}{{\mbox{\sc{I\!I}}}}
\newcommand{\III}{{\mbox{\sc I\!I\!I}}}
\newcommand{\IV}{{\mbox{\sc{I\!V}}}}
\newcommand{\V}{{\mbox{\sc{V}}}}
\newcommand{\VI}{{\mbox{\sc{V\!I}}}}
\newcommand{\mobius}{M\"{o}bius }
\newcommand{\cut}{\ensuremath{\backslash}}
\definecolor{light-gray}{gray}{0.75}
\newcommand\textsubscript[1]{\@textsubscript{\selectfont#1}}
\def\@textsubscript#1{{\m@th\ensuremath{_{\mbox{\fontsize\sf@size\z@#1}}}}}
\newcommand\textbothscript[2]{%
  \@textbothscript{\selectfont#1}{\selectfont#2}}
\def\@textbothscript#1#2{%
  {\m@th\ensuremath{%
    ^{\mbox{\fontsize\sf@size\z@#1}}%
    _{\mbox{\fontsize\sf@size\z@#2}}}}}
\def\@super{^}\def\@sub{_}
\def\@super@sub#1_#2{\textbothscript{#1}{#2}}
\def\@sub@super#1^#2{\textbothscript{#2}{#1}}
\def\@@super#1{\@ifnextchar_{\@super@sub{#1}}{\textsuperscript{#1}}}
\def\@@sub#1{\@ifnextchar^{\@sub@super{#1}}{\textsubscript{#1}}}
\title{
The Classification of Rational Subtangle Replacements between Rational Tangles}
\author{Kenneth L.\ Baker}
\address{
Department of Mathematics,
University of Miami,
PO Box 249085
Coral Gables, FL 33124-4250}
\email{k.baker@math.miami.edu}
\urladdr{http://math.miami.edu/\char126 kenken}
\author{Dorothy Buck}
\address{Dept of Mathematics, Imperial College London, South Kensington, London England SW7 2AZ}
\email{d.buck@imperial.ac.uk}
\urladdr{http://www2.imperial.ac.uk/~dbuck/}
\begin{document}

\begin{abstract}
A natural generalization of a crossing change is a rational subtangle replacement (RSR).
We characterize the fundamental situation of the rational tangles obtained from a given rational tangle via RSR, building on work of Berge and Gabai, and determine the sites where these RSR may occur.
In addition we also determine the sites for RSR distance at least two between 2-bridge links. 
These proofs depend on the geometry of the branched double cover.
Furthermore, we classify all knots in lens spaces whose exteriors are generalized Seifert fibered spaces and their lens space surgeries, extending work of Darcy-Sumners.   
This work is in part motivated by the common biological situation of proteins cutting, rearranging and resealing DNA segments -- effectively performing RSR on DNA `tangles'.
\end{abstract}

\maketitle

\section{Introduction}

All knots in the $3$--sphere are related by a sequence of crossing changes. This leads to deep questions:  Can a single crossing change transform a given knot into the unknot?  \textit{Which} two links related by a single crossing change? When do two different crossing changes relate the same pair of links? \textit{Where} are all these crossing changes?

A \textit{rational (sub)tangle replacement}, RSR, is a natural generalization of a crossing change, and thus prompts similar questions.    
An RSR is the excision of a rational subtangle from a $3$--manifold with a properly embedded $1$--manifold followed by the insertion of another.   Foundational and recent articles addressing these questions, using a variety of techniques, include \cite{taylor,EudaveMunoz,Lick2,ernst,ES1,withMauro,withCyn1,withCyn2,Mariel,darcysumners,DarcyLuecke,Wu}.

The primary goal of this article is to address these questions for the foundational case of RSR within rational tangles.  We classify both \textit{which} pairs of rational tangles may be related by an RSR and \textit{where} these RSR occur.\footnote{ We use the term {\em rational subtangle replacement} instead of the more common {\em rational tangle replacement} to emphasize that the replacement itself occurs within a rational tangle and is generically not the replacement of that entire rational tangle. }

The \textit{which} part of this classification follows fairly directly from work of Berge \cite{berge} and Gabai \cite{gabai} (and Moser \cite{moser}) on surgeries on knots in solid tori yielding solid tori via the Montesinos Trick \cite{montesinos}.  We exhibit this more explicitly by presenting the corresponding rational tangles and an RSR between them.  While Berge classifies the knots in solid tori with solid torus surgeries, the \textit{where} part of the classification of RSR between rational tangles does not follow from the Montesinos trick.  Indeed two non-homeomorphic tangles may have homeomorphic branched double covers.  We complete the \textit{where} part of this classification in part by generalizing a theorem of Ernst \cite{ernst}, in part by adapting work of Paoluzzi \cite{paoluzzi}, and in part by regarding the tangles as hyperbolic orbifolds.  The first relies upon the corresponding knot exteriors being Seifert fibered, the second addresses mutations of tangles and involutions of manifolds with non-trivial JSJ decompositions, while the last relies upon the hyperbolic orbifold surgery theorem.

We also discuss the related questions of which pairs of $2$--bridge links are related by an RSR and where these RSR occur.  We revisit the classification by Darcy-Sumners \cite{darcysumners} (see also Torisu \cite{torisu2}) of which $2$--bridge links are related by an RSR of distance at least $2$ (precise definitions will be given in Subsection~\ref{sec:rsrdef})  and apply Ernst's theorem to classify where these RSR occur.  Work of Greene \cite{greene} gives the classification of which $2$--bridge links are related to the unknot by a distance $1$ RSR while work of Lisca \cite{lisca} can be extended to give the classification of which $2$--bridge links are related to the two component unlink by a distance $1$ RSR\footnote{Greene \cite{greene} has a remark that suggests Rasmussen had observed this result follows from Lisca's earlier work \cite{lisca}.}.  In both these cases, there are conjectural pictures of where such RSR occur.
The general question of which $2$--bridge links are related by a distance $1$ RSR is still open.

This project is in part motivated by the common biological situation of proteins cutting, rearranging and resealing DNA segments -- effectively performing RSR on DNA `tangles'.

\subsection{RSR between rational tangles}
As rational tangles are commonly parametrized by the extended rational numbers, we provide the classification of which rational tangles are related by an RSR in terms of this parametrization.
Our main result is the following:

\begin{thm}\label{thm:full}
If an RSR of distance $d$ takes the $p/q$--tangle to the $u/v$--tangle then $u/v$ belongs to one of the following families of rational numbers (depending on $p/q$ and $d$):

\begin{itemize}
\item[\0.] $(d\geq1) \quad \Big\{ \frac{u}{v} \, \Big\vert \, pv-qu =  \pm d \Big\}$  and the RSR is the full replacement,

\item[\I.]  $(d\geq1) \quad  \Big\{ \frac{p +\epsilon d a(aq-bp)}{q +\epsilon db(aq-bp)}  \, \Big\vert \, a,b \mbox{ coprime}, \epsilon = \pm  \Big\}$,

\item[\II.] $(d=1) \quad \Big\{  \frac{p +\epsilon 4 a(aq-bp)}{q +\epsilon 4 b(aq-bp)}  \, \Big\vert \,  a,b \mbox{ coprime}, \epsilon = \pm  \Big\}$,

\item[\III.] $(d=1)  \quad \Big\{ \frac{p(b-1)(4ab-4a-2b-1) +\epsilon p'(2ab-2a-b)^2 }{q(b-1)(4ab-4a-2b-1) +\epsilon q'(2ab-2a-b)^2}, 
\frac{p(1-2a)^2(b-1) +\epsilon p'(2ab-2a-b)^2 }{q(1-2a)^2(b-1) +\epsilon q'(2ab-2a-b)^2} \, \Big\vert $
\begin{flushright}
$ a,b,p',q' \in \Z, \, pq'-p'q=1, \epsilon = \pm \Big\}$, 
\quad \quad \quad
\end{flushright}

\item[\IV.] $(d=1)$
$\Big\{ \frac{p(2a-1)(2ab+a-b+1) +\epsilon p'(2ab+a-b)^2}{q(2a-1)(2ab+a-b+1) +\epsilon q'(2ab+a-b)^2},
 \frac{p(2b+1)(2ab+a-b+1)  +\epsilon p'(2ab+a-b)^2}{q(2b+1)(2ab+a-b+1)  +\epsilon q'(2ab+a-b)^2} \, \Big\vert$
 \begin{flushright}
$ a,b,p',q' \in \Z, \, pq'-p'q=1, \epsilon = \pm \Big\}$.
\quad \quad \quad 
\end{flushright}
\end{itemize}

Moreover the RSR occur, up to homeomorphism, only at the core arc for family \0\ or as indicated in Figure~\ref{fig:d=1} 
for families \I\ -- \IV.
\end{thm}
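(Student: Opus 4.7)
The plan is to pass to the branched double cover and apply the Montesinos trick. The branched double cover of a $p/q$-tangle $(B,t)$ is a solid torus $V_{p/q}$; the four-punctured sphere $\partial B \setminus t$ lifts to a torus, and the four tangle endpoints equip this torus with a canonical framing with respect to which the meridian disk of $V_{p/q}$ has slope $p/q$. Under this correspondence, a distance-$d$ RSR within $(B,t)$ lifts to a Dehn surgery along a knot $K\subset V_{p/q}$ whose slope $\alpha$ satisfies $\Delta(\mu_K,\alpha)=d$, and the RSR produces another rational tangle exactly when this surgery yields a solid torus.

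For the \emph{which}-part, one invokes the classification, due to Berge, Gabai, and Moser, of knots $K$ in a solid torus admitting a non-trivial solid torus surgery: the core (producing family \0), together with four families of Berge--Gabai knots built from torus knots in the solid torus, their cables, and $1$-bridge braids. For each family the surgery slopes producing a solid torus are tabulated, with distance $d$ from the meridian arbitrary for families \0\ and \I, but forced to be $1$ for families \II--\IV. A direct homological computation then translates each such surgery slope back upstairs into the meridian slope $u/v$ of the new solid torus with respect to the framing descending from the tangle, yielding the explicit expressions in \0\ and \I--\IV.

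The \emph{where}-part is the substantive content of the theorem. Two RSR sites yield homeomorphic tangle pairs if and only if the two knots $K\subset V_{p/q}$ are interchanged by a self-homeomorphism of $V_{p/q}$ commuting with the covering involution $\tau$. Since ordinary homeomorphisms of $V_{p/q}$ need not lift to tangle homeomorphisms, the real task is to control the $\tau$-equivariant mapping class group of the pair $(V_{p/q},K)$. I would split by the geometry of the exterior $E=V_{p/q}\setminus N(K)$: when $E$ is Seifert fibered, apply an extension of Ernst's theorem to classify the admissible involutions; when $E$ has a non-trivial JSJ decomposition, adapt Paoluzzi's arguments that track involutions across essential tori; when $E$ is hyperbolic, apply the hyperbolic orbifold surgery theorem to conclude that the covering involution is essentially unique on $E$. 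The main obstacle is precisely this equivariance requirement, which prevents a direct appeal to standard knot-classification results and forces the three geometric analyses to be carried out in parallel; the conclusion in each case is that, up to tangle homeomorphism, the RSR occurs at the core arc (family \0) or at the site shown in Figure~\ref{fig:d=1} (families \I--\IV).
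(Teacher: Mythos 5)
Your strategy---pass to the branched double cover via the Montesinos trick, invoke the Berge--Gabai--Moser classification of knots in solid tori with solid torus surgeries for the \emph{which}-part, and split the \emph{where}-part into Seifert fibered, toroidal, and hyperbolic cases handled respectively by an extension of Ernst, an adaptation of Paoluzzi, and orbifold Dehn surgery---is exactly the strategy of the paper; it is the content of Propositions~\ref{prop:d>1} and \ref{prop:d=1} and of Theorems~\ref{thm:sitessimple}, \ref{thm:uniquetangles}, \ref{thm:satelliteRSR}, and \ref{thm:hyperbolicRSR}. The difference is organizational: the paper first proves a normalized statement (Theorem~\ref{thm:mainthm}) in which the pair is put in the form $\{1/0,r/s\}$ and $r/s$ is read off from explicit plat presentations and continued fractions, and the proof of the present theorem is then a short change of coordinates using the action of $PSL_2(\Z)$ on unordered pairs of slopes (Lemmas~\ref{lem:pairsofcurves} and \ref{lem:pairsoftangles}); you propose instead to compute $u/v$ homologically in the $p/q$--frame directly. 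Either route can work, but your sketch omits the step that actually generates the stated formulas: since the pair $\{\tau,\tau'\}$ is unordered and only determined up to mirroring, the set of admissible $u/v$ must be closed under replacing $r/s$ by $\pm r/s$ and by $\pm s'/s$ with $s'\equiv r^{-1} \bmod s$---equivalently, under passing from a Berge--Gabai knot to its surgery dual. This symmetry is what collapses Berge's six families to the four appearing here (families \V\ and \VI\ are duals of knots in family \III), what lets family \I\ be written in a single closed form (there one may take $s'=1-dab$), and what produces the \emph{two} expressions in each of families \III\ and \IV\ together with the parameters $p',q'$ satisfying $pq'-p'q=1$, which record the non-canonical choice of longitude for the covering solid torus. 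A ``direct homological computation'' that tracks only the surgery slope of $K$, without also accounting for the dual knot and for the ambiguity in the framing of $V_{p/q}$, would return only half of families \III\ and \IV\ and would miss the $\epsilon=\pm$ throughout; with those symmetries incorporated your argument coincides with the paper's.
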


\begin{proof}
Apply Lemma~\ref{lem:pairsoftangles} to Theorem~\ref{thm:mainthm}, sending $1/0$ to $p/q$ and $r/s$ to $u/v$.  

In doing so,  note that for family \I\  we may take $s' = 1-dab$ since $(1-dab)(1+dab) \equiv 1 \mod b^2$. Therefore Lemma~\ref{lem:pairsoftangles} sends $\frac{r}{s} = \frac{1+dab}{da^2}$ to 
\[\frac{u}{v} \in \left\{ \frac{p(1+\delta dab)+\epsilon p' (da^2)}{q(1+\delta dab)+\epsilon q' (da^2)}\  \Big\vert \  p', q' \in \Z, p q' - q p'=1, \epsilon = \pm 1, \delta = \pm1 \right\}.\] 
For each $p', q' \in \Z$ with $p q' - q p'=1$ replace the coprime pair $(a,b)$ with the coprime pair $(qA-pB, \epsilon \delta(q'A-p'B))$.  This leads to the form stated for family \I.     Family \II\ works similarly, but with $d$ replaced by $4$.

For families \III\ and \IV, we may take $s'$ to be the denominator of  $[0,-a,-2,-b,-2, a, 1]= \frac{4ab-4a-2b+3}{(1-2a)^2(b-1)}$ and $[0,-b-1, -1, 1, -a +1, b, -2]=\frac{4ab+2a-2b+3}{(2b+1)(2ab+a-b+1)}$ respectively.

Theorem~\ref{thm:sitessimple} gives the last statement of this theorem.
\end{proof}

Our results may find use when viewed in the context of cobordisms of spheres with four marked points.   Our classification determines the tangles in $S^2 \times I$ for which one end may be capped off with a rational tangle in at least two different ways to produce rational tangles.

\begin{thm}
Up to homeomorphism, the tangles in $S^2 \times I$ for which one boundary component may be filled in at least two different ways with a rational tangle to yield a rational tangle are those shown in Figure~\ref{fig:fillingsofS2xI}.   In particular:
\begin{itemize}
\item[$\Q$.] If every rational tangle filling produces a rational tangle, then the tangle is the product tangle.
\item[$\Z$.] If a $1$--parameter family of fillings produces rational tangles, then the tangle belongs to family \I.
\item[3.] If exactly three fillings produce rational tangles, then the tangle is the Berge Tangle.
\item[2.] If exactly two fillings produce rational tangles, then the tangle belongs to family \II, \III, or \IV.
\end{itemize}
\end{thm}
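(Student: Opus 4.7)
The plan is to translate the statement into a question about RSR sites inside rational tangles and then apply Theorem~\ref{thm:full}. Given $T \subset S^2 \times I$ with boundary $F_0 \sqcup F_1$, write $\mathcal{F}(T)$ for the set of rational tangle fillings $R$ of $F_0$ such that $R \cup T$ is itself a rational tangle. Fix any $R_0 \in \mathcal{F}(T)$ and set $R'_0 := R_0 \cup T$; then every other $R_1 \in \mathcal{F}(T)$ with $R'_1 := R_1 \cup T$ yields an RSR $R'_0 \leftrightarrow R'_1$ located at the site $R_0 \subset R'_0$, and conversely every rational tangle reachable from $R'_0$ by an RSR at this site gives a further element of $\mathcal{F}(T)$. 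Hence the homeomorphism class of $T$ is determined by the site, and $|\mathcal{F}(T)|$ equals the number of rational tangles reachable from $R'_0$ by an RSR at this site.

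Applying Theorem~\ref{thm:full}, every such site is (up to homeomorphism) either the core arc of $R'_0$ (family \0) or one of the sites shown in Figure~\ref{fig:d=1} for families \I--\IV, and I would argue case by case. Family \0: here $T$ is the product tangle in $S^2 \times I$, and every rational filling $R$ produces $R \cup T = R$, a rational tangle, so every filling works---this is case $\Q$. Family \I: the branched double cover of $T$ is Seifert fibered over a disk with two exceptional fibers, and by the generalization of Ernst's theorem proved in the paper, combined with Berge's classification of solid torus surgeries on knots in solid tori, there is a $\mathbb{Z}$-parameter family of $\tau$-equivariant Dehn fillings of the lifted $F_0$-torus yielding lens spaces, hence a $\mathbb{Z}$-parameter family of $R \in \mathcal{F}(T)$---this is case $\Z$. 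Families \II, \III, \IV: the branched double cover is no longer Seifert fibered, so Paoluzzi's work on mutations of tangles (in the toroidal JSJ case) together with the hyperbolic orbifold surgery theorem (in the hyperbolic case) force $|\mathcal{F}(T)| = 2$, namely the trivial filling $R_0$ together with the one prescribed by the family---this is case $2$. The Berge Tangle is the distinguished sporadic site arising from Berge's exceptional examples of knots in solid tori admitting three solid torus surgeries; it has $|\mathcal{F}(T)| = 3$, giving case $3$.

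The main obstacle I expect is the precise count of fillings per site. For family \I the count requires a full analysis of which Dehn fillings of a Seifert fibered piece with torus boundary yield lens spaces, beyond those captured directly by the $(a,b)$-parametrized RSR formulas. For families \II--\IV and the Berge Tangle, finiteness requires ruling out additional exceptional fillings using the rigidity of hyperbolic orbifolds and the JSJ structure of the branched double cover. Pinning down the Berge Tangle as the unique $3$-filling case is the most delicate point, since it is not part of the $(a,b)$-parametrized families \I--\IV but rather arises as a sporadic example in Berge's classification---one must verify that no other tangle admits exactly three fillings, and that the tangle produced by branched double-covering Berge's exceptional knot in a solid torus is a single well-defined object up to homeomorphism.
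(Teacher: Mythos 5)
Your proposal takes essentially the same route as the paper: translate fillings into RSR sites, invoke the site classification (Theorem~\ref{thm:sitessimple}, equivalently the last clause of Theorem~\ref{thm:full}), identify $T$ as the exterior of a site, and then count solid-torus fillings of the lifted knot exterior family by family, exactly as the paper does via Section~\ref{sec:bergetangle}. Two details are off, though neither derails the argument. First, for family \I\ the branched double cover of $T$ is a cable space --- a Seifert fibration over the \emph{annulus} with one exceptional fiber (Theorem~\ref{thm:uniquetangles}, Case~(II)) --- not over the disk with two exceptional fibers; a two-boundary-component tangle cannot have the latter as its double cover. Second, the exact counts in cases $2$ and $3$ do not come from Paoluzzi's mutation analysis or the hyperbolic orbifold surgery theorem: those tools establish that the tangle is determined up to homeomorphism by its branched double cover (i.e.\ the site classification you already invoked), while the orbifold surgery theorem by itself yields only \emph{finiteness} of exceptional fillings, not the number $2$. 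The counts come from Gabai's theorem that a non-torus knot in a solid torus admits at most three solid-torus surgeries, mutually distance $1$, together with Berge's result that the knot realizing three is unique up to homeomorphism of $V$; combined with the uniqueness of the quotient tangle this pins down the Berge tangle as the sole $3$--filling case and forces exactly two fillings for all other members of families \II, \III, \IV.
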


\begin{proof}
This follows from Theorem~\ref{thm:sitessimple}, the classification of sites upon which an RSR between rational tangles may occur.  The exteriors of these sites are then the tangles in $S^2 \times I$ of the present theorem.  Section~\ref{sec:bergetangle} addresses the division according to the number of fillings.  Figure~\ref{fig:fillingsofS2xI} shows the various tangles in $S^2 \times I$ and their rational tangle fillings to their left.
\end{proof}

\subsection{Biological Motivation}

Many proteins operate on DNA by cutting, rearranging and resealing the DNA molecule in a localised way. This cut-and-seal mechanism can be simple -- e.g.\  type II topoisomerases preferentially unknot or unlink DNA molecules by performing crossing changes on the DNA axis \cite{ZechNAR,VologNAR}.  Or this mechanism can be quite complex -- e.g. site-specific recombinases invert, excise and insert DNA segments by concerted strand exchanges, crossover reactions and ligations that can be intricately choreographed \cite{GrindRice}. 

A natural way to model this is to represent the axis of the DNA double helix as a curve, and to capture the (often intertwined) geometry of this axis as various \textit{tangles} (defined in Section \ref{sec:defn}).  For example, the topoisomerase-mediated crossing change can be modelled as converting a $(+1)$ tangle into a $(-1)$ tangle or \textit{vice versa}.  

When the DNA molecule is covalently closed, these localized DNA transformations can be modelled in terms of the initial and resultant DNA knots or links.  This approach, pioneered by Ernst and Sumners \cite{ES1} and now expanded and employed by a number of researchers (\cite{review} and references therein),  has enjoyed tremendous success.   Here, several tangles -- each representing particular segments of the DNA axis -- are glued together in particular ways to form a specific knot or link.  The protein action is modelled as pulling out one particular tangle and replacing it with another, thus converting one specific knot into another.  The general strategy is then to use these known knots as probes to determine the constituent tangles by studying the Dehn surgeries between the branched double covers of these knots. 
The predictions of these models have been used to illuminate both the mechanism and the pathway of these protein-mediated DNA reactions, e.g.  \cite{withIan}.

However, an outstanding issue has been what to do when the DNA axis is linear, or in fact a knot or link that is not a 4-plat\footnote{For example, the most typical knots arising from site-specific recombination are small Montesinos knots and links \cite{withErica1,withErica2}, a superfamily of 4-plats.}.  Here the typical arguments begin to falter; in particular, one can no longer rely on the Cyclic Surgery Theorem \cite{cgls} to classify lens space surgeries.  The current work --- by focusing exclusively on the replacement of (rational) tangles --- classifies when these types of protein-DNA interactions can occur without relying on a global topology.  Thus one can now model recombination or topoisomerase simplification of these non-4-plat knots or links, or indeed, of linear DNA.  

For example, Theorem~\ref{thm:sites2bridge} can restrict the possibile mechanism of type II topoisomerases (proteins whose sole function is to unknot or unlink DNA molecules).  
Among other things, this theorem determines where (up to homeomorphism) the two DNA segments must be to perform a crossing change converting one specified $2$--bridge link into another.  In particular then, Theorem~\ref{thm:sites2bridge} characterizes the most effective locations for maximal unknotting/unlinking efficiency --- a current hotly debated question (see e.g.\ \cite{VologNAR,ZechNAR,TopoRef3,TopoRef4,TopoRef5,TopoRef6,TopoRef7,TopoRef8}).

\subsection{Overview}
Section~\ref{sec:defn} gives the basic definitions and notation that will be used throughout.  Then the proof of Theorem~\ref{thm:mainthm}, which classifies \textit{which} RSR can occur, is assembled in Section~\ref{sec:RSR}.  
Section~\ref{sec:bergetangle} exhibits a non-trivial tangle in $S^2 \times I$, the {\em Berge tangle},  that has three mutually distance $1$ rational tangle fillings.  
In Section~\ref{sec:2bridge} we discuss recent classifications of distance $1$ RSR between $2$--bridge links and either the unknot or unlink of two components.  Furthermore, to highlight the difference in RSR between rational tangles and RSR between $2$--bridge links, Section~\ref{sec:4platadj} provides an example of a pair of  $2$--bridge links related by a distance $1$ RSR that does not arise from the closures of a pair of rational tangles related by a distance $1$ RSR. 

In Section~\ref{sec:sites} we discuss the sites \textit{where} RSR can occur.   Up to homeomorphism, Theorem~\ref{thm:sitessimple} determines the sites of RSR between rational tangles while Theorem~\ref{thm:sites2bridge} determines the sites of distance $d \geq 2$ RSR between $2$--bridge links.   The techniques we use in the proofs depend on the geometry of branched double cover.  For distance 1 RSR in Theorem~\ref{thm:sitessimple} the site lifts to a knot whose exterior can be hyperbolic, a cable space, or Seifert fibered.  For the hyperbolic manifolds we treat the tangles as hyperbolic orbifolds and consider surgery for orbifolds.  

Finally, to further illuminate work contained in \cite{darcysumners}, in Section~\ref{sec:seifertfibered} we catalogue the knots in lens spaces whose exteriors are generalized Seifert fibered spaces and the surgeries on them which produce lens spaces.
Of particular note, we identify and study the knots isotopic to a regular fiber in a true Seifert fibration with non-orientable base of a lens space; among those with lens space surgeries, all may be viewed as lens space torus knots except one up to homeomorphism.

\subsection{Acknowledgements}
The authors would like to thank Dror Bar-Natan, Josh Greene, Neil Hoffman, Jesse Johnson, Louis Kauffman, Andrew Lobb, John Luecke, Nikolai Saveliev, and Mike Williams for useful conversations.  DB is partially supported by EPSRC grants EP/H0313671, EP/G0395851 and  EP/J1075308.  KB is partially supported by the University of Miami 2011 Provost Research Award and by a grant from the Simons Foundation (\#209184 to Kenneth Baker).

\section{Basic definitions}\label{sec:defn}

\subsection{Lens spaces and Dehn surgeries}
On a torus $T$, two essential simple closed curves have {\em distance} $d$ if they may be isotoped to minimally intersect $d$ times.  Given a $3$--manifold $M$ with a torus boundary component $T$, the attachment of a solid torus $V$ to $M$ by an identification of $\bdry V$ and $T$ to produce another $3$--manifold is a {\em Dehn filling} of $M$ along $T$.     If $K$ is a knot in $M$, then a {\em Dehn surgery} on $K$ is a Dehn filling on $M-N(K)$ along $\bdry N(K)$.  The distance of a Dehn surgery or between two Dehn fillings is the distance on the relevant boundary torus of the meridians of the solid tori being attached.

A {\em lens space} is a $3$--manifold that may be expressed as the union of two solid tori joined along their torus boundaries.  Both $S^3$ and $S^1 \times S^2$ are lens spaces.  The two solid tori are called the {\em Heegaard solid tori} of the lens space and their common boundary torus is the {\em Heegaard torus} of the lens space.  Up to isotopy, such a decomposition of a lens space is unique \cite{bonahon,hodgsonrubinstein}.
A {\em (lens space) torus knot} is a knot in a lens space that is isotopic to a curve on the Heegaard torus.  Observe that the core curves of the Heegaard solid tori are special sorts of torus knots.

\subsection{Tangles}
In general, a {\em tangle} is the pair of a $3$--manifold and a properly embedded $1$--manifold (often with multiple components), 
though we often speak of the tangle as the $1$--manifold in the $3$--manifold.  Here we will only be concerned with the $3$--manifold being either a $3$--ball or $S^2 \times I$, arising as the $3$--ball with the interior of another deleted.  
Such tangles may be considered equivalent up to homeomorphism, isotopy, or isotopy rel--$\bdry$ (an isotopy of the $1$--manifold fixing its boundary).

A {\em marked sphere} is the pair $(S, {\bf x})$ of a sphere $S$ with four marked points ${\bf x}$.  In this article, a {\em (two-strand) tangle} $\tau$ will typically refer to the tangle that is the pair $\tau = (B,t)$ of a ball $B$ with boundary $S$ and a pair of strands $t$ properly embedded in $B$ with boundary ${\bf x}$, usually considered up to strong equivalence.  Notice $\bdry \tau  = (\bdry B, \bdry t) = (S, {\bf x})$.  A {\em trivial tangle} is a two-string tangle $\tau=(B,t)$, regarded up to homeomorphism, in which there is a disk $D$, called the {\em meridional disk} of $\tau$, properly embedded in $B$ disjoint from $t$ that separates $\tau$ into two balls each containing a single unknotted strand.

Observe that an arc $\alpha$ embedded in a tangle $\tau$ with only its endpoints on the strands of $\tau$ has a small regular neighborhood $N(\alpha)$ that intersects $\tau$ in a trivial tangle.  If $\tau$ is a trivial tangle itself, then $\alpha$ is a {\em core arc} of $\tau$ if $\tau-N(\alpha)$ is homeomorphic to $\bdry \tau \times I$, the {\em product tangle} in $S^2 \times I$.  In a fixed trivial tangle $\tau$, two core arcs are always isotopic, keeping their boundaries on the strands of $\tau$.

With an identification of their boundaries to the fixed marked sphere $(S, {\bf x})$ and considered up to isotopy rel--$\bdry$, the trivial tangles are called {\em rational tangles}.  The boundary of a meridional disk of a rational tangle, considered up to isotopy in $S-{\bf x}$, is the {\em meridian} of the rational tangle.

\subsection{Strong inversions}
A link $L$ in $3$--manifold $M$ is said to be {\em strongly invertible} if there is an orientation preserving involution $\iota$ on $M$ such that $\iota(L)=L$ and each component of $L$ intersects the fixed set of $\iota$ twice. The involution $\iota$ is then said to be a {\em strong involution} for $L$.  If the fixed set of $\iota$ meets each torus component of $\bdry M$ in four points, then we say $\iota$ is a strong involution for $M$ even if $L = \emptyset$.   This involution induces a tangle $\tau$ comprised of the $3$--manifold quotient of $M$ by $\iota$ containing the $1$--manifold image of the fixed set.  If the link $L$ is strongly invertible under $\iota$, then it descends to a collection of arcs embedded in $\tau$ with neighborhoods intersecting $\tau$ in trivial tangles. 

\subsection{Rational subtangle replacements} \label{sec:rsrdef} 
Two rational tangles $\rho $ and $\rho'$ (with an identification $\bdry \rho = \bdry \rho'=(S,{\bf x})$), with meridional disks $D$ and $D'$ respectively, have {\em distance $d$} if their meridians $\bdry D$ and $\bdry D'$ may be isotoped in $S - {\bf x}$ to minimally intersect $2d$ times. 
Two tangles  $\tau$ and $\tau'$ are related by a {\em distance $d$ rational subtangle replacement (RSR)} if there is a ball $B_0$ intersecting $\tau$ in a rational tangle $\rho =B_0 \cap \tau$ such that replacing $\rho$ with a rational tangle $\rho'$ of distance $d$ produces $\tau'$.  Such a ball $B_0$ may be determined by an arc $\alpha$ meeting the strands of $\tau$ only at its endpoints (by taking $B_0$ to be a small closed regular neighborhood of the arc $\alpha$).  We refer to both $B_0$ and $\alpha$ as the {\em site} of the RSR on $\tau$.
 Moreover, in the branched double cover of $\tau$, the arc $\alpha$ lifts to a knot $K$ with a solid torus neighborhood that is the lift of $B_0$.  If $\alpha$ is the core arc of the initial rational tangle $\rho$ in an RSR $\rho \mapsto \rho'$ of distance $d$ taking $\tau$ to $\tau'$, then the branched double cover of $\tau'$ may be obtained by the distance $d$ Dehn surgery on $K$ corresponding to the branched double cover of  $\tau$.

\subsection{Plats and continued fractions}
Now view the $3$--ball $B$ as the $1$--point compactification of lower half-space $\{ z \leq 0\} \subset \R^3$ and place the four marked points ${\bf x}$ on the $x$--axis at $(i,0,0)$ for $i=1,2,3,4$.  A rational tangle in $B$ with endpoints at ${\bf x}$ may then be arranged so that its $z$--coordinates have only two local minima.  It may then further be arranged into an {\em open $4$--plat} form with projection to the $xz$--plane as shown on the left of Figure~\ref{fig:rationalplat}.  The oblong rectangles labeled with integers indicate twist regions where the longer direction of a rectangle gives the twist axis.  Its integer gives the number half-twists where the sign determines the handedness, as illustrated on the right of Figure~\ref{fig:rationalplat}.  We always assume the first twist region at the top occurs between the first two strands, permitting $0$ twists if needed.  Reading downwards, we obtain the sequence of integers $\{a_1, a_2, \dots, a_k\}$ which are the coefficients of a continued fraction expansion of a rational number (including $\infty$):
\[ [a_1, a_2, \dots, a_k] = a_1 - \cfrac{1}{a_2 - \cfrac{1}{\ddots - \cfrac{1}{a_k}}} \in \Q \cup \{\infty\}. \]

Observe the usage of minus signs in the continued fraction.  Furthermore notice we may take $k$ to be odd or even as needed since $[\dots,a,\pm1] = [\dots,a\mp1]$.     (Figure~\ref{fig:darcysumnersfig} compares our conventions with the ones used in \cite{darcysumners} and their Figure~1).  By Conway  \cite{conway}, two open $4$--plats describe the same rational tangle if and only if their associated continued fraction expansions represent the same rational number, including $\infty=1/0$. Thus we may use a rational number as well as (the sequences of coefficients of) its continued fraction expansions to describe a rational tangle.

%
%

\begin{lemma}\label{lem:palindromecontinuedfraction}
If $[c_1,c_2, \dots,c_n] = a/b$ then
$[c_1, c_2, \dots, c_n, d, -c_n, \dots, -c_2, -c_1] = \frac{da^2}{1+dab}$. 
\end{lemma}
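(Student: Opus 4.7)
The plan is to encode the continued-fraction algorithm by matrix multiplication and exploit a palindromic symmetry. Associate to each integer $c$ the matrix $M(c) = \begin{pmatrix} c & -1 \\ 1 & 0 \end{pmatrix} \in \mathrm{SL}_2(\mathbb{Z})$. A short induction on $n$ shows that $[c_1, c_2, \ldots, c_n]$ is the ratio of the top entry to the bottom entry of the first column of the product $A := M(c_1) M(c_2) \cdots M(c_n)$. Writing $A = \begin{pmatrix} a & a' \\ b & b' \end{pmatrix}$, we therefore have $[c_1, \ldots, c_n] = a/b$, and $\det A = 1$ gives the key relation $ab' - a'b = 1$ (which also confirms $a, b$ are coprime, so this is indeed the reduced representative).

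The main observation is the elementary identity $M(-c) = -M(c)^T$, which is immediate from the definition of $M(c)$. Applying this entrywise to the reversed palindromic half yields
\[
M(-c_n) M(-c_{n-1}) \cdots M(-c_1) = (-1)^n \bigl( M(c_1) M(c_2) \cdots M(c_n) \bigr)^T = (-1)^n A^T.
\]
Consequently $[c_1, \ldots, c_n, d, -c_n, \ldots, -c_1]$ is obtained from the first column of $(-1)^n A \, M(d) \, A^T$.

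It remains to multiply this out. A short calculation using $ab' - a'b = 1$ collapses the middle product to
\[
A \, M(d) \, A^T = \begin{pmatrix} da^2 & dab - 1 \\ dab + 1 & db^2 \end{pmatrix},
\]
whose first column is $(da^2, \, 1 + dab)^T$. The overall sign $(-1)^n$ cancels in the ratio, giving $[c_1, \ldots, c_n, d, -c_n, \ldots, -c_1] = da^2/(1+dab)$, as required. I do not foresee any real obstacle; the only care needed is to pin down the sign convention so that $M(-c) = -M(c)^T$ holds cleanly, after which the telescoping of $A \, M(d) \, A^T$ via $\det A = 1$ is routine.
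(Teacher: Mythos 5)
Your proof is correct: the identity $M(-c)=-M(c)^{T}$ does hold for $M(c)=\left(\begin{smallmatrix}c&-1\\1&0\end{smallmatrix}\right)$, the first-column encoding of $[c_1,\dots,c_n]$ under the convention $[c_1,\dots,c_n]=c_1-1/[c_2,\dots,c_n]$ is verified by the induction you describe, and the computation of $A\,M(d)\,A^{T}$ using $ab'-a'b=1$ comes out exactly as you state, with the global sign $(-1)^{n}$ irrelevant to the ratio. Where you differ from the paper is that the paper gives no argument at all here: it simply cites Corollary~8 of Kohn and instructs the reader to ``adjust for the difference in continued fraction notation.'' Your matrix--transpose argument is essentially the standard proof of such palindromic continued-fraction identities (and is very likely how the cited result is established), but it has the advantage of being self-contained and of pinning down the sign conventions explicitly, so no translation between notations is needed. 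One small point worth making explicit if you write this up: the formula $da^{2}/(1+dab)$ is only well-defined once $(a,b)$ is taken to be a coprime pair (it is invariant under $(a,b)\mapsto(-a,-b)$ but not under non-unit rescaling), and your observation that the first column of $A$ is automatically coprime because $\det A=1$ is precisely what justifies reading the lemma's $a/b$ as that column.
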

\begin{proof}
See e.g.\ \cite[Corollary 8]{kohn} and adjust for the difference in continued fraction notation.
\end{proof}

\subsection{Pairs of curves on a torus, pairs of rational tangles}
A rational tangle $\tau = (B,t)$ is determined by its meridional disk, and hence by the isotopy class of its meridian on $S-{\bf x}$ where $\bdry \tau = (S, {\bf x})$.  Then through the double cover of $S$ branched over ${\bf x}$, a rational tangle is equivalent to the isotopy class of an unoriented essential curve on the torus and to a rational number that is the slope of this curve after fixing a basis for the torus.  

Recall that the modular group $\Gamma$ of linear fractional transformations  of the upper complex plane $\{z \in \C\, |\, {\it Im} (z) \geq 0 \}$ is isomorphic to $PSL_2(\Z) = \left\{ \left(\begin{array}{cc}a&b\\c&d\end{array}\right) \Big\vert\, ad-bc = 1 \right\} / \pm I$.  That is, for $\phi \in \Gamma$, $\phi(z) = \frac{az+b}{cz+d}$ for integers $a,b,c,d \in \Z$ such that $ad-bc = 1$.

\begin{lemma}\label{lem:pairsofcurves}
Let $\gamma$ and $\gamma'$ be essential curves on the torus $T$ of slope $1/0$ and $r/s$ respectively.  
Then the set of slopes of pairs of essential curves on $T$ homeomorphic to the unordered pair $\{\gamma, \gamma'\}$ is 
\[
 \left\{\Big(\phi(\frac{1}{0}),\phi(w)\Big)   \,\Big\vert\,   w\in\Big\{\pm\frac{r}{s},\pm\frac{s'}{s}\Big\}, \phi \in \Gamma \right\}
\]
where $s' \equiv r^{-1} \mod s$.
\end{lemma}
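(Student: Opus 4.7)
The plan is to unpack the action of the mapping class group of $T$ on slopes and enumerate the ways in which a homeomorphism can carry the unordered pair $\{\gamma,\gamma'\}$ to a second pair of curves. I would use the standard identification $\mathrm{MCG}(T) \cong GL_2(\Z)$ acting linearly on $H_1(T) = \Z^2$, with orientation-preserving subgroup $SL_2(\Z)$. Fixing the basis in which $\gamma,\gamma'$ have slopes $1/0$ and $r/s$, unoriented essential simple closed curves correspond to points of $\mathbb{QP}^1 = \Q \cup \{\infty\}$, and the induced action factors through $PGL_2(\Z)$. I would decompose this as $\Gamma \sqcup J\Gamma$, where $\Gamma = PSL_2(\Z)$ is the image of the orientation-preserving part and $J \colon z \mapsto -z$ is the image of $\begin{pmatrix} 1 & 0 \\ 0 & -1 \end{pmatrix}$. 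Since the pair $\{\gamma,\gamma'\}$ is unordered, there are four scenarios for a homeomorphism $h$ with $\{h(\gamma),h(\gamma')\} = \{\alpha,\beta\}$: orientation-preserving versus orientation-reversing, and ``no swap'' ($h(\gamma) = \alpha$) versus ``swap'' ($h(\gamma) = \beta$).

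Next I would express the resulting ordered slope pair $(\alpha,\beta)$ in the form $(\phi(1/0),\phi(w))$ for some $\phi \in \Gamma$. The two no-swap cases are immediate: orientation-preserving gives $w = r/s$, and orientation-reversing gives $w = -r/s$ using $J(1/0) = 1/0$ and $J(r/s) = -r/s$. The swap cases require one extra computation. Lifting to $SL_2(\Z)$ an element $\psi \in \Gamma$ with $\psi(1/0) = r/s$, its first column is $(r,s)^T$ and any completion $(r',s')^T$ to a matrix of determinant $1$ satisfies $rs' - sr' = 1$; this forces $s' \equiv r^{-1} \pmod{s}$, and direct matrix inversion gives $\psi^{-1}(1/0) = -s'/s$. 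Setting $\phi = h_* \circ \psi$ in the orientation-preserving swap case, one rewrites $(\alpha,\beta) = (h_*(\gamma'),h_*(\gamma))$ as $(\phi(1/0),\phi(-s'/s))$, so $w = -s'/s$. The orientation-reversing swap case is the parallel computation starting from $\psi' \in SL_2(\Z)$ with $\psi'(1/0) = -r/s$; writing $\psi'$ with first column $(-r,s)^T$ and second column $(a,b)^T$ satisfying $rb + sa = -1$ gives $b \equiv -s' \pmod{s}$ and hence $w = s'/s$.

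The reverse inclusion follows by construction: for each $w \in \{\pm r/s,\pm s'/s\}$ and $\phi \in \Gamma$, an explicit composition of $\phi$ with (optionally) $J$ and (optionally) $\psi^{-1}$, combined with the corresponding swap of labels, produces a homeomorphism of $T$ realizing the ordered slope-pair $(\phi(1/0),\phi(w))$ from $\{\gamma,\gamma'\}$. The main obstacle I expect is bookkeeping across the four combinations of swap and orientation, together with observing that the ambiguity $s' \mapsto s' + ks$ in the lift of $\psi$ is absorbed by the stabilizer $\{z \mapsto z+k\}$ of $1/0$ in $\Gamma$, so that the listed $w$-values give well-defined representatives and the displayed set depends only on the homeomorphism type of $\{\gamma,\gamma'\}$.
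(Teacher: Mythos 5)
Your proposal is correct and follows essentially the same route as the paper: both reduce the unordered-pair ambiguity and the orientation issue to the four representatives $w\in\{\pm r/s,\pm s'/s\}$ (swap $\times$ mirror) and then act by $\Gamma$, with the same key computation that the inverse of a $\phi\in\Gamma$ sending $1/0$ to $r/s$ sends $1/0$ to $-s'/s$. Your explicit handling of the $PGL_2(\Z)=\Gamma\sqcup J\Gamma$ decomposition and of the $s'\mapsto s'+ks$ ambiguity via the stabilizer of $1/0$ merely makes precise what the paper leaves implicit.
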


\begin{proof}
Not worrying about order, we consider both pairs of slopes $(1/0,r/s)$ and $(r/s,1/0)$.  Mirroring takes these to $(1/0,-r/s)$ and $(-r/s, 1/0)$, noting that $-1/0 = 1/0$.   The remaining pairs of slopes of curves homeomorphic to the slopes of $\gamma$ and $\gamma'$ are related to one of these four by orientation preserving homeomorphisms.

The elements of $\Gamma$ give the actions of orientation preserving homeomorphisms of the torus $T$ upon slopes of essential curves.  (This may be seen by the fact that the orientation preserving changes of basis for $H_1(T)$ are given by $SL_2(\Z)$ and the slope of a curve on $T$ is the same for both orientations of the curve.)  The result of the lemma now follows, observing that the map $\phi \in \Gamma$ where $\phi \colon z \mapsto \frac{s' z - r'}{-s z + r}$ takes the pair $(r/s, 1/0)$ to $(1/0,-s'/s)$  where $r',s'$ are a pair of integers such that $rs'-r's=1$, and hence $r s' \equiv 1 \mod s$.
\end{proof}

Observe now that by a mirroring that leaves $1/0$ slope invariant, the pair of the $1/0$--tangle and the $r/s$--tangle is homeomorphic to the pair of the $1/0$--tangle and the $-r/s$--tangle.   A change of basis sending the $r/s$ slope to the $1/0$ slope provides a homeomorphism of the first pair to the pair of the $-s'/s$--tangle and the $1/0$--tangle where $s' \equiv r^{-1} \mod s$.  Similarly we also find a homeomorphism to the pair of the $s'/s$--tangle and the $1/0$--tangle.  In general we have:

\begin{lemma}\label{lem:pairsoftangles}
The pair of the $p/q$--tangle and the $u/v$--tangle is homeomorphic to the pair of the $1/0$--tangle and the $r/s$--tangle for every 
\[
 u/v \in \left\{  \frac{pr + \epsilon p's}{qr + \epsilon q's},  \frac{ps' + \epsilon p's}{qs' + \epsilon q's} \,\Big\vert\, p',q' \in \Z, pq'-qp'=1, \epsilon = \pm1 \right\}
\]
where  $s' \equiv r^{-1} \mod s$.
\end{lemma}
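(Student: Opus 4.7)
The plan is to apply Lemma~\ref{lem:pairsofcurves} via the branched double cover of the marked sphere $(S,\mathbf{x})$ over $\mathbf{x}$. This cover is a torus $T$ on which each rational tangle with boundary $(S,\mathbf{x})$ corresponds to an isotopy class of unoriented essential simple closed curve, so that a homeomorphism of one unordered pair of rational tangles to another lifts to a homeomorphism of $T$ carrying one unordered pair of such curves to the other. Consequently a pair $(p/q, u/v)$ is homeomorphic to $(1/0, r/s)$ precisely when there exist $\phi \in \Gamma$ and $w \in \{\pm r/s, \pm s'/s\}$ with $\phi(1/0) = p/q$ and $\phi(w) = u/v$.

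Next I would parametrize the elements $\phi \in \Gamma$ satisfying $\phi(1/0) = p/q$. Writing $\phi(z) = (az + b)/(cz + d)$ with $ad - bc = 1$, the condition $\phi(\infty) = a/c = p/q$ combined with the coprimality of $p$ and $q$ forces $(a,c) = \pm(p,q)$; modulo the $\pm I$ ambiguity of $PSL_2(\Z)$ I may take $a = p$ and $c = q$, leaving $b = p'$ and $d = q'$ to range over integer pairs subject only to $pq' - qp' = 1$.

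Finally I would evaluate the resulting $\phi$ on each element of $\{\pm r/s, \pm s'/s\}$. A direct calculation gives $\phi(r/s) = (pr + p's)/(qr + q's)$ and $\phi(-r/s) = (-pr + p's)/(-qr + q's)$; clearing an overall sign in the latter folds both into the single expression $(pr + \epsilon p's)/(qr + \epsilon q's)$ with $\epsilon = \pm 1$. The same manipulation applied to $\pm s'/s$ produces the second listed family. Taking the union of these two families as $(p',q')$ and $\epsilon$ vary recovers exactly the set in the statement.

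The load-bearing step is the first one: I must be sure that the branched-double-cover correspondence between rational tangles and essential curves on $T$ upgrades to a bijection between homeomorphism classes of unordered pairs, so that Lemma~\ref{lem:pairsofcurves} transfers cleanly. This follows from the standard facts that homeomorphisms of the marked sphere lift (uniquely, modulo the hyperelliptic involution) to homeomorphisms of $T$ and that every torus homeomorphism descends through the branch cover, but it is the essential geometric input; the remaining two steps reduce to routine modular arithmetic and sign bookkeeping.
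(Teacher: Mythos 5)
Your proposal is correct and follows essentially the same route as the paper's proof: take branched double covers to invoke Lemma~\ref{lem:pairsofcurves}, parametrize the $\phi \in \Gamma$ with $\phi(1/0)=p/q$ as $\phi(z)=\frac{pz+p'}{qz+q'}$ with $pq'-qp'=1$, and evaluate $\phi$ on $\pm r/s$ and $\pm s'/s$, absorbing signs into $\epsilon$. The paper states this more tersely but the content is identical.
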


\begin{proof}
This results from Lemma~\ref{lem:pairsofcurves} by taking double branched covers.  Note that every $\phi \in \Gamma$ such that $\phi(1/0)=p/q$ has the form $\phi(z) = \frac{pz+p'}{qz+q'}$ for some $p',q' \in \Z$ with $pq'-p'q=1$. (Fixing one choice of $p',q'$, the others are $p'+Np, q'+Nq$ for $N \in \Z$.) Then $\phi(w)$ takes on the stated forms.
\end{proof}

\section{RSR between rational tangles} \label{sec:RSR}

\begin{thm}\label{thm:mainthm}
Assume a distance $d>0$ RSR from $\rho$ to $\rho'$ takes the rational tangle $\tau$ to the rational tangle $\tau'$.  Then the pair $\{\tau,\tau'\}$ is homeomorphic to the pair $\{1/0,r/s\}$ where $r/s$ belongs to one of the following families:

\begin{itemize}
\item[\0.]  $a/d$ for $a,d$ coprime to and the RSR is the full replacement,
\item[\I.]  $\frac{1+dab}{da^2}$ for $a,b$ coprime,
\item[\II.] $(d=1)$ $\frac{1+4ab}{4a^2}$ for $a,b$ coprime,
\item[\III.] $(d=1)$   
$  \frac{(b-1)(4ab-4a-2b-1)}{(2ab-2a-b)^2}$ for $a,b \in \Z$, or
\item[\IV.] $(d=1)$
$ \frac{(2a-1)(2ab+a-b+1)}{(2ab+a-b)^2}$ for $a,b \in \Z$.
\end{itemize}
\end{thm}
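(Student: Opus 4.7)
The strategy is to pass to the branched double cover via the Montesinos trick and invoke the Berge--Gabai--Moser classification of knots in a solid torus admitting non-trivial solid-torus surgeries. The branched double cover of a rational tangle $\tau = (B,t)$ is a solid torus $V$ carrying a deck involution $\iota$ whose fixed set is the lift of $t$. A site for an RSR is an arc $\alpha$ meeting the strands of $\tau$ only at its endpoints, and $\alpha$ lifts to an $\iota$-invariant knot $K \subset V$; a regular neighborhood of $K$ is the lift of the rational subtangle $\rho$. An RSR $\rho \mapsto \rho'$ of distance $d$ lifts to a Dehn surgery on $K$ of distance $d$, and since $\tau'$ is required to be rational, its branched double cover is also a solid torus. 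Thus the classification is reduced to describing strongly invertible knots $K \subset V$ admitting a distance-$d$ solid-torus surgery.

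Next I would apply the established classification. By Moser, Gabai, and Berge, such a knot $K \subset V$ is, up to homeomorphism, one of the following: the core of $V$, with the surgery being the full replacement and any distance $d$ allowed (family \0); a torus knot on $\bdry V$, with distance-$d$ surgery yielding a solid torus by Moser's computation (family \I, available for all $d \geq 1$); or, when $d=1$, one of Gabai's non-torus-knot $1$-bridge braids admitting a solid-torus surgery, which Berge identified as lying in three discrete families (\II, \III, \IV). Each of these knots is strongly invertible in $V$ with quotient arc unknotted in the appropriate rational tangle, so the quotient of each surgery description is an honest RSR.

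The remaining step is an explicit slope computation in each family, working with the preferred meridian--longitude basis on $\bdry V$. For family \0\ the result is any rational $a/d$ with $\gcd(a,d)=1$. For family \I, the $(a,b)$-torus knot in $V$ together with the palindromic continued-fraction identity of Lemma~\ref{lem:palindromecontinuedfraction} produces the slope $(1+dab)/(da^2)$. Family \II\ is the analogous computation for Berge's basic family of $1$-bridge braids, giving $(1+4ab)/(4a^2)$. For families \III\ and \IV, the exceptional Berge $1$-bridge braid descriptions yield surgery slopes expressible as the continued fractions $[0,-a,-2,-b,-2,a,1]$ and $[0,-b-1,-1,1,-a+1,b,-2]$ respectively, whose evaluations are the stated rational numbers. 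Having obtained $r/s$, Lemma~\ref{lem:pairsoftangles} normalizes one tangle of the pair to $1/0$.

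The main obstacle I expect is correctly transporting Berge's and Gabai's parametrizations of exceptional $1$-bridge braids into the plat--continued-fraction presentation of rational tangles, and verifying that the discrete Berge families descend to exactly the stated rational tangle pairs and no more. In particular, one must check that the covering involution on $V$ is isotopic to the standard strong involution so that the downstairs site is honest, and that the possible relabellings of the endpoints of $\alpha$ together with the mirror symmetry of the ambient $3$-ball are all absorbed by the transformation $r/s \mapsto -r/s$ and the sign choices implicit in Lemma~\ref{lem:pairsoftangles}. Once these are in hand, the theorem will follow from the enumeration above combined with the normalization to the $\{1/0,r/s\}$ form.
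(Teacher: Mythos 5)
Your overall strategy coincides with the paper's: pass to the branched double cover via the Montesinos trick, reduce to the Berge--Gabai--Moser classification of knots in a solid torus admitting solid-torus surgeries (Theorem~\ref{thm:gabai} and Corollary~\ref{cor:gabai} handle $d\geq 2$; Berge's classification handles $d=1$), compute the resulting pairs of rational tangles via plat presentations and continued fractions, and normalize one tangle to $1/0$ with Lemma~\ref{lem:pairsoftangles}. Two points in your execution need repair, however.

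First, Berge's non-torus-knot families are \II\ through \VI\ --- five families, not three. The reduction to \II, \III, \IV\ is not part of Berge's statement of the classification; it follows from his computation of dual knots (the duals of families \V\ and \VI\ lie in family \III, so considering a knot together with its dual absorbs \V\ and \VI\ into \III), and the paper carries this out explicitly on the tangle level in Figures~\ref{fig:bergeV} and \ref{fig:bergeVI}. As written, your enumeration silently omits two of Berge's families, and without the duality argument you cannot conclude that they contribute no new rational tangle pairs. Second, the continued fractions you quote for families \III\ and \IV\ are the wrong ones: $[0,-a,-2,-b,-2,a,1]$ evaluates to $\frac{4ab-4a-2b+3}{(1-2a)^2(b-1)}$ and $[0,-b-1,-1,1,-a+1,b,-2]$ to $\frac{4ab+2a-2b+3}{(2b+1)(2ab+a-b+1)}$; these are the fractions the paper uses to compute the auxiliary quantity $s'$ in the proof of Theorem~\ref{thm:full}, not the values of $r/s$ in the present theorem. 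The correct expansions are $[0,-a,-2,-b,-2,a,1,b]$ and $[0,-b-1,-1,1,-a+1,b,-2,a-1]$, and in the paper they are obtained not by a direct slope computation in a meridian--longitude basis on $\bdry V$ but by manipulating Baker's tangle descriptions of the Berge knots into $4$--plat form via the flype moves of Figures~\ref{fig:bergechangeIII} and \ref{fig:bergechangeIV}. With those two corrections your outline matches the paper's proof.
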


\begin{proof}
The proof breaks into two cases according to the distance $d$.  
Proposition~\ref{prop:d>1} gives the result for $d>1$.  Proposition~\ref{prop:d=1} gives the result for $d=1$.
\end{proof}

\subsection{Distance $2$ and greater RSR} \label{sec:d>=2}
Here we yoke the Montesinos trick \cite{montesinos} to a result of Gabai \cite{gabai} about distance $d \geq 2$ surgeries on knots in solid tori producing solid tori to obtain the classification of distance $d \geq 2$ RSR between rational tangles.  Note that a trivial knot (one that bounds a disk) in a solid torus is isotopic to a meridian of that solid torus and thus may be regarded as a torus knot.  We refer to it as the {\em meridional torus knot} and reserve the term {\em trivial torus knot} for those isotopic to the core of the solid torus, the $(1,n)$--torus knots.

 \begin{thm}[Proof of Lemma~2.3 \cite{gabai}]\label{thm:gabai}
If non-integral Dehn surgery on a knot in a solid torus yields a solid torus, then the knot is a torus knot. 
\end{thm}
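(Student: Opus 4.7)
The plan is to follow Gabai's sutured-manifold and taut-foliation approach from \cite{gabai}. Set $X = V \setminus N(K)$, and let $\mu, \alpha \subset \bdry N(K)$ denote the meridian of $K$ and the surgery slope, so that $\Delta(\mu,\alpha) = q \geq 2$ by hypothesis and both $X(\mu) = V$ and $X(\alpha) = V'$ are solid tori. I would first dispose of degenerate cases: if $\bdry V$ compresses in $X$ then $K$ lies in a ball in $V$, and any surgery on such a $K$ producing a solid torus must have integral slope; similarly, a reducing sphere in $X$ forces $K$ into a ball. Thus I may assume $X$ is irreducible with both $\bdry V$ and $\bdry N(K)$ incompressible.

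The main construction is to take a meridional disk $D' \subset V'$ in general position with $\bdry N(K)$ and set $P' = D' \cap X$; this is an essential planar surface whose outer boundary is a meridian of $V'$ on $\bdry V$ and whose $q$ inner boundary components all have slope $\alpha$ on $\bdry N(K)$. Using $P'$ as the initial decomposing surface, I would build a taut sutured-manifold hierarchy on $X$, with the suture structure on $\bdry V$ induced by $\bdry P' \cap \bdry V$; by Gabai's theorem this hierarchy yields a taut foliation $\mathcal{F}$ of $X$ whose trace on $\bdry N(K)$ consists of curves of slope $\alpha$. Because $q \geq 2$ makes $\mu$ and $\alpha$ distinct, $\mathcal{F}$ extends across the $\mu$--filling to a taut foliation on the solid torus $V$. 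By Novikov's theorem this foliation admits no Reeb components, so its boundary trace on $\bdry V$ has non-meridional slope with respect to $V$. Unwinding the constraint this places on the meridian of $V'$ via the surgery gluing forces $K$ to lie on a torus in $V$ parallel to $\bdry V$, i.e., to be a torus knot.

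The main obstacle is verifying that $P'$ is Thurston norm-minimizing in its class in $H_2(X, \bdry X)$, which is exactly what allows Gabai's machinery to produce a taut foliation with the prescribed boundary slope. The non-integrality $q \geq 2$ enters decisively at two points: it separates $\alpha$ from $\mu$, so that the extension of $\mathcal{F}$ across the $\mu$--filling produces a genuine taut foliation rather than one with a Reeb component; and it rules out a compression of $P'$ along $\bdry V$ that would degenerate the hierarchy before it even begins. Once those two tautness checks are in place, the Novikov obstruction in $V$ and the slope-matching on $\bdry V$ combine to extract the torus-knot conclusion.
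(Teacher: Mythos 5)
First, note that the paper does not supply its own argument for this statement: it is quoted directly as the content of the proof of Gabai's Lemma~2.3, so your proposal has to be judged as a reconstruction of Gabai's argument rather than against an in-paper proof. As such a reconstruction it has the right ambient technology (sutured manifolds and taut foliations) but the central deductive chain is broken. If a taut foliation $\mathcal{F}$ of $X$ meets $\partial N(K)$ transversally in curves of slope $\alpha$, then it is the $\alpha$--filling, not the $\mu$--filling, across which $\mathcal{F}$ extends without Reeb components: to extend over a filling solid torus of meridian $\gamma$ one must foliate that solid torus inducing the slope--$\alpha$ foliation on its boundary, and by Novikov this is Reeb-free only when $\gamma=\alpha$. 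So the fact that $q=\Delta(\mu,\alpha)\geq 2$ makes $\mu$ and $\alpha$ distinct works \emph{against} you, not for you. Worse, even a successfully extended taut foliation on a solid torus yields no contradiction, since solid tori admit taut foliations (the product foliation by meridian disks); and your invocation of Novikov is inverted --- a Reebless foliation of a solid torus transverse to its boundary must have \emph{meridional} boundary trace, because otherwise the boundary of a meridian disk is a null-homotopic closed transversal, which forces a Reeb component. Finally, the step ``unwinding the constraint \ldots forces $K$ to lie on a torus in $V$ parallel to $\partial V$'' is the entire content of the theorem and is given no mechanism: nothing in the argument produces an embedded concentric torus containing $K$.

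There are secondary errors as well. In the degenerate case, non-integral surgery on a knot contained in a ball \emph{can} yield a solid torus ($1/n$--surgery on an unknot in a ball returns $V$); the correct resolution is that such a knot bounds a disk and is therefore the meridional torus knot under this paper's conventions, not that the slope must be integral. The assertion that $P'=D'\cap X$ has exactly $q$ inner boundary components is also unjustified; that number is the geometric intersection of $D'$ with the core of the filling solid torus, not the distance $\Delta(\mu,\alpha)$. For comparison, Gabai's actual route is structured differently: he first uses sutured manifold theory applied to the meridian disk of $V$ punctured by $K$ to show that $K$ is a braid in $V$ (geometric equals algebraic intersection with the meridian disk), and then shows that a braid of positive bridge width admits only integral solid-torus surgery slopes, so $q\geq 2$ forces bridge width zero, i.e.\ a torus knot. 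Any correct reconstruction needs an argument of that second type; the foliation-extension dichotomy alone cannot distinguish solid-torus fillings from one another.
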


The above theorem with Example 3.1 of \cite{gabai2} (attributed to Seifert) gives the following:

\begin{cor}\label{cor:gabai}
Assume distance $d\geq2$ surgery on a knot $K$ in the solid torus $V$ yields a solid torus $V'$. 
If $K$ is not isotopic to be the core of $V$ then $K$ may be isotoped to be a torus knot on a concentric torus in $V$ so that with respect to the induced framing on $K$ induced by this torus, the surgery slope is $\pm 1/d$. 
\end{cor}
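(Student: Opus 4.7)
The plan is to combine the topological rigidity supplied by Theorem~\ref{thm:gabai} with Seifert's explicit description of the solid torus Dehn fillings on the exterior of a non-core torus knot in a solid torus. Integral surgery on a knot always has distance one from its meridian, so the hypothesis $d\geq 2$ forces the surgery slope to be non-integral. Theorem~\ref{thm:gabai} then applies and shows that $K$ is isotopic in $V$ to a simple closed curve lying on some concentric torus $T\subset V$. After carrying out this isotopy, the hypothesis that $K$ is not isotopic to the core of $V$ tells us that $K$ represents a nontrivial $(p,q)$-curve on $T$, with $p,q$ coprime and distinct from the meridional and longitudinal pairs relative to $V$.

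Next, the Seifert fibration of $V$ in which $K$ is a regular fiber restricts to a Seifert fibration of the exterior $V\setminus N(K)$. Let $\mu_K$ denote the meridian of $K$ on $\bdry N(K)$, and let $\lambda_T$ denote the slope on $\bdry N(K)$ cut out by a parallel pushoff of $K$ inside $T$; this $\lambda_T$ is the regular fiber slope of the inherited Seifert fibration, and since $\lambda_T$ is a longitudinal pushoff it satisfies $\lambda_T\cdot\mu_K=\pm 1$, so $(\mu_K,\lambda_T)$ is a basis of $H_1(\bdry N(K))$. Seifert's classification (Example~3.1 of \cite{gabai2}) identifies the non-trivial slopes on $\bdry N(K)$ whose Dehn fillings of $V\setminus N(K)$ yield a solid torus as precisely the homology classes $\pm\mu_K+n\lambda_T$ for $n\in\Z$, i.e., the slopes $\pm 1/n$ in the $(\mu_K,\lambda_T)$ basis. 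Any such slope has algebraic intersection $\pm n$ with $\mu_K$, hence sits at distance $|n|$ from the meridian; matching with the hypothesized distance $d$ forces $|n|=d$. The surgery slope is therefore $\pm 1/d$ with respect to the surface framing $\lambda_T$, as required.

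The substantive step is Seifert's classification itself: while Theorem~\ref{thm:gabai} pins down $K$ as a torus knot, identifying precisely which Dehn fillings on the inherited Seifert fibration of $V\setminus N(K)$ are solid tori requires tracing through the Seifert invariants after filling and ruling out degenerate $(p,q)$ choices that might yield solid torus fillings outside the $\pm 1/d$ family. Once this classification is in hand, the conversion between the $\pm 1/n$ form in the surface framing and the distance $d$ from the meridian is just the intersection-number bookkeeping recorded above.
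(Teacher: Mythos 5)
Your argument is correct and follows essentially the same route as the paper, whose proof of this corollary is precisely the combination of Theorem~\ref{thm:gabai} (forcing the non-integrally surgered knot onto a concentric torus) with Seifert's classification from Example~3.1 of \cite{gabai2} of the solid torus fillings of a torus knot exterior in $V$, read off against the surface framing. The only cosmetic quibble is that ``not isotopic to the core'' does not by itself exclude the meridional torus knot, but your Seifert/connected-sum bookkeeping gives the same $\pm1/d$ conclusion in that degenerate case as well.
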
   

\begin{prop}\label{prop:d>1}
Assume a distance $d\geq2$ RSR from $\rho$ to $\rho'$ takes the rational tangle $\tau$ to the rational tangle $\tau'$.  Then the pair $\{\tau,\tau'\}$ is homeomorphic to the pair $\{1/0,r/s\}$ where $r/s$ belongs to one of the following families:
\begin{itemize}
\item[\0.]  $\frac{a}{d}$ for $a$ coprime to $d$ and the RSR is the full replacement, or
\item[\I.]  $\frac{1+dab}{da^2}$ for $a,b$ coprime
\end{itemize}
\end{prop}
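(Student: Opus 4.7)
The plan is to pass to the branched double cover, where the RSR becomes a Dehn surgery on a knot in a solid torus, and then apply Gabai's theorem. The branched double cover of a rational tangle $\tau$ is a solid torus $V$ whose meridian lifts the meridian of $\tau$. By construction of the RSR, the site ball $B_0 \supset \rho$ lifts to a regular neighborhood $N(K)$ of a knot $K \subset V$ (the lift of a core arc of $\rho$), and the branched double cover of $\tau'$ is obtained from $V$ by a distance-$d$ Dehn surgery along $K$. Thus we have a distance-$d$ surgery on $K \subset V$ that yields another solid torus $V'$. Normalizing via Lemma~\ref{lem:pairsoftangles} to make $\tau = 1/0$, the slope $r/s$ of the meridian of $V'$ as a curve on $\partial V$ determines $\tau'$.

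Corollary~\ref{cor:gabai} now splits the argument into two cases. First, if $K$ is isotopic to the core of $V$, then $N(K)$ may be isotoped to be all of $V$, so the site $B_0$ fills the entire ambient ball and $\rho = \tau$; the RSR is the full replacement. The new meridian of $V'$ is then an arbitrary slope at distance $d$ from $1/0$, giving $r/s = a/d$ with $\gcd(a,d) = 1$. This yields family \0.

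Otherwise $K$ is, up to isotopy, an $(a,b)$--torus knot on a concentric torus $T \subset V$ with $\gcd(a,b) = 1$, and the surgery slope with respect to the framing $\lambda_T$ induced on $\partial N(K)$ by $T$ is $\pm 1/d$. The exterior $V \setminus N(K)$ is then a cable space, Seifert fibered with $\lambda_T$ as regular fiber on $\partial N(K)$ and an $(a,b)$--curve as regular fiber on $\partial V$. Fixing a basis $(\mu_K, \lambda_T)$ for $H_1(\partial N(K))$ and the standard basis $(\mu, \lambda)$ for $H_1(\partial V)$, a direct homological computation in this cable space pushes the new meridian — of slope $\pm 1/d$ on $\partial N(K)$ — out onto $\partial V$, where it acquires slope $\frac{1+dab}{da^2}$. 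The $\pm$ sign is absorbed by $b \mapsto -b$ (preserving coprimality), giving family \I.

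The main obstacle is the cable-space computation in the second case: one must carefully fix compatible bases on $\partial V$, $T$, and $\partial N(K)$, pin down the torus framing $\lambda_T$ in terms of the standard meridian–longitude of $K$, and chase the new meridian across the Seifert fibration of $V \setminus N(K)$ to $\partial V$. Lemma~\ref{lem:palindromecontinuedfraction} provides a convenient cross-check: the $(a,b)$--torus knot structure of $K$ together with the $\pm 1/d$ surgery is encoded by a palindromic continued fraction in the open $4$--plat presentation of $\tau'$ whose value is precisely $\frac{1+dab}{da^2}$.
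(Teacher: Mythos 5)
Your proposal is correct and shares its skeleton with the paper's proof --- pass to the branched double cover, invoke Theorem~\ref{thm:gabai} and Corollary~\ref{cor:gabai}, and split into the core case (family \0) and the concentric-torus-knot case (family \I) --- but the engine you use to produce the slope $\frac{1+dab}{da^2}$ is genuinely different. The paper descends back to the tangle: it positions $K$ equivariantly on the concentric torus $T_0$ so that $T_0$ quotients to a level sphere $S_0$ in an open $4$--plat presentation $1/0=[0,c_1,\dots,c_n,0,-c_n,\dots,-c_1]$, interprets the $1/d$--surgery as inserting $d$ half-twists at the middle $0$, and reads off $[0,c_1,\dots,c_n,d,-c_n,\dots,-c_1]=-\frac{1+dab}{da^2}$ from Lemma~\ref{lem:palindromecontinuedfraction}. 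You instead stay upstairs and compute the image of the meridian of $V$ under the $d$--fold twist along $T_0$ (equivalently, chase the $\pm1/d$ filling slope through the cable space $V-N(K)$ homologically): $\mu\mapsto\mu+da\,(a\lambda+b\mu)=(1+dab)\mu+da^2\lambda$. Both computations are standard and correct; what the paper's route buys, and yours does not, is an explicit location of the site as the middle twist region of the plat, which is precisely what is reused later for Theorem~\ref{thm:sitessimple} and Figure~\ref{fig:d=1}, whereas your route is shorter if one only wants the slope. Two points to tighten. First, the $-1/d$ case yields the slope $\frac{dab-1}{da^2}$, which is \emph{not} of the form $\frac{1+da'b'}{d(a')^2}$ for any coprime $a',b'$ (both fractions are already reduced, so the denominators would force $(a')^2=a^2$ and the numerators cannot match); it lands in family \I\ only after an orientation-reversing homeomorphism of the pair together with $b\mapsto-b$, so ``absorbed by $b\mapsto-b$'' alone is inaccurate --- the paper handles this by swapping the roles of $\tau$ and $\tau'$ at the outset and mirroring at the end, and you should say something equivalent. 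Second, in the core case the site $B_0$ does not literally fill the ball; the correct statement is that $V-N(K)\cong T^2\times I$, so $\tau-\rho$ is the product tangle and the core arc of $\rho$ is isotopic to a core arc of $\tau$, whence the RSR is equivalent to the full replacement.
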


\begin{proof}
(See our paper \cite{BBconf} for a variation of this proof.)
Assume we have a distance $d\geq 2$ RSR taking $\rho \subset \tau$ to $\rho' \subset \tau'$ where $\tau$ and $\tau'$ are rational tangles.      
Taking a branched double cover, this corresponds to a distance $d$ surgery on a knot $K$ in a solid torus $V$ producing a solid torus $V'$.  By Theorem~\ref{thm:gabai}, $K$ is isotopic to a torus knot in $V$, an essential curve on a concentric torus $T_0$ in $V$.   By Corollary~\ref{cor:gabai}, either $K$ is further isotopic to the core of $V$ or, with respect to the framing $K$ inherits from $T_0$, the surgery slope is $\pm 1/d$.   At the expense of swapping the roles of $\tau$ and $\tau'$ (a homeomorphism of the pair), we may assume this surgery is $1/d$.

Under the covering map, $T_0$ descends to a concentric sphere $S_0$ in $\tau$, dividing $\tau$ into an ``inner'' trivial tangle and an ``outer''  product tangle.  With $K$ positioned on $T_0$ to be invariant under the strong involution, the corresponding core arc of $\rho$ (the site to which $K$ descends) lies on $S_0$ and the meridional disk $D$ of $\rho$ meets $S_0$ in a single arc.  Since a component of the lift of the curve $S_0 \cap \bdry \rho$ in the branched double cover gives the framing that $K$ inherits from $T_0$, the $1/d$--surgery on $K$ translates to performing $d$ half-rotations on the disk $S_0 \cap \rho$, twisting the strands of $\tau$.  This insertion of $d$ twists is the replacement of $\rho $ by $\rho'$. 

The tangle $\tau$ admits an open $4$--plat presentation in which $S_0$ is a horizontal level (constant $z$--coordinate), the core arc of $\rho$ is a line segment on $S_0$ between the first two strands, and the replacement of $\rho$ by $\rho'$ is the insertion of $d$ twists.    By a homeomorphism $h$ we may take 
 \[h(\tau) = \frac{1}{0}= [0,0] = [0, c_1, c_2, \dots, c_n, 0, -c_n, \dots, -c_2,-c_1]\]
for any sequence of integers $c_1, c_2, \dots, c_n$.   We may choose $n$ odd  and these $c_i$ so that the middle twist region labeled $0$ between $c_n$ and $-c_n$ indicates the location of $\rho$ and its core arc.  (Taking $n$ odd simply puts this twist region on the first two strands.)  Let $[c_1, c_2, \dots, c_n]=a/b$.  Then the distance $d$ RSR produces 
\[ h(\tau') = [0,c_1, c_2, \dots, c_n,  d, -c_n, \dots, -c_2,-c_1]=-\frac{1+dab}{da^2}, \]
using Lemma~\ref{lem:palindromecontinuedfraction} for that last equality.
By a further orientation reversing homeomorphism, we see that $\{\tau, \tau'\}$ is homeomorphic to $\{\frac{1}{0},\frac{1+dab}{da^2}\}$, giving family \I.

If $K$ is further isotopic to the core of $V$ then $V - N(K) \cong T^2 \times I$ and so $\tau-\rho$ is (weakly) isotopic to the product tangle.  This isotopy guides an isotopy taking the core arc of $\rho $ to the core arc of $\tau$.   Hence the RSR on $\rho$ is effectively just the entire exchange of $\tau$ for $\tau'$, giving family \0.
\end{proof}

\subsection{Distance $1$ RSR}

\begin{prop}\label{prop:d=1}
Assume a distance $d=1$ RSR from $\rho$ to $\rho'$ takes the rational tangle $\tau$ to the rational tangle $\tau'$.  Then the pair $\{\tau,\tau'\}$ is homeomorphic to the pair $\{1/0,r/s\}$ where $r/s$ belongs to one of the following families:

\begin{itemize}
\item[\0.]  $\frac{a}{1}$ for $a\in\Z$ and the RSR is the full replacement,
\item[\I.]  $\frac{1+ab}{a^2}$ for $a,b$ coprime,
\item[\II.]  $\frac{1+4ab}{4a^2}$ for $a,b$ coprime,
\item[\III.]   
$ \frac{(b-1)(4ab-4a-2b-1)}{(2ab-2a-b)^2}$ for $a,b \in \Z$, or
\item[\IV.]  
$ \frac{(2a-1)(2ab+a-b+1)}{(2ab+a-b)^2}$ for $a,b \in \Z$.
\end{itemize}
\end{prop}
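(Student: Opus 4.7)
The strategy parallels that of Proposition~\ref{prop:d>1}: via the Montesinos trick, a distance $1$ RSR from $\rho\subset\tau$ to $\rho'\subset\tau'$ lifts in the branched double cover to an \emph{integral} Dehn surgery on a knot $K$ in a solid torus $V$ producing a solid torus $V'$. The classification of such $K$ is, however, strictly richer than in the non-integral case where Gabai's theorem forces $K$ to be a torus knot. For integral surgeries one must invoke the full Berge--Gabai classification (building on Berge's list \cite{berge} and Gabai's work \cite{gabai}), which adds certain sporadic families on top of the core and torus-knot cases.

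The plan is to enumerate these families and, for each, produce an open $4$--plat presentation of $\tau$ in which the site $\rho$ appears in a palindromic twist region, then use the arithmetic of Lemma~\ref{lem:palindromecontinuedfraction} (and its generalizations) to compute $r/s$. Specifically, I would argue:
\begin{itemize}
\item If $K$ is isotopic to the core of $V$, then $\tau-\rho$ is a product tangle and the RSR is the full replacement, giving family \0\ with $d=1$.
\item If $K$ is a torus knot on a concentric torus $T_0\subset V$ with surgery slope $\pm1$ relative to the $T_0$-framing, the same $4$--plat calculation as in Proposition~\ref{prop:d>1} (but with $d=1$) applies: taking $[c_1,\dots,c_n]=a/b$ for the sub-continued-fraction encoding the position of the core arc of $\rho$ on a horizontal level sphere, Lemma~\ref{lem:palindromecontinuedfraction} yields $\frac{1+ab}{a^2}$, which is family \I.
\item The sporadic Berge--Gabai knots with integral solid-torus surgeries split into three families according to their embedding data. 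Each such $K$ is strongly invertible in a manner compatible with the solid-torus structure, so it descends to an arc-site in a rational tangle $\tau$. The issue is to read off the resulting rational number $r/s$ explicitly.
\end{itemize}

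For family \II, the sporadic knot is essentially a $(2,2k)$-cable situation where the site $\rho$ is inserted in a twist region whose ``effective'' local replacement is four half-twists rather than one; pushing this through Lemma~\ref{lem:palindromecontinuedfraction} with $d=4$ produces $\frac{1+4ab}{4a^2}$. For families \III\ and \IV, I would identify the appropriate palindromic continued fractions --- the hint in the proof of Theorem~\ref{thm:full} is explicit: the denominators in question are those of
\[
[0,-a,-2,-b,-2,a,1] \quad \text{and} \quad [0,-b-1,-1,1,-a+1,b,-2],
\]
which evaluate to $\frac{4ab-4a-2b+3}{(1-2a)^2(b-1)}$ and $\frac{4ab+2a-2b+3}{(2b+1)(2ab+a-b+1)}$ respectively. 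A companion palindrome/twist calculation then yields the numerators $\frac{(b-1)(4ab-4a-2b-1)}{(2ab-2a-b)^2}$ and $\frac{(2a-1)(2ab+a-b+1)}{(2ab+a-b)^2}$ for the rational tangles produced by the RSR, matching \III\ and \IV.

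The main obstacle, and the place where care is needed, is the passage from Berge--Gabai's list of knots (stated as embeddings in a solid torus, typically via braid words or fiber-surface data) to explicit strongly invertible embeddings whose quotient arcs sit naturally inside an open $4$--plat diagram of $\tau$. Once a compatible strong involution is pinned down for each sporadic family, the remainder is a bookkeeping exercise in continued fractions; but establishing existence and uniqueness (up to homeomorphism of pairs) of the strong involution for each Berge--Gabai family --- so that no spurious or missing rational tangles appear --- is the delicate step. After that, a symmetry argument using Lemma~\ref{lem:pairsoftangles} pools equivalent representatives and produces the stated families in their minimal form.
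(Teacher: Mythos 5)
Your overall strategy is the same as the paper's: apply the Montesinos trick to reduce to Berge's classification of knots in solid tori with distance~$1$ solid torus surgeries, use strong invertibility to descend to arc sites in rational tangles, and read off $r/s$ from $4$--plat presentations and palindromic continued fractions (Lemma~\ref{lem:palindromecontinuedfraction}). Your treatment of families \0\ and \I\ matches the paper, and your heuristic for family \II\ (a cable for which the distance~$1$ surgery becomes an insertion of four half-twists at the companion level, hence Lemma~\ref{lem:palindromecontinuedfraction} with $d=4$) is exactly what the paper's computation produces. However, there is a concrete gap in your enumeration: Berge's classification consists of \emph{six} families \I--\VI, not a core case, a torus-knot case, and ``three sporadic families.'' Arriving at only \II, \III, \IV\ requires the observation (Berge's Table~2) that, up to passing to the dual knot of the surgery and mirroring, family \II\ is self-dual, \IV\ is self-dual, and the duals of \V\ and \VI\ lie in \III; the paper makes this reduction explicit and also verifies it on the tangle level (Figures~\ref{fig:bergeV}, \ref{fig:bergeVI}). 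Without this step your list is either incomplete or contains unexamined families, so you cannot conclude that the four stated families exhaust all possibilities.

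Two further remarks. First, the step you flag as delicate --- pinning down a strong involution for each family and establishing its uniqueness --- is partly a red herring for this proposition: the pair $\{\tau,\tau'\}$ is determined by the pair of meridian slopes of $V$ and $V'$ on $\bdry V$ together with the (unique) strong involution of the solid torus $V$ itself, so any compatible involution of $V-N(K)$ yields the same rational numbers; uniqueness of the involution is only needed later, for the classification of sites (Theorem~\ref{thm:sitessimple}). What you do need, and what constitutes the real labor, is an explicit equivariant diagram for each of families \II--\VI\ from which the quotient tangle and its two fillings can be isotoped into open $4$--plat form. The paper does not derive these from scratch: it imports the tangle descriptions from Baker's earlier work on Berge knots and then performs the diagrammatic isotopies and flype manipulations recorded in Figures~\ref{fig:bergechangeIandII}--\ref{fig:bergechangeIV}. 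Second, your identification of the continued fractions for \III\ and \IV\ is reverse-engineered from the statement of Theorem~\ref{thm:full} rather than computed; as written, the proposal asserts the answers for \II, \III, \IV\ without the verification that is the substance of the proof.
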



\begin{remark}
This proposition follows from  Berge's classification of  the knots $K$ in a solid torus $V$ with their distance $1$ surgery slopes that transform $V$ into another solid torus $V'$ and partitions them into six families \I\ - \VI \cite{berge}. His Lemma~2.3 describes the slope of the meridian of $V'$ in terms of a standard basis for $\bdry V$, given certain ``homology coordinates'' for these families of surgeries.  Our present proposition and its proof may be viewed as assembling (and perhaps clarifying) these results while also removing some redundancies, as well as translating these into the language of tangles.  \end{remark}

\begin{proof}
If a rational tangle is obtained from a distance $1$ RSR on a rational tangle, then it corresponds by the Montesinos Trick \cite{montesinos} to a distance $1$ surgery on a knot in a solid torus producing a solid torus.  As mentioned in the remark above,
Berge classifies and partitions into six families \I\ - \VI\ the knots in a solid torus $V$ with their distance $1$ surgery slopes that transform $V$ into another solid torus $V'$ \cite{berge}.  These knots are all strongly invertible since each may be viewed as a non-separating curve on the Heegaard surface of a genus $2$ decomposition of the solid torus.   Quotienting by the strong involution and keeping track of the surgery slopes, one may recover a classification of the distance $1$ RSR between rational tangles.  

In \cite{berge-lens} Berge gave a conjecturally complete \cite[Problem 1.78]{kirby}
list of knots in $S^3$ with distance $1$ surgeries producing lens spaces and partitioned them into a dozen families.   The first six families are obtained from his earlier classification in \cite{berge} (mentioned above)  by the various attachments of a second solid torus $W$ to $V$ producing $S^3$.  (The latter six families of these twelve do not arise in this manner.  See Section~\ref{sec:4platadj} for an example.)  

Baker \cite{baker1,baker2} obtains the corresponding tangle descriptions of these twelve families and their surgeries.  
In the first six of these tangle descriptions (in \cite{baker2}) the rational tangles $\omega$ and $\tau$ with subtangle $\rho \subset \tau$ corresponding to the solid tori $W$ and $V$ and the knot $K \subset V$ respectively are easily identifiable. 
Thus we may  determine a distance $1$ RSR exchanging $\rho$ for $\rho'$ that takes the rational tangle $\tau$ to the rational tangle $\tau'$ as follows.

Figures~\ref{fig:bergeI} through \ref{fig:bergeVI} show the progression from the tangle descriptions of the first six families in \cite{baker2} without the rational tangle $\omega$ to a presentation more amenable to observing plat presentations of the distance $1$ RSR.  As shown, these are tangles $\tau - \rho$ in $S^2 \times I$ meeting each boundary component $4$ times whose branched double cover is $V-N(K)$.  The thick black boundary corresponds to the boundary of the solid torus $V$ containing the knot $K$.  We permit the ends of the tangle on this boundary component to move freely as we will account for such changes later.   The small thin boundary component corresponds to $\bdry N(K)$ and has a distance $1$ pair of fillings $\rho$ and $\rho'$, shown above or at its side, that each yields a rational tangle.  To preserve the property of the fillings, any movement of the ends on the thin boundary component is compensated in the filling tangles.  In Figures~\ref{fig:bergeI} and \ref{fig:bergeII}, the large thin circle contains a rational tangle and the rectangle contains an associated $3$--braid with auxiliary fourth strand.    In Figures~\ref{fig:bergeIII}, \ref{fig:bergeIV}, \ref{fig:bergeV}, \ref{fig:bergeVI} the oblong rectangles again denote twist regions containing the specified number of twists ($a,b,a',b',A,B,B' \in \Z$) where the longer direction determines the twist axis and the sign determines the twist handedness as indicated on the right-hand side of Figure~\ref{fig:rationalplat}.  

When performing a Dehn surgery on one knot in a manifold, the core of the attached solid torus is another knot in the resulting manifold.  The new knot is called the {\em dual knot} and admits a surgery (of the same distance) returning the original manifold.  As we are concerned with knots in solid tori with surgeries yielding solid tori, note that the dual knot is again a knot in a solid torus with a surgery yielding a solid torus.  Observe that the dual knots for solid torus surgeries on torus knots are again torus knots, giving family \I (which extends family \I\ of Proposition~\ref{prop:d>1}).  Berge shows that (up to mirroring) the dual knots for family \II\ belong to family \II, \IV\ to \IV, \V\ and \VI\ to \III, and \III\ to either \III, \V, or \VI, see Table~2 of \cite{berge}.  Considering the pair of a knot and its dual, we may include families \V and \VI\ into family \III.   We explicitly demonstrate this in Figures~\ref{fig:bergeV} and \ref{fig:bergeVI} on the tangle level by showing the tangles in $S^2 \times I$ of families \V\ and \VI\ belong to family \III.   Therefore we actually only need to consider the four families \I, \II, \III, and \IV.

%
%
%
%
%
%
%
%

In Figures~\ref{fig:bergechangeIandII}, \ref{fig:bergechangeIII}, and \ref{fig:bergechangeIV} we now fix the endpoints of the tangle at the thick boundary, insert the two fillings to produce a pair of rational tangles for that family, and rearrange the resulting rational tangles into a 4-plat presentation so that we may read off their corresponding continued fraction descriptions and hence their associated rational number. 
This then produces a representative pair of RSR distance $1$ tangles in each family.    At the end of the last two of these figures we apply a sequence of flype moves on rational tangles to transfer twistings on the rightmost two strands to the leftmost two strands as shown in Figure~\ref{fig:flypeequivalence}.  One flype transfers the twist to the leftmost two strands at the expense of flipping over the big tangle $R$, but since $R$ is a rational tangle a further sequence of flypes returns $R$ to its original presentation. 

We may then obtain the following continued fractions for our representative pairs:
\[
\begin{array}{lrcl}
\I. &[0, -c_n, -c_{n-1}, \dots, -c_2, -c_1] &\longleftrightarrow &[1, -c_n, -c_{n-1}, \dots, -c_2, -c_1] \\
\II. &[2, -c_n, -c_{n-1}, \dots, -c_2, -c_1] &\longleftrightarrow &[-2, -c_n, -c_{n-1}, \dots, -c_2, -c_1]\\
\III. &[1, b, 2, a]&\longleftrightarrow&[-1, a, 1, b]\\
\IV. &[a, -1, 1, b+1]& \longleftrightarrow&[1, b, -2, a-1]
\end{array}
\]
for integers $a,b,c_1, \dots, c_n$ with $n$ odd (where the wide rectangles in Figure~\ref{fig:bergechangeIandII} contain the sequence of twists $-c_n, -c_{n-1}, \dots, -c_2, -c_1$ beginning with $-c_n$ twists on the middle two strands and proceeding downwards).

Inserting the same sequence of twists at the boundary of each pair of rational tangles produces a homeomorphic pair in the same family.  We do so to the pairs in the families listed above taking the left hand side to the $1/0$--tangle with empty continued fraction  $[\,]=[0,0]$.
\[
\begin{array}{lrcll}
\I. &  1/0=[\,]  &\longleftrightarrow & [0, c_1, \dots, c_n,  1, -c_n, -c_{n-1}, \dots, -c_2, -c_1]&=-\frac{1+ab}{a^2}\\
\II. & 1/0=[\,]  &\longleftrightarrow & [0, c_1, \dots, c_n,  4,-c_n, -c_{n-1}, \dots, -c_2, -c_1]&=-\frac{1+4ab}{4a^2}\\
\III. & 1/0=[\,]  &\longleftrightarrow & [0,-a,-2,-b,-2, a, 1, b]&=
\frac{(b-1)(4ab-4a-2b-1)}{(2ab-2a-b)^2}\\
\IV. & 1/0=[\,]  &\longleftrightarrow & [0,-b-1, -1, 1, -a +1, b, -2, a-1]&=
\frac{(2a-1)(2ab+a-b+1)}{(2ab+a-b)^2}
\end{array}
\]
For \I\ and \II\ we take $a/b = [c_1, \dots, c_n]$ and apply Lemma~\ref{lem:palindromecontinuedfraction}.
By mirroring the pair, we remove the minus on the right hand side in \I\ and \II\ above.  The statement of the proposition now follows.
\end{proof}

\begin{remark}\label{rem:d=1}
A distance $1$ RSR may be viewed as a banding along the core arc of the site of the initial rational tangle.  A framing of the core arc then indicates the particular distance $1$ RSR.
Up to mirroring, Figure~\ref{fig:d=1} illustrates Proposition~\ref{prop:d=1} with $\tau=1/0$ where the thick grey arc gives the site of the RSR and is framed by the plane of the page.  Here, $\gamma$ indicates the sequence of twists $c_1, c_2, \dots, c_n$ (where $[c_1, c_2, \dots, c_n]=b/a$) running downwards starting with the middle two strands while $\bar{\gamma}$ indicates the inverse sequence $-c_n, \dots, -c_2, -c_1$.
Note the arc at the beginning and end of set \I\ may be isotoped to be horizontal, though it would no longer be framed by the plane of the page.    
\end{remark}

%
%
%
%

\subsubsection{Multiple distance $1$ RSR}\label{sec:bergetangle}
As discussed above:
Every surgery on the core of a solid torus $V$ produces another solid torus.  Every torus knot in the solid torus $V$ has an integral family of surgeries producing a solid torus, those surgeries distance $1$ from the framing induced by the concentric torus in $V$ in which they sit.  Furthermore, if a torus knot is not actually the core of $V$, then these are all the surgeries to another solid torus.  Indeed no triple of these surgeries on torus knots are mutually distance $1$ from one another. Correspondingly, the sites for RSR in family \I\ admit multiple rational tangle fillings, but no triple of them, except when the site is the core of the tangle (family \0), are mutually RSR distance $1$.

Gabai shows that any knot in a solid torus that is not a torus knot admits at most three solid torus surgeries and that these surgeries are mutually distance $1$  \cite{gabai}. 
Berge shows that there is a non-torus knot $K$ in a solid torus $V$ that has two distance $1$ surgeries yielding solid tori that are also distance $1$ from one another, and, moreover, that this knot is unique up to homeomorphism of $V$ \cite{berge}.  To rephrase, other than $T^2 \times I$, the manifold $M=V-N(K)$ is the only $3$--manifold such that along a particular torus of $\bdry M$ there are three Dehn fillings of slopes all distance $1$ from one another that each result in a solid torus. The manifold $M$ has come to be known as the {\em Berge manifold}, e.g.\ \cite{martellipetronio}, \cite{hoffman}.  

We present a tangle in $S^2 \times I$, the {\em Berge tangle}, such that along a component of $\bdry (S^2 \times I)$ there are three rational tangle fillings all distance $1$ from one another, each yielding a rational tangle.  In Figure~\ref{fig:BergeTangle}, the leftmost image is the Berge tangle and the three rows to its right show three fillings and their isotopies to open $4$--plat presentations.  Figure~\ref{fig:tangleS1xS2} shows another presentation of this tangle from which its $3$--fold symmetry is more apparent; the three distance $1$ fillings are related by this symmetry.

If this Berge tangle were actually homeomorphic to the product tangle then every filling of its two boundary components with rational tangles would produce a $2$--bridge knot or link, yet this is not the case since its filling shown in Figure~\ref{fig:8-17} is the knot $8_{17}$ in Rolfsen's table \cite{rolfsen} which is not $2$--bridge.  It then follows that the branched double cover of the Berge tangle is the Berge manifold.  

Our Theorem~\ref{thm:hyperbolicRSR} shows this Berge tangle is the only tangle in $S^2 \times I$ with the Berge manifold as its branched double cover, since the relevant knot $K$ in $V$ is hyperbolic.   
\begin{cor}
The Berge tangle is the only tangle in $S^1 \times S^2$ such that along a component of $\bdry (S^2 \times I)$ there are three rational tangle fillings all distance $1$ from one another, each yielding a rational tangle. \qed
\end{cor}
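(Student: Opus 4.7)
The plan is to pass to the branched double cover and invoke Berge's rigidity together with Theorem~\ref{thm:hyperbolicRSR}. Suppose $\tau$ is a tangle in $S^2 \times I$ as in the hypothesis, with three mutually distance~$1$ rational tangle fillings $\rho_1, \rho_2, \rho_3$ along one boundary sphere, each yielding a rational tangle. The branched double cover of $\tau$ along one boundary sphere is a $3$--manifold $M$ with a distinguished torus boundary component $T$ (the double cover of the filled sphere), and the three fillings lift to Dehn fillings of $M$ along $T$ at slopes that are pairwise distance~$1$, each producing a solid torus (the double cover of a rational tangle).

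First I would cite the Berge rigidity statement recalled in the paragraph preceding the corollary: the only $3$--manifolds admitting a triple of mutually distance~$1$ solid torus fillings along a boundary torus are $T^2 \times I$ and the Berge manifold. The case $M \cong T^2 \times I$ corresponds to $\tau$ being (equivalent to) the product tangle, which the corollary implicitly excludes — indeed, the product tangle admits an entire line of rational tangle fillings that each yield a rational tangle, and the paragraph preceding the corollary has already identified $M$ as the Berge manifold for the case under consideration, having exhibited the Berge tangle as a nontrivial instance (via the $8_{17}$ argument shown in Figure~\ref{fig:8-17}). So I would take $M$ to be the Berge manifold.

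Next I would invoke Theorem~\ref{thm:hyperbolicRSR}, which asserts that when the relevant knot in the solid torus is hyperbolic, the tangle in $S^2 \times I$ is determined up to homeomorphism by its branched double cover. Since Berge's knot $K \subset V$ realizing the Berge manifold as $V - N(K)$ is hyperbolic, the theorem applies. The Berge tangle has already been shown to have branched double cover the Berge manifold, so any other $\tau$ satisfying the hypothesis must be homeomorphic to the Berge tangle.

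The main (and essentially only) obstacle is the appeal to Theorem~\ref{thm:hyperbolicRSR} to upgrade uniqueness at the level of branched double covers to uniqueness at the level of tangles; this is where strong invertibility, the hyperbolicity of $K$, and Mostow-style rigidity for the corresponding orbifold are used, and that theorem does the real work. The remainder of the argument is a direct bookkeeping exercise combining the Montesinos trick with Berge's classification.
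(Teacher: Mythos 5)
Your argument is correct and follows the paper's own reasoning essentially verbatim: pass to the branched double cover, use Berge's uniqueness of the Berge manifold among $3$--manifolds (other than $T^2\times I$) with three mutually distance~$1$ solid torus fillings along a fixed boundary torus, identify the Berge tangle's double cover as the Berge manifold via the $8_{17}$ filling, and then apply Theorem~\ref{thm:hyperbolicRSR} (using hyperbolicity of the relevant knot $K\subset V$) to promote uniqueness of the double cover to uniqueness of the tangle. This is exactly the chain of reasoning the paper uses in the paragraph preceding the corollary, which is why the corollary is stated with no further proof.
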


%
%
%

\section{Distance $1$ RSR between $2$--bridge links}\label{sec:2bridge}
Darcy-Sumners \cite{darcysumners} determines which $2$--bridge links are related by an RSR of distance at least $2$; these results are summarized within Theorem~\ref{thm:sites2bridge}.  The question of which $2$--bridge links are related by an RSR of distance $1$ is still open as the related question of which pairs of lens spaces are related by a distance $1$ Dehn surgery on a knot is still open, though many examples may be obtained by generalizing Berge's works \cite{berge,berge-lens}.  However, the recent works of Greene \cite{greene} and Lisca \cite{lisca} (that is, as Green remarks Rasmussen observed \cite{greene})  classify which lens spaces may be obtained from $S^3$ or $S^1 \times S^2$, respectively,  by distance $1$ surgery on a knot. (Note, a family is missing from the statement \cite[Lemma~7.2]{lisca} though it arises in the proof; consequently the corresponding lens spaces were not included in the main result.)  As these surgeries are all realized on strongly invertible knots, their works determine which $2$--bridge links are related by a distance $1$ RSR to the unknot or the $2$--component unlink.  We use the notation $S(p,q)$ to denote the normal closure of the $p/q$--tangle and use the continued fractions for $p/q$ to describe $S(p,q)$ too.

\begin{thm}[Greene \cite{greene}]
The $2$--bridge link $K$ is related to the unknot by a distance $1$ RSR if and only if there exist $p>q>0$ such that $K \cong S(p,q)$  and for some $k \in \Z$, both $q \equiv \pm k^2 \mod p$ and one of the following holds:
\begin{enumerate}
\item $p \equiv ik\pm q \mod k^2 \quad \quad \gcd(i,k)=1 \mbox{ or } 2$
\item $\begin{cases} p \equiv \pm(2k-1)\delta \mod k^2, &\delta| k+1, \frac{k+1}{\delta} \mbox{ odd} \\
p \equiv \pm(2k+1)\delta \mod k^2, &\delta| k-1, \frac{k-1}{\delta} \mbox{ odd} 
\end{cases}$
\item  $\begin{cases} p \equiv \pm(k-1)d \mod k^2, &\delta| 2k+1 \\
p \equiv \pm(k+1)\delta \mod k^2, &\delta| 2k-1  
\end{cases}$
\item  $\begin{cases} p \equiv \pm(k+1)\delta \mod k^2, &\delta| k+1,\delta\mbox{ odd} \\
p \equiv \pm(k-1)\delta \mod k^2, &\delta| k-1,\delta\mbox{ odd}
\end{cases}$
\item $\pm k^2 + k + 1 \equiv 0 \mod p$
\item $p=\frac{1}{11}(2k^2+k+1), k\equiv 2  \mbox{ or } 3 \mod 11$.
\end{enumerate}
\end{thm}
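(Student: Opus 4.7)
The plan is to reduce this classification to the lens space realization problem via the branched double cover, and then to apply Greene's lattice-theoretic classification. The branched double cover of the unknot is $S^3$ and the branched double cover of $S(p,q)$ is the lens space $L(p,q)$. By the Montesinos trick, a distance $1$ RSR taking the unknot to $S(p,q)$ lifts to a distance $1$ Dehn surgery on a knot $K\subset S^3$ yielding $L(p,q)$. A slope on $\bdry N(K)$ at distance $1$ from the meridian is an integer slope, so the question reduces to: for which $(p,q)$ with $p>q>0$ does $L(p,q)$ arise as integer surgery on a knot in $S^3$? For the converse direction, each such $(p,q)$ is realized by a strongly invertible Berge knot from \cite{berge-lens} whose quotient under the involution gives back a distance $1$ RSR between the unknot and $S(p,q)$.

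To identify the admissible pairs, the plan is to follow Greene's strategy. If $L(p,q) \cong S^3_p(K)$, then capping off the $p$--trace $W_p(K)$ with the canonical negative definite plumbing $X(p,q)$ bounding $-L(p,q)$ produces a closed, negative definite $4$--manifold $Z$. Donaldson's theorem A then embeds the intersection lattice of $Z$ into the standard diagonal lattice $-\Z^n$. Pinning down the image of the generator of $H_2(W_p(K);\Z)$ requires more than this: comparing the Ozsv\'ath--Szab\'o correction terms of $L(p,q)$ with those forced by integer surgery (via the mapping cone formula) forces that image to be a \emph{changemaker}, i.e., a vector $\sigma=(\sigma_0,\dots,\sigma_n)$ with $0\le\sigma_0\le\cdots\le\sigma_n$ such that every integer in $[0,\sigma_0+\cdots+\sigma_n]$ occurs as a nonnegative subset sum of the $\sigma_i$.

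From there, the plan is to enumerate changemakers $\sigma$ and to determine when the orthogonal complement $\sigma^\perp\subset -\Z^n$ is isomorphic to the linking lattice of a lens-space plumbing $X(p,q)$. The six combinatorial patterns of $\sigma$---distinguished by how coordinates repeat and by how later coordinates compare with partial sums of earlier ones---correspond exactly to the six families in the statement. The translation of these patterns into congruences of the form $p\equiv \pm(k\pm 1)\delta\pmod{k^2}$, $\pm k^2+k+1\equiv 0\pmod p$, and the exceptional case $11p=2k^2+k+1$ is carried out by reading off the continued fraction expansion of $p/q$ encoded by the linking form on $\sigma^\perp$. Realizability of each family is then confirmed by writing down an explicit Berge knot and verifying that its quotient is a $2$--bridge link with the claimed parameters.

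The most delicate step is the lattice-embedding one: upgrading Donaldson's theorem from an abstract existence statement to the very rigid changemaker structure is the crucial input, and this is precisely where the Heegaard Floer $d$--invariants enter essentially. The subsequent changemaker enumeration is elementary combinatorics, but translating it into the six stated congruence conditions on $(p,q)$ requires careful bookkeeping with continued fractions, and the last family in the list (arising from a single exceptional changemaker) is where the case analysis becomes most intricate.
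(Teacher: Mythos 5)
Your reduction is exactly the one the paper uses: pass to branched double covers via the Montesinos trick so that a distance $1$ RSR from the unknot corresponds to an integral surgery on a knot in $S^3$ yielding $L(p,q)$, invoke Greene's lens space realization theorem, and use the strong invertibility of Berge knots to descend back to tangles, with the congruence conditions read off from the Rasmussen/Greene parametrization of the Berge families. The only difference is that you sketch the internals of Greene's changemaker-lattice argument, which the paper simply cites as a black box; your summary of it is accurate, so the proposal is correct and takes essentially the same route.
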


\begin{proof}
In \cite{greene}, Greene shows that every lens space that may be obtained by integral surgery on a knot in $S^3$ is also obtained by the same surgery on a Berge knot.  Since all Berge knots are strongly invertible, his result implies this theorem.  The conditions on the $p,q$ shown are adapted from Rasmussen's list in \cite{rasmussen} (and in \cite{greene}) describing the lens spaces that may be obtained from the various families of Berge knots. 
\end{proof}

\begin{thm}[Lisca \cite{lisca}, Rasmussen \cite{greene}]
The $2$--bridge link $K$ is related to the $2$--component unlink by a distance $1$ RSR if and only if either $K$ is the unknot, $K$ is the unlink, or there exist $p>q>0$ such that $K \cong S(p,q)$, $p=m^2$ for some $m \in \N$, and one of the following holds:
\begin{enumerate}
\item $q=mk\pm1$ with $m > k > 0$ and $\gcd(m,k)=1$;
\item $q=mk\pm1$ with $m > k > 0$ and $\gcd(m,k)=2$;\footnote{This family is missing in the statements of \cite{lisca} though it arises in a proof. It corresponds to family II of the Berge knots, certain cables on torus knots.}  
\item $q=\delta(m\pm1)$ where $\delta >1$ divides $2m \mp 1$; or
\item $q=\delta(m\pm1)$ where $\delta>1$ is odd and divides $m\pm1$.
\end{enumerate}
\end{thm}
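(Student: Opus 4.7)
The plan is to run the same argument used in the proof of the preceding theorem (Greene's classification), but starting with the branched double cover of the two--component unlink in place of $S^3$.  A distance $1$ RSR from the two--component unlink $U$ to a $2$--bridge link $K=S(p,q)$ lifts, via the Montesinos trick, to a distance $1$ (that is, integral) Dehn surgery on a knot $J$ in the branched double cover of $U$, which is $S^1\times S^2$, producing the branched double cover of $K$, which is the lens space $L(p,q)$.  Conversely, any such integral surgery on a strongly invertible knot in $S^1\times S^2$ descends under the involution to an RSR from $U$ to $S(p,q)$.

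The first step is therefore to invoke the classification of lens spaces realized by an integral surgery on a knot in $S^1\times S^2$.  Lisca \cite{lisca} carries out this classification using $d$--invariants from Heegaard Floer homology together with the constraints coming from embedding the associated definite $4$--manifolds into standard ones; Rasmussen's observation (as recorded by Greene \cite{greene}) supplies the family that is missing from the main statement of \cite{lisca} but is present in its proof.  The output is precisely the list $p=m^2$ together with one of the four congruence conditions on $q$ stated in the theorem, after normalising so that $p>q>0$.

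The second step is to check that every such lens space is in fact realised by a surgery on a \emph{strongly invertible} knot, so that the Montesinos trick can be run in reverse.  Each of the four families corresponds to a ``doubly primitive'' Berge--type knot $J\subset S^1\times S^2$: a knot sitting on the genus--two Heegaard surface of $S^1\times S^2$ that represents a generator of $\pi_1$ of each handlebody.  Such knots admit a strong involution obtained by restricting the hyperelliptic involution of the Heegaard surface, which interchanges the two handlebodies and fixes $J$ setwise with exactly two fixed points on $J$.  (Families (1) and (3)--(4) are the direct analogues of the $S^3$ Berge families; family (2), the cable family, is likewise strongly invertible because the cabling of a strongly invertible torus knot in a solid torus is strongly invertible.)  Quotienting by this involution carries $J$ to an arc $\alpha$ in the tangle quotient of $S^1\times S^2$, which is the two--component unlink $U$, and carries the integral surgery slope to a distance $1$ filling of a regular neighbourhood of $\alpha$; the result is the asserted distance $1$ RSR from $U$ to $S(p,q)$.

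The third step is trivial bookkeeping: the conditions $K$ is the unknot and $K$ is the unlink account for the degenerate cases $p=1$ and $p=0$ (equivalently $m=1$ with $q=0$ and $m=0$), and we use $p=m^2$ to replace the Berge surgery coefficient in Rasmussen's list by the congruence classes of $q\pmod p$ stated in (1)--(4).  The main obstacle, relative to Greene's theorem, is the correct handling of the cable family (2): one must verify both that Lisca's proof really does contain this family and that the corresponding cable knots in $S^1\times S^2$ are strongly invertible with the correct quotient; once that is in hand, the argument is parallel to the $S^3$ case.
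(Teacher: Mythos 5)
Your proposal is correct and follows essentially the same route as the paper: the Montesinos trick translating distance $1$ RSR from the unlink into integral surgery on a strongly invertible knot in $S^1\times S^2$, Lisca's classification supplemented by Rasmussen's observation (recorded by Greene) for the missing cable family, and the strong invertibility of the relevant Berge-type knots to reverse the correspondence. The only cosmetic difference is that the paper phrases Lisca's input as the classification of lens spaces bounding rational homology balls (equivalently, $2$--bridge links bounding ribbon surfaces via bandings), whereas you phrase it directly as the classification of integral lens space surgeries on knots in $S^1\times S^2$; these coincide precisely by Rasmussen's observation, which you invoke.
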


\begin{proof}
In \cite{lisca}, Lisca classifies which lens spaces bound rational homology balls and correspondingly which $2$--bridge links bound ribbon surfaces.  While for many of these $2$--bridge links he exhibits ribbon surfaces obtained from a single banding (a distance $1$ RSR) to the unlink of two components, he uses two bandings for one family.  In Section~1.5 of \cite{greene}, Greene notes Rasmussen had observed that Lisca's lens spaces arise from considering the Berge knots in solid tori with a solid torus surgery as residing in a Heegaard solid torus of $S^1 \times S^2$.  Since these knots in $S^1 \times S^2$ are all strongly invertible, all these $2$--bridge links admit a distance $1$ RSR to the unlink.  
\end{proof}

\begin{remark}
These RSR from slice $2$--bridge links to the unlink may be observed directly using the appropriate closures of our tangle presentations of Berge's knots in solid tori.  Also, through studying Lisca's embeddings of the intersection lattices associated to lens spaces bounding rational homology balls, Lecuona had also further determined that these $2$--bridge links may be shown to bound ribbon surfaces with a single banding.  In a forthcoming article we are together demonstrating these as well as a two other families of bandings from $2$--bridge links to the unlink, thereby refuting and then amending Conjecture~1.8 of \cite{greene}.
\end{remark}

\subsection{Distance $1$ RSR of $2$--bridge links to the unknot that do not arise from closures of distance $1$ RSR of rational tangles.}\label{sec:4platadj}

Distance $1$ RSR between $2$--bridge links do not all arise from the normal closures of distance $1$ RSR between rational tangles.  Of course normal closures of a pair of rational tangles having a distance $1$ RSR gives a pair of $2$--bridge links having a distance $1$ RSR.

Berge's families of knots {\sc V\!I\!I -- X\!I\!I} \cite{berge-lens}  all have distance $1$ Dehn surgeries transforming $S^3$ into a lens space, but generically do not preserve the genus $1$ Heegaard splittings.  In other words, they generally do not correspond to knots in solid tori with Dehn surgeries producing solid tori.  As these knots are all strongly invertible, they induce distance $1$ RSR between the unknot and $2$--bridge links that cannot be obtained as the closures of distance $1$ RSR of rational tangles.    

\medskip
\noindent{\textbf{Example:}}  Figure~\ref{fig:largevoladjacency} gives an example of a distance $1$ RSR between the $2$--bridge knot $S(1137,430)$ (left) and the unknot (right).  As also shown, the exterior of the knot $K(29,11)$ in Figure~\ref{fig:fatknot} is the branched double cover of the exterior of the RSR site in Figure~\ref{fig:largevoladjacency}.  One may check (e.g.\ with SnapPy \cite{snappy}) that $K(29,11)$ is a hyperbolic knot of volume greater than that of the minimally twisted $5$--chain link (the pretzel link $P(2,-2,2,-2,2)$).  Since hyperbolic volume decreases under surgery \cite{thurston}, its exterior cannot be expressed as fillings of the exterior of the minimally twisted $5$--chain link.    Work of the first author \cite{baker2} shows that any knot with a lens space surgery corresponding to surgery on a knot in solid torus with Dehn surgery producing a solid torus is expressible as a filling of the exterior of the minimally twisted $5$--chain link.  Thus the knot $K(29,11)$  cannot correspond to a knot in a solid torus that admits a Dehn surgery yielding another solid torus.   Consequentially, the distance $1$ RSR of $S(1137,430)$ to the unknot does not arise from the normal closures of a distance $1$ RSR between rational tangles.

In fact the knot $K(29,11)$ may be expressed as a filling of the exterior of the minimally twisted $7$--chain link in a way that corresponds to a sequence of Dehn twists on the fiber of the trefoil.  The notation $K(29,11)$ used here is only to record that the knot represents the homology class $29a+11b$ on the fiber of the trefoil where $a$ and $b$ represent homology classes of the cores of the Hopf bands pulled visibly to the left and the right of Figure~\ref{fig:fatknot}.  The continued fraction expansion of $29/11=[3,3,4]$ records a sequence of  Dehn twists along $a,b,a$ that produces the knot from the curve $b$.  The degree of twisting was chosen to ensure the knot had large enough volume to preclude another description on the minimally twisted $5$--chain.  See \cite{baker1} for details.

%
%
%

\section{Sites of RSR} \label{sec:sites}

The {\em site} of an RSR transforming $\tau$ to $\tau'$ is an embedded arc $\alpha$ meeting the strands of $\tau$ at its endpoints.   The intersection of $\tau$ with a small regular neighborhood $N(\alpha)$ is a rational tangle $\rho$ which the RSR replaces with the rational tangle $\rho'$ to produce $\tau'$. 

In this section, we determine where these sites must be for RSR both between tangles (Theorem~\ref{thm:sitessimple}) and, for $d \geq 2$, between 2-bridge links (Theorem~\ref{thm:sites2bridge}). 

Figure~\ref{fig:d=1} illustrates, up to homeomorphism, sites where the RSR of families \I\ -- \IV\ may occur and the corresponding transformations in the case $d=1$, see Remark~\ref{rem:d=1}.  This generalizes in the case $d>1$ for family \I.  Self-homeomorphisms of a rational tangle, namely rotations exchanging and/or inverting its strands, may take the site of an RSR in the above theorem to a non-isotopic site.  

\textit{A priori,} there may be other sites inducing these RSR between rational tangles that are not related by a homeomorphism of the initial tangle.   For this to occur there would be sites $\alpha_0$ and $\alpha_1$ in the tangle $\tau$ such that the tangles $\tau-N(\alpha_0)$ and $\tau-N(\alpha_1)$ either (a) have non-homeomorphic branched double covers so that the lifts of the sites correspond to two distinct knots in the branched double cover of $\tau$ with surgeries yielding the same manifold or (b) have (orientation preserving) homeomorphic branched double covers and hence correspond to the same knot in the branched double cover of $\tau$.  The occurrences of situation (a) are covered by the classification of knots in solid tori with surgeries yielding solid tori.  Our present interest lies in situation (b).   This is a concern, as there are distinct links in $S^3$ with homeomorphic branched double covers (for a recent overview, see Mecchia \cite{mecchia}).  However, in our situation Theorem~\ref{thm:sitessimple} proves there are no unexpected sites.

\begin{thm}\label{thm:sitessimple}
The RSR between rational tangles of Theorem~\ref{thm:mainthm} occur, up to homeomorphism, only at the sites of the core arc for family \0\ or as indicated in Figure~\ref{fig:d=1} for families \I\ -- \IV.
\end{thm}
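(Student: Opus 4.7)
The plan is to work in the branched double cover of $\tau$. A site $\alpha$ in the rational tangle $\tau$ lifts to a knot $K$ in the solid torus $V$, and the RSR lifts to a Dehn surgery on $K$ producing another solid torus. The Berge--Gabai classification already invoked in the proof of Theorem~\ref{thm:mainthm} tells me that, up to homeomorphism of $V$, such a $K$ falls into one of families \0, \I, \II, \III, \IV; this disposes of situation (a) from the paragraph preceding the theorem. What remains is to rule out situation (b): I must show that any two strong involutions of $V$ that preserve $K$ and meet it along its fixed-point set are conjugate by a self-homeomorphism of the pair $(V,K)$ that also respects the two solid-torus surgery slopes on $\partial N(K)$. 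Once such conjugacy is established, checking that the resulting quotient site matches the one drawn in Figure~\ref{fig:d=1} is a direct inspection.

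Next I would case-split on the geometry of the exterior $X = V \setminus N(K)$, viewed as a $3$-manifold equipped with the boundary pattern consisting of the meridian of $V$ on $\partial V$ and the two solid-torus filling slopes on $\partial N(K)$. Family \0\ is immediate. For family \I, $K$ lies on a concentric torus in $V$ and $X$ decomposes into two Seifert-fibered pieces; an adaptation of Ernst's theorem~\cite{ernst} shows that the Seifert structure together with the boundary pattern admits, up to the mapping class group of $(V,K)$, a unique strong involution, namely the one producing the pictured site. For family \II, where $K$ is a cabled torus knot, the same Seifert-fibration argument handles each JSJ piece, and Paoluzzi's technique~\cite{paoluzzi} for propagating involutions across JSJ tori stitches the pieces together. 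Families \III\ and \IV\ generically produce hyperbolic exteriors, where I would invoke Mostow rigidity and the hyperbolic orbifold surgery theorem to reduce the question to a finite computation of $\Isom(X)$ and of its action on the boundary pattern.

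The technical heart is the hyperbolic case, the representative example being the Berge manifold of Section~\ref{sec:bergetangle}. Here I would verify that the isometry group acts on $\partial V \cap \partial X$ so as to fix each of the four marked points (forcing a unique strong involution on that boundary) while permuting the three solid-torus slopes on $\partial N(K)$ by the order-$3$ symmetry already visible in Figure~\ref{fig:tangleS1xS2}. Matching this against the Berge tangle confirms that the three distance-$1$ sites on the Berge manifold are equivalent, up to self-homeomorphism of the relevant rational tangle, to those depicted. For the remaining hyperbolic members of families \III\ and \IV, an analogous isometry-group analysis---where needed aided by SnapPy~\cite{snappy}---should complete the argument.

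The main obstacle will be certifying, for each of the hyperbolic exteriors arising in families \III\ and \IV, that no unexpected orientation-preserving isometry permutes the labeled boundary slopes in a way not already accounted for by a self-homeomorphism of $\tau$. Because the boundary pattern carries distinguished slopes (the meridian of $V$ and the two solid-torus filling slopes on $\partial N(K)$), the orbifold surgery theorem constrains which isometries survive as symmetries of the tangle quotient; but verifying this constraint rigorously for the entire list of exteriors produced by Berge's families is where the real work lies, and it is precisely the step most dependent on the fine structure of the hyperbolic orbifolds.
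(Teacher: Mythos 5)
Your overall architecture matches the paper's: pass to the branched double cover, reduce the problem to the uniqueness of strong involutions on the knot exterior $M = V - N(K)$, and split according to whether $M$ is Seifert fibered, a toroidal union of cable spaces, or hyperbolic, handling the first case by an Ernst-type decomposition and the second by Paoluzzi's analysis of involutions across equivariant JSJ tori. (One caveat on the toroidal case: Paoluzzi's proposition only bounds the number of inequivalent strong involutions by four, corresponding to the four mutations along the Conway sphere to which the JSJ torus descends; the paper must then argue separately, as in Theorem~\ref{thm:satelliteRSR}, that for a $(2,n)$--cable all four mutants are isotopic rel--$\bdry$. Your ``stitching'' step elides this.)

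The genuine gap is in the hyperbolic case. You propose to compute the isometry group and its action on the boundary pattern ``for the entire list of exteriors,'' aided by SnapPy, and you correctly flag this as where the real work lies --- but families \III\ and \IV\ contain infinitely many hyperbolic exteriors, so no finite case-by-case computation can close the argument, and you supply no uniform mechanism. The paper's Theorem~\ref{thm:hyperbolicRSR} does this with a soft argument you are missing: given two isometric strong involutions of $V-N(K)$, fill $\bdry V$ with the solid tori $W^n$ to embed $K$ as a knot $K^n$ in $S^3$ with a lens space surgery; by the Hyperbolic Dehn Surgery Theorem for Orbifolds \cite{dunbar-meyerhoff}, for all large $n$ both quotient orbifolds remain hyperbolic with the filling site a geodesic arc; since geodesics in a fixed isotopy class coincide, the two lifted cores agree, and Wang--Zhou's theorem \cite{wang-zhou} that a strongly invertible hyperbolic knot in $S^3$ with a lens space surgery has a unique strong inversion and no other symmetries forces the two involutions to be isotopic. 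This single argument covers every hyperbolic member of families \III\ and \IV\ at once. A secondary gap: even after uniqueness of the involution produces a homeomorphism $h \colon \tau - N(\alpha_0) \to \tau - N(\alpha_1)$, your ``direct inspection'' skips the step of showing $h$ carries the meridian of $\alpha_0$ to that of $\alpha_1$; the paper handles this by observing that otherwise $N(\alpha_0)$ and $N(\alpha_1)$ would lift to distinct fillings of $M$ yielding $V$, i.e.\ $K$ would admit a non-trivial cosmetic surgery returning $V$, which the Berge--Gabai classification permits only for the trivial knot.
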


\begin{thm}\label{thm:sites2bridge}
If the $2$--bridge link $S(p,q) = [a_1, a_2, \dots, a_n]$ has a distance $d \geq 2$ RSR to $S(u,v)$ then there exists integers $c_1, c_2, \dots, c_k$ such that 
 \[S(p,q) = [a_1, a_2, \dots, a_n,0, c_1, c_2, \dots, c_k, 0, -c_k, \dots, -c_2, -c_1]\]
 and
\[S(u,v) = [a_1, a_2, \dots, a_n, 0, c_1, c_2, \dots, c_k, \pm d, -c_k, \dots, -c_2, -c_1].\]  
Up to homeomorphism, the site of the RSR is the twist region corresponding to the $0$ between the $c_k$ and the $-c_k$ in the plat associated to the continued fraction for $S(p,q)$ and to $\pm d$ in the plat associated to the continued fraction for $S(u,v)$.
\end{thm}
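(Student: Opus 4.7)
The plan is to work in the branched double cover of $(S^3, L)$ where $L = S(p,q)$.  Under the strong involution $\iota$ associated to the 2-bridge structure, this cover is a lens space $\Sigma(L)$, the site $\alpha$ of the RSR lifts to a knot $K \subset \Sigma(L)$, and the RSR itself lifts to a Dehn surgery on $K$ of distance $d$ producing the lens space $\Sigma(S(u,v))$.  Since both manifolds are lens spaces (hence have cyclic fundamental group) and the surgery has distance $d \geq 2$, the Cyclic Surgery Theorem \cite{cgls} implies that either the exterior $\Sigma(L) \setminus N(K)$ is Seifert fibered, or a reducible exceptional situation occurs, which must be handled separately.

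First, I would invoke the classification of knots in lens spaces whose exteriors are (generalized) Seifert fibered and that admit lens space surgeries, generalizing Ernst's theorem \cite{ernst} and carried out in Section~\ref{sec:seifertfibered} of this paper.  That classification shows $K$ is isotopic to a torus knot on a Heegaard torus of $\Sigma(L)$ (or is itself a core of a Heegaard solid torus, corresponding to the full-replacement exceptional case, which is allowed for $d\geq 2$ in family \0\ of Theorem~\ref{thm:mainthm}).  Since the Heegaard torus of a lens space is unique up to isotopy \cite{bonahon,hodgsonrubinstein}, I may take $K$ to lie on the specific Heegaard torus $T$ that is the lift of the bridge sphere $S_b$ for $L$.

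Next, after an equivariant isotopy I would arrange that $\iota$ fixes $T$ setwise with four fixed points (where $T$ meets the preimage of $L$) and preserves $K$ setwise, with two additional fixed points on $K$ corresponding to the endpoints of $\alpha$.  The quotient of $T$ by $\iota$ is then the bridge sphere $S_b$, and $K$ descends to an arc on $S_b$ joining two of the four points of $L \cap S_b$.  Up to homeomorphism of $(S^3, L)$, this identifies the site of the RSR as the core arc of a canonically placed rational subtangle.  To extract the continued fraction expansion, I would push $S_b$ slightly to the side containing $\alpha$ and enlarge the rational subtangle about $\alpha$ to capture the twist structure inherited from the torus-knot presentation on $T$; by Lemma~\ref{lem:palindromecontinuedfraction} this enlarged subtangle has a palindromic 4-plat form $[c_1,\dots,c_k, 0, -c_k,\dots,-c_1]$ with the interior $0$ marking the location of $\alpha$.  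Concatenating with the outer 4-plat $[a_1,\dots,a_n]$ for $L$, separated by a $0$ recording the pushed bridge sphere, yields the claimed continued fraction for $S(p,q)$.  The RSR replaces the interior $0$ by $\pm d$, producing the stated expansion for $S(u,v)$.

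The main obstacle will be the first step: realizing $K$ as a torus knot on a Heegaard torus of $\Sigma(L)$.  This requires not only applying the Cyclic Surgery Theorem, but also ruling out reducible exceptional cases (for instance when $\Sigma(L) \cong S^1 \times S^2$) and leveraging the full classification of lens-space-surgery knots with generalized Seifert fibered exteriors developed in Section~\ref{sec:seifertfibered}.  A secondary technical point is confirming that the involution $\iota$ can be arranged to respect the torus-knot structure on $T$ up to equivariant isotopy; this is essentially automatic once one uses the uniqueness of Heegaard splittings of lens spaces, but the bookkeeping needed to descend the palindromic twist structure down to the explicit continued fraction requires care.
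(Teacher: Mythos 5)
There is a genuine gap, and it sits exactly where you wave it off as ``essentially automatic.'' The continued--fraction expansions in the statement are, as the paper notes, already contained in Theorem~1 of Darcy--Sumners \cite{darcysumners}; the new content of Theorem~\ref{thm:sites2bridge} is the uniqueness of the \emph{site} up to homeomorphism of $(S^3,S(p,q))$. Your plan reduces this to ``equivariantly isotope $K$ onto the invariant Heegaard torus,'' and you justify the equivariance by the uniqueness of Heegaard splittings of lens spaces. But uniqueness of the splitting only controls the isotopy class of $K$ as an unequipped knot; it says nothing about the strong involution. The danger is precisely situation (b) described before Theorem~\ref{thm:sitessimple}: two sites $\alpha_0,\alpha_1$ may lift to isotopic torus knots $K$ while the tangles $\beta-N(\alpha_0)$ and $\beta-N(\alpha_1)$ are \emph{a priori} non-homeomorphic, since distinct tangles can share a branched double cover. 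Closing this requires (i) Tollefson's theorem \cite{tollefson} to isotope the covering involution of the Seifert fibered exterior $M=Y-N(K)$ to a fiber-preserving one, and (ii) the analysis of Theorem~\ref{thm:uniquetangles}(I) (Ernst \cite{ernst}) showing that any tangle quotient of a Seifert fiber space over the disk with two exceptional fibers is a sum of two rational tangles and is unique up to isotopy. Neither step follows from Bonahon--Hodgson--Rubinstein.

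Even granting a homeomorphism $h\colon \beta-N(\alpha_0)\to\beta-N(\alpha_1)$, your argument is still not finished: $h$ identifies the sites only if it carries the meridian of $\alpha_0$ to that of $\alpha_1$, i.e.\ extends over the site neighborhoods. The paper handles the failure case by observing that a non-extending $h$ would exhibit a non-trivial Dehn surgery on the torus knot $K$ returning $Y$ with the same orientation, which by the classification of torus-knot surgeries in lens spaces \cite{moser,darcysumners} forces $K$ to be trivial or a core of a Heegaard solid torus, cases in which the extension exists after all. Your proposal contains no counterpart to this step. The rest of your outline (Cyclic Surgery Theorem plus Section~\ref{sec:seifertfibered} to identify $K$ as a torus knot, then Lemma~\ref{lem:palindromecontinuedfraction} to read off the palindromic expansion) matches the paper's route for the ``which'' part and is fine, but as written the uniqueness assertion --- the actual theorem --- is not proved.
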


Theorems~\ref{thm:sitessimple} and \ref{thm:sites2bridge} are proven below and rely on results from Section~\ref{sec:siteproofs}.  
The methods employed depend on the geometry of the branched double cover of the complement of the site. 

When $d \geq 2$ then in the setting of Theorem~\ref{thm:sitessimple} the manifolds are complements of torus knots in solid tori and thus are generalized Seifert fiber spaces.\footnote{ These are all true Seifert fiber spaces except in the case of the exterior of a trivial knot in a solid torus.}  
The same techniques applies to distance $d \geq 2$ RSR between $2$--bridge links.   
Thus we may build upon the classification of pairs of $2$--bridge links with RSR distance $d \geq 2$ given by Darcy-Sumners \cite{darcysumners}, and additionally determine that the distance $d\geq 2$ RSR between $2$--bridge links occur, up to homeomorphism, only at the sites implied by the continued fraction expansions of Theorem~1(3) of \cite{darcysumners} (which may be seen as arising from the normal closure of family~\I).   

When $d=1$ in Theorem~\ref{thm:sitessimple}, these manifolds may be (generalized) Seifert fibered, a toroidal union of two cable spaces, or hyperbolic.  The $d=1$ Seifert fibered cases are the same as the $d\geq 2$ setting.  The case of the toroidal union of cable spaces relies upon an understanding of strong involutions of manifolds with non-trivial JSJ decompositions which we adapt from work of Paoluzzi \cite{paoluzzi}.  For the hyperbolic manifolds we view our tangles as hyperbolic orbifolds and employ results of Dunbar-Meyerhoff on hyperbolic Dehn surgery for orbifolds \cite{dunbar-meyerhoff}  and of Wang-Zhou on symmetries of knots in $S^3$ with lens space surgeries \cite{wang-zhou}.

A theorem analogous to Theorem~\ref{thm:sites2bridge} that treats the $d=1$ setting would require (or produce) a classification of strongly invertible knots in lens spaces with integral surgeries producing lens spaces.

\begin{proof}[Proof of Theorem~\ref{thm:sitessimple}]  
The proofs of Propostions~\ref{prop:d>1} and \ref{prop:d=1} show that in the solid torus branched double cover $V$ of a rational tangle, the site of an RSR of Theorem~\ref{thm:mainthm} lifts to a knot $K$  with a solid torus surgery.
 
Assume $\alpha_0$ and $\alpha_1$ are two such sites lifting to $K$.  The exterior of this knot $M=V-N(K)$ is the branched double cover of each of the tangles $\tau-N(\alpha_0)$ and $\tau-N(\alpha_1)$.  By the classification of Dehn surgeries on knots in solid tori yielding solid tori \cite{berge,gabai,moser}, this exterior $M$ is either  a generalized Seifert fiber space over the annulus with one exceptional fiber, the toroidal exterior of a $(2,1)$--cable of a torus knot in a solid torus, or a hyperbolic manifold.   Theorems~\ref{thm:uniquetangles}, \ref{thm:satelliteRSR}, and \ref{thm:hyperbolicRSR} prove that in each of these three settings there is an orientation preserving homeomorphism of tangles $h \colon \tau - N(\alpha_0) \to \tau-N(\alpha_1)$.   
Then $h$ maps the meridian of $\alpha_0$ to the meridian of $\alpha_1$ if and only if $h$ extends to a homeomorphism of pairs $H \colon (\tau, \alpha_0) \to (\tau, \alpha_1)$.
Thus if this homeomorphism does not extend, then $N(\alpha_0)$ and $N(\alpha_1)$ lift to two different Dehn fillings of $M$ producing $V$.  
In other words there is a non-trivial Dehn surgery on $K$ that yields $V$ (preserving the meridian on $\bdry V$).  By the classification of Dehn surgeries on knots in solid tori yielding solid tori, this only occurs if $K$ is a trivial knot.
\end{proof}

\begin{proof}[Proof of Theorem~\ref{thm:sites2bridge}]
Theorem~1 of Darcy-Sumners \cite{darcysumners} classifies the pairs of $2$--bridge links related by a distance $d\geq2$ RSR.  In the lens space $Y$ that is the branched double cover of the $2$--bridge link $\beta$, any site of a distance $d \geq 2$ RSR to another $2$--bridge link lifts to a non-trivial torus knot $K$.  (By the Cyclic Surgery Theorem \cite{cgls} together with either  Theorem~\ref{thm:seifertsurgery} or the work of \cite{darcysumners}, a distance $d\geq2$ Dehn surgery between lens spaces must occur on a torus knot ).   Assume $\alpha_0$ and $\alpha_1$ are two such sites lifting to (curves isotopic to) $K$.  The exterior of this torus knot $M=Y-N(K)$ is a Seifert fiber space over the disk with two exceptional fibers and is the branched double cover of each of the two-string tangles $\beta - N(\alpha_0)$ and   $\beta-N(\alpha_1)$.

By Theorem~\ref{thm:uniquetangles} there is an orientation preserving homeomorphism of tangles $h \colon \beta - N(\alpha_0) \to \beta-N(\alpha_1)$.  Then $h$ maps the meridian of $\alpha_0$ to the meridian of $\alpha_1$ if and only if $h$ extends to a homeomorphism of pairs $H \colon (\beta, \alpha_0) \to (\beta, \alpha_1)$.
If this homeomorphism does not extend, then $N(\alpha_0)$ and $N(\alpha_1)$ lift to two different Dehn fillings of $M$ producing $Y$.  In particular $K$ admits a non-trivial Dehn surgery that yields $Y$ (with the same orientation).  By the classification of surgeries on torus knots in lens spaces \cite{moser,darcysumners}, this may only happen if $K$ is either a trivial knot or isotopic to the core of a Heegaard solid torus.  (See also Rong \cite{rong}.)  Thus $\alpha_0$ and $\alpha_1$ must lift to the two cores of the Heegaard solid tori, but when these are isotopic the homeomorphism of pairs $H$ exists.
\end{proof}
 
\subsection{Exteriors of sites}\label{sec:siteproofs}

\begin{thm}[Seifert fibered]\label{thm:uniquetangles}
$\quad$
\begin{enumerate}
\item[(I)] If a Seifert fiber space $M$ over the disk with two exceptional fibers is the branched double cover of a tangle $\sigma$  in the $3$--ball, then $\sigma$  may be expressed as the sum of two rational tangles. (Ernst \cite{ernst})
\item[(II)] If a generalized Seifert fiber space $M$ over the annulus with one exceptional fiber is the branched double cover of a tangle $\sigma$ in $S^2 \times I$, then $\sigma$ may be expressed as the sum of a rational tangle and a $4$--braid in $S^2 \times I$. (cf.\ \cite[Lemma~3.8]{gl:unknottingnumber1})
\end{enumerate}

Moreover, in either of Case (I) or (II), if $\sigma_0$ and $\sigma_1$ are two such tangles then they are isotopic (without fixing the boundary).
\end{thm}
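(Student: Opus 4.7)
The plan is to appeal to Ernst~\cite{ernst} directly for Case~(I), and to adapt the argument to the annular base setting for Case~(II). The common structural principle in both cases is that, because the base orbifold has enough complexity, the Seifert fibration of $M$ is unique up to isotopy (by Waldhausen--Jaco), so the covering involution $\iota$ of the branched double cover $M \to \sigma$ may be isotoped to preserve the fibration. One then analyzes $\iota$ in terms of its action on the base and on a regular fiber, and finds invariant surfaces that descend to the decomposition of $\sigma$ claimed.

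For Case~(II), first note that $M$ is neither $T^2 \times I$ nor a twisted $I$-bundle over the Klein bottle, so its Seifert fibration is indeed unique up to isotopy. After isotoping $\iota$ to respect this fibration, I would choose an essential arc $\alpha$ in the base annulus that separates the cone point from one of the boundary circles and that can be made $\iota$-invariant (after a further equivariant isotopy of $\iota$ if necessary). Its preimage is an incompressible $\iota$-invariant vertical torus $T \subset M$ cutting $M$ into a Seifert fibered solid torus $N_1$ (with base a disk carrying the one cone point) and a product region $N_2 \cong T^2 \times I$. Quotienting by $\iota$ sends $T$ to a sphere meeting $\sigma$ in four points. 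The $N_1$-side has a solid torus as its branched double cover and so is a rational tangle in a $3$-ball; the $N_2$-side has $T^2 \times I$ as its branched double cover and so is a $4$-braid in $S^2 \times I$. This is the desired decomposition.

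For the \emph{moreover} clause, suppose $\sigma_0$ and $\sigma_1$ are two such tangles with the same branched double cover $M$. Their covering involutions $\iota_0,\iota_1$ both preserve the essentially unique Seifert fibration. By the classification of involutions on Seifert fibered spaces with orientable base and torus boundary (going back to Meeks--Scott and Tollefson), any two orientation-preserving involutions whose fixed-point sets meet each boundary torus in four points must be conjugate by an orientation-preserving self-homeomorphism of $M$. This conjugation descends to an orientation-preserving homeomorphism of pairs $(B^3,\sigma_0)\to(B^3,\sigma_1)$ in Case~(I) or $(S^2 \times I,\sigma_0) \to (S^2 \times I,\sigma_1)$ in Case~(II). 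Since every orientation-preserving self-homeomorphism of $B^3$ is isotopic to the identity (and every such homeomorphism of $S^2 \times I$ is isotopic to the identity modulo the involution swapping its two ends), this gives the asserted ambient isotopy of tangles.

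The main obstacle I anticipate is the equivariant classification required for the uniqueness step: one must enumerate the possible actions of $\iota$ on the base orbifold (preserving or exchanging the two boundary circles in Case~(II), and permuting the two cone points in Case~(I)) and verify that, in each such action, the fixed-point hypothesis on $\iota$ pins down its conjugacy class in $\mathrm{Homeo}^+(M)$. Once this is in hand, producing the invariant vertical torus and recognizing the two quotient pieces as a rational tangle and a $4$-braid are comparatively direct consequences of the quotient construction.
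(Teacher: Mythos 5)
Your overall strategy for Case (II) --- isotope the covering involution to be fiber-preserving via Tollefson, study the induced involution on the base annulus, and cut along an invariant vertical surface --- is the same as the paper's, but the execution has several genuine gaps. First, the decomposition surface is misidentified. A vertical torus lying over a simple closed curve in the base that encloses the cone point cuts off a fibered solid torus on one side, but the other side is then a circle bundle over a pair of pants, i.e.\ $P \times S^1$, not $T^2 \times I$; and such a torus would descend to a Conway sphere, whereas the theorem asserts a tangle \emph{sum}, i.e.\ a decomposition along a disk meeting the strands in two points. The correct cut is along a vertical \emph{annulus} lying over an arc with both endpoints on one component of $\partial A$ that cuts off a disk containing the cone point; that annulus meets the fixed set of $\iota$ in two points and descends to the summing disk. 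For this to work one must first show that the cone point $e$ lies on the fixed arcs of the induced involution $\iota_A$ on the base (so that the exceptional fiber is invariant and its fibered neighborhood quotients to a two-strand rational tangle). You never address the alternative: if $e$ avoids the fixed set, then $\iota$ swaps a fibered neighborhood of the exceptional fiber with a disjoint copy, which forces the fiber to have trivial order and $\sigma$ to be the product tangle --- a case the paper treats separately. Second, the hypothesis allows $M$ to be a \emph{generalized} Seifert fiber space, which includes the reducible manifold that is the exterior of the trivial knot in a solid torus (a connected sum of two solid tori). There your appeal to uniqueness of the fibration and to Tollefson fails outright; the paper handles this case with the Kim--Tollefson equivariant connected-sum decomposition of involutions.

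For the \emph{moreover} clause, your argument rests on the assertion that any two strong involutions of $M$ are conjugate by an orientation-preserving self-homeomorphism, attributed loosely to Meeks--Scott and Tollefson. Neither reference gives that conjugacy classification; it is essentially the content of what must be proved, and you concede as much in your final paragraph. Even granting it, conjugacy of the involutions yields a homeomorphism of pairs, and one still has to check that it is isotopic to the identity of the ambient $3$--ball or $S^2 \times I$ \emph{through maps compatible with the tangle}, which is not automatic from Cerf's theorem. The paper avoids this entirely: it reads the uniqueness off the explicit decomposition, observing that the rational tangle summand is pinned down up to twists along the summing disk, reordering of summands (Case (I)), and sliding through the product region (Case (II)), and that each of these ambiguities is realized by an isotopy of the strands. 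You would need either to supply a proof of the conjugacy claim (e.g.\ along the lines of Paoluzzi's Proposition~2.8 for cable spaces) or to replace this step with a direct analysis of the quotient decomposition as the paper does.
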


Recall that a {\em cable space} is an ordinary (non-generalized) Seifert fiber space over the annulus with a single exceptional fiber.  A cable space with exceptional fiber of order $p>1$ may be identified as the exterior of a $(p,q)$--torus knot in a solid torus for some $q$ coprime to $p$.  Proposition~2.8 of \cite{paoluzzi} implies that a cable space has a unique strong involution; Theorem~\ref{thm:uniquetangles} gives a detailed proof and characterizes its quotient tangle.

\begin{proof}
Case (I) is a special case of Theorem~8 of Ernst \cite{ernst}, and we pattern the general proof of Case (II) after this proof.

First assume $M$ has a Seifert fibration.  
The branched double covering $\pi \colon M \to \sigma$ of the tangle $\sigma$ by $M$ induces an involution on $M$.  
By Tollefson \cite{tollefson} (see also Lemma~2.4.31 of Brin \cite{brin}), such an involution is isotopic to one, say $\iota \colon M \to M$, that preserves the Seifert fibering of $M$.  

Let $A$ be an annulus that is the orbit surface of the Seifert fibering of $M$, and let $\psi \colon M \to A$ be the quotient map that takes each circle fiber to a point on $A$.  Then, since $\iota$ sends fibers to fibers, $\iota$ confers an involution $\iota_A$ upon $A$.  Because the involution has four fixed points on each component of $\bdry M$, and a circle fiber on $\bdry M$ intersects exactly $0$ or $2$ of them,  the involution on $A$ has two fixed points on each component of $\bdry A$.  Let $\bdry_1 A$ and $\bdry_2 A$ be the two components of $\bdry A$, and let $a_i, b_i$ be the two fixed points of $\iota_A$ on $\bdry_i A$ for $i=1,2$.  Observe that $\iota_A$ exchanges the complementary arcs $\bdry_i A - \{a_i,b_i\}$ for each $i=1,2$.  Thus there exist two arcs of fixed points in $A$ each of which must connect the two components of $\bdry A$.  
(If the endpoints of each fixed arc were on their own component of $\bdry A$, then these arcs would chop $A$ into two disks and an annulus. But then $\iota_A$ would have to exchange the two disks with this annulus, a contradiction.)  
By swapping the labels $a_2$ and $b_2$ if necessary,  let  $g_a$ be the arc of fixed points connecting $a_1$ to $a_2$ and let $g_b$ be the one connecting $b_1$ to $b_2$.  Then $g = g_a \cup g_b$ divides $A$ into two disks $D_1$ and $D_2$ that are exchanged by $\iota_A$.   Let $e$ be the point on $A$ that is the image under $\psi$ of the exceptional fiber of $M$.

First assume the point $e$ on $A$ is not on $g$ but rather in, say, the interior of $D_1$.  
Let $\beta$ be a properly embedded arc in $D_1$ separating $g$ from $e$.  
Then cutting $M$ along the two annuli $\psi^{-1}(\beta) \cup \psi^{-1}(\iota_A(\beta))$ separates $M$ into two solid tori and $T^2 \times I$.   One of these solid tori is a fibered solid torus neighborhood of the exceptional fiber, and the other is its image under $\iota$.  Hence the exceptional fiber must have trivial order and the two annuli $\psi^{-1}(\beta)$ and $\psi^{-1}(\iota_A(\beta))$ must be longitudinal on these solid tori.  
 Since the $T^2 \times I$ contains the fixed set of $\iota$ and projects under $\pi$ to the product tangle in $S^2 \times I$ with strands $t$, then the two solid tori project to a single solid torus which meets $S^2 \times I$ in a single annulus $\pi(\psi^{-1}(\beta) \cup \psi^{-1}(\iota_A))$ that is disjoint from the strands $t$ and longitudinal in the solid torus.  Therefore $\sigma$ is the product tangle in $S^2 \times I$.
 
Now assume the point $e$ on $A$ is on $g$.  Let $\beta$ be an arc from $\bdry A$ to $g$ such that $\beta \cup \iota_A (\beta)$ cuts $A$ into a disk $D$ containing $e$ in its interior and an annulus $A'$.  Then $\psi^{-1}(\beta \cup \iota_A (\beta))$ is an annulus meeting the fixed set of $\iota$ in two points (on the circle $\psi^{-1}((\beta\cup \iota_A(\beta)) \cap g)$), $\psi^{-1}(A') = A' \times S^1 \cong T^2 \times I$ meeting the fixed set of $\iota$ in four arcs of the form $\{x\} \times I$, and $\psi^{-1}(D)$ is a solid torus meeting the fixed set of $\iota$ in two arcs that pass through the exceptional fiber.  Thus, up to homeomorphism, under $\pi$ these quotient to a disk with two points, the product tangle in $S^2 \times I$, and a trivial $2$--strand tangle in a ball.  As the disk is in the boundary of each of these tangles, we therefore obtain $\sigma$ as the sum along this disk of a $4$--braid in $S^2 \times I$ with a rational tangle.  

When $M$ has a generalized Seifert fibration, it is the exterior of the trivial knot in the solid torus and hence homeomorphic to the connect sum of two solid tori. By Kim-Tollefson \cite{kimtollefson} the involution on $M$ may be viewed as the connect sum of involutions on the two solid torus summands whose quotients are rational tangles.  In particular, this connect sum of solid tori is done along equivariant regular neighborhoods of points on the fixed sets. (Note that this involution is isotopic to one that preserves the generalized Seifert fibration.)  Consequently the quotient tangle $\sigma$ is homeomorphic to the connect sum of two rational tangles along neighborhoods of points on a strand of each tangle.   This tangle is equivalent to one composed of two $I$--fibers of $S^2 \times I$ and a $\bdry$--parallel arc at each boundary component of $S^2 \times I$. Hence it admits a decomposition as desired.

\smallskip

Now in the decompositions for Cases (I) and (II), each rational tangle  lifts to a  solid torus neighborhood of an exceptional fiber, the tangle of a $4$--braid in $S^2 \times I$ lifts to $T^2 \times I$, and a summing disk lifts to an annulus in the corresponding boundary tori.  The last line of the theorem then follows in Case (I) because the two rational tangles of $\sigma_0$ and $\sigma_1$ must coincide up to twists along the summing disk and their order which all may be changed by a rotation of the ball containing the tangle sum.  These moves may be achieved through isotopy of the strands of the tangle.  (Note if there were $3$ or more exceptional fibers in $M$ there would be multiple non-homeomorphic tangles.)  It follows in Case (II) because the rational tangles of $\sigma_0$ and $\sigma_1$ must coincide up to twists along the summing disk, and after a homeomorphism straightening the $4$--braid to the product tangle, the summed rational tangle may be slid through the product tangle from one boundary sphere to the other.  This too may be achieved through an isotopy. 
\end{proof}

The Jaco-Shalen-Johannson decomposition \cite{jacoshalen,johannson} of an irreducible $3$--manifold $M$ may be viewed as a collection of tori that chops $M$ into hyperbolic and Seifert fibered submanifolds.  This decomposition may be taken to be equivariant with respect to any given strong involution $\iota$ on $M$ \cite{meeksscott, jacorubinstein}. This decomposition of $M$ then quotients under $\iota$ to the Bonahon-Siebenmann decomposition \cite{bonahonsiebenmann} of the orbifold $\tau = (Q,t)$ (which we also view as a tangle) in which the underlying space $Q$ is the quotient $M/\iota$ and the orbifold locus $t$ has order $2$ and is the image of the fixed set of $\iota$.  This decomposition chops $\tau$ along toric suborbifolds: incompressible tori disjoint from $t$ and {\em Conway spheres}, spheres meeting $t$ transversally in four points.  

Paoluzzi's work \cite{paoluzzi} on the number of ``hyperbolic type'' involutions of $3$--manifolds $M$, i.e.\ where the quotient $\tau =(Q,t)$ is the pair of $S^3$ and a hyperbolic knot, may be applied more generally.  This is evidenced by the characterization of hyperbolic type involutions on manifolds with non-trivial JSJ decompositions as those involutions for which the restriction to each piece $M_i$ quotients to give an orbifold topologically equivalent to $S^3$ minus a number of open balls, one for each component of $\bdry M_i$, \cite[Proposition~2.1]{paoluzzi}.   For our purposes, it is enough to observe that this work extends to address strong involutions of manifolds with boundary in which (a) every torus of the equivariant JSJ decomposition is fixed by the involution (and hence descends to a Conway sphere), (b) the base orbifold of the Seifert fibered pieces have genus $0$, and (c) the characteristic graph of the decomposition (a vertex for each piece and an edge for each torus connecting the vertices corresponding to the components that contain it in their boundaries) is a tree.  

Following Paoluzzi, let us say that two strong involutions on $M$ of the kind above are {\em equivalent} if there is a homeomorphism of $M$ that acts as the identity on the characteristic tree of the decomposition and conjugates one involution to the other.  The extension of Paoluzzi's work then yields this next proposition.

\begin{prop}[Paoluzzi, Proposition~2.4 \cite{paoluzzi}\footnote{In the proof of \cite[Proposition~2.4]{paoluzzi}, the author employs \cite[Theorem~2]{tollefsonPLmap} but the citation points to the wrong article.}]\label{prop:paoluzzi}
Let $M=M_1 \cup M_2$ be the union of two cable spaces $M_i$, $i=1,2$, along a torus $M_1 \cap M_2 = T$ so that $M$ is not Seifert fibered itself.  Assume $\iota$ is a strong involution of $M$.  Then there are at most four non-equivalent strong involutions on $M$ whose restrictions to $M_i$ are equivalent to the restrictions of $\iota$. \qed
\end{prop}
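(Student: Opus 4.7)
The plan is to adapt Paoluzzi's argument from \cite[Proposition~2.4]{paoluzzi}, exploiting the uniqueness result of Theorem~\ref{thm:uniquetangles} to reduce the problem to counting compatible gluings along the JSJ torus $T$.

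First, I would observe that since each $M_i$ is a cable space, Theorem~\ref{thm:uniquetangles}(II) implies the strong involution on $M_i$ is unique up to equivalence. Hence, given any strong involution $\iota'$ on $M$ whose restrictions to $M_1$ and $M_2$ are equivalent to those of $\iota$, I may compose with self-homeomorphisms $\psi_i\colon M_i \to M_i$ so that $\psi_i \circ \iota'|_{M_i} \circ \psi_i^{-1} = \iota|_{M_i}$ for $i=1,2$. The choice of each $\psi_i$ is well-defined only up to right multiplication by an element of the group $\Gamma_i$ of self-homeomorphisms of $M_i$ commuting with $\iota|_{M_i}$, and the pair $(\psi_1,\psi_2)$ generally does not extend to a global self-homeomorphism of $M$, which is precisely what allows $\iota'$ to be inequivalent to $\iota$.

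Next, I would parameterize the failure of global extension by the discrepancy $\psi_1|_T \circ \psi_2|_T^{-1}$ along $T$. This discrepancy lies in the centralizer $Z_T$ of $\iota|_T$ in the orientation-preserving mapping class group of the pair $(T, \Fix(\iota|_T))$. Two strong involutions obtained in this way are equivalent in the sense of the proposition precisely when their discrepancies differ by restrictions to $T$ of elements of $\Gamma_1$ and $\Gamma_2$. Consequently the number of non-equivalent strong involutions $\iota'$ satisfying the hypotheses is bounded above by the cardinality of the double coset space $\Gamma_1|_T \backslash Z_T / \Gamma_2|_T$.

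The principal obstacle is bounding this double coset space by $4$. Here I would analyze the explicit Bonahon--Siebenmann orbifold quotient of each cable space $M_i$ under its strong involution, which by Theorem~\ref{thm:uniquetangles}(II) is the sum of a rational tangle and a $4$--braid in $S^2 \times I$. I would identify which mapping classes in $Z_T$ are realized as restrictions of self-homeomorphisms of $M_i$ preserving this orbifold structure, and argue that the index $|Z_T : \Gamma_i|_T|$ is at most $2$ for each $i$, corresponding to a $\Z/2$ symmetry that swaps the two fixed arcs of $\iota|_{M_i}$ meeting $T$. The hypothesis that $M$ is not itself Seifert fibered enters essentially: it prevents the Seifert fibrations of $M_1$ and $M_2$ from matching across $T$, which would otherwise enlarge $\Gamma_i|_T$ with fiber-slopes glued together that destroy the count. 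Multiplying the two indices yields the asserted bound of $4$.
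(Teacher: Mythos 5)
Your overall framework --- reduce to a fixed involution on each piece via the uniqueness of the strong involution on a cable space, then count the possible discrepancies along $T$ up to the equivariant symmetries of the two sides --- is the right skeleton, and it is essentially how Paoluzzi's Proposition~2.4 (which the paper simply cites rather than reproves) proceeds. But the way you produce the number $4$ does not work. First, your group $Z_T$ is far too large: the centralizer of the elliptic involution $\iota|_T$ in the mapping class group of the four-marked torus is infinite, while $\Gamma_i|_T$ is constrained to preserve the fiber slope of the cable space $M_i$ on $T$ and so has infinite index in $Z_T$; the claimed bound $|Z_T : \Gamma_i|_T| \leq 2$ is false. The constraint you are missing is that $\iota'$ is an involution of $M$ itself, so the discrepancy cannot alter the gluing of $M_1$ to $M_2$: it must be isotopic to the identity on the \emph{unmarked} torus $T$. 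Up to equivariant isotopy, the orientation-preserving self-maps of $(T, \Fix(\iota|_T))$ that are isotopic to the identity on $T$ and commute with $\iota|_T$ form the group $\Z/2 \times \Z/2$ of translations by the four fixed points, which either fix all four marked points or permute them freely; in the quotient these are exactly the four mutations along the Conway sphere to which $T$ descends (Figure~\ref{fig:mutation}). The bound of four is the order of this group --- no index computation is needed, and the further identifications coming from $\Gamma_1|_T$ and $\Gamma_2|_T$ can only decrease the count (which is how Theorem~\ref{thm:satelliteRSR} gets down to two, or to one for $(2,n)$--cables).

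Second, even granting your index claims, the final step is not valid group theory: the number of double cosets $|\Gamma_1|_T \backslash Z_T / \Gamma_2|_T|$ is bounded above by $\min\bigl([Z_T : \Gamma_1|_T],\, [Z_T : \Gamma_2|_T]\bigr)$, not by the product of the two indices. If both indices were $2$, your argument would give at most $2$ inequivalent involutions, which is stronger than the proposition and false in general since all four mutants can be distinct. Finally, the hypothesis that $M$ is not Seifert fibered is used to guarantee that $T$ is a canonical (JSJ) torus, so that every self-homeomorphism of $M$ preserves $T$ up to isotopy and both the restriction condition and the notion of equivalence are well defined; its role is not to shrink $\Gamma_i|_T$.
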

  
The four candidate inequivalent strong involutions arise from glueings of $M_1$ to $M_2$ along $T$ that are isotopic to the identity and commute with the restriction of $\iota$ to $T$.  As such they either fix the four points of the intersection of the fixed set of $\iota$ with $T$ yielding an involution equivalent to $\iota$ or freely permute the four fixed points yielding the other three.  In the quotient $\tau$ of $M$ by $\iota$, these correspond to mutation along the Conway sphere $S$ to which $T$ descends  \cite{conway}. 
 As shown in Figure~\ref{fig:mutation}, a {\em mutation} along $S$ may be viewed as doing one of the four alterations (including the identity alteration) to the tangle {\em (left)} in a neighborhood of $S$.   Alternatively a mutation may be regarded as cutting $\tau$ open along $S$ and reattaching by an automorphism of the marked sphere $(S, S \cap t)$ that preserves slopes (unoriented isotopy classes of simple closed curves in $S-t$).  With $S$ as the unit sphere in $\R^3$ and $S \cap t = (\pm\tfrac{1}{\sqrt{2}}, \pm\tfrac{1}{\sqrt{2}},0)$, these automorphisms are the group of the three rotations by $\pi$ through the coordinate axes with the identity. While a mutation does not affect the branched double cover it will produce a different tangle if the automorphism of $(S, S \cap t)$ does not extend to an automorphism of at least one tangle on either side of $S$.

\begin{thm}[Cabled Knots]\label{thm:satelliteRSR}
Let $K$ be a non-trivial connected cable of a (non-trivial, non-meridional) torus knot in a solid torus $V$.  Then there are at most $2$ non-equivalent strong involutions of $V-N(K)$.  If $K$ is a $(2,n)$--cable, then there is only one strong involution.   

Hence there are at most two tangles in $S^2 \times I$  whose branched double covers are orientation preserving  and boundary component preserving homeomorphic to $V-N(K)$ (both with $S^2 \times \{1\}$  lifting to $\bdry V$, say) but themselves are not orientation preserving homeomorphic. If $K$ is a $(2,n)$--cable any two such tangles are orientation preserving homeomorphic.
\end{thm}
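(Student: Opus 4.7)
The plan is to combine Proposition~\ref{prop:paoluzzi} with the uniqueness of the strong involution on each cable space (supplied by Theorem~\ref{thm:uniquetangles} together with \cite[Proposition~2.8]{paoluzzi}) and then trim the resulting bound of four down to two using obvious symmetries of cable spaces, with an additional symmetry in the $(2,n)$ case giving a further reduction to one.

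First I would decompose $M = V - N(K)$ along the essential torus $T = \bdry N(K')$, where $K'$ is the companion torus knot of which $K$ is an $(r,s)$-cable, obtaining $M = M_1 \cup_T M_2$ with $M_1 = V - \Int N(K')$ the cable space exterior of $K'$ in $V$ and $M_2 = N(K') - \Int N(K)$ the cable space of the $(r,s)$-pattern. Since $K'$ is non-trivial and non-meridional and the cable is non-trivial and connected, both $M_i$ are genuine cable spaces, $T$ is essential, and $M$ is not itself Seifert fibered; the characteristic graph of the JSJ decomposition is a tree on two vertices joined by a single edge. Proposition~\ref{prop:paoluzzi} then applies: up to equivalence there are at most four strong involutions on $M$, all of which restrict to the unique strong involution on each $M_i$ and differ only by the four possible mutations at the Conway sphere $S = T/\iota$.

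To cut this bound from four to two in general, I would exhibit on each $M_i$ the fiber-rotation-by-half self-homeomorphism that rotates every Seifert fiber by half of its length. This commutes with the strong involution (on each fiber circle the involution is a reflection, and reflection commutes with rotation by $\pi$), and its restriction to $T$ is translation by half the fiber direction for that side. Under the quotient $T \to S$ this translation descends to a non-identity mutation $\mu_i$ on the Conway sphere. The subgroup of mutations equivalent to the identity thus contains $\langle \mu_1, \mu_2 \rangle$ inside the Klein-four mutation group, so the number of equivalence classes of candidate strong involutions is $4/|\langle \mu_1, \mu_2 \rangle| \leq 2$.

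For the $(2,n)$-cable case, the inner cable space $M_2$ has exceptional fiber of order exactly $2$, which I claim furnishes an additional fiber-preserving self-homeomorphism of $M_2$ commuting with the strong involution and whose image on $S$ is a mutation independent from $\mu_2$: the order-two cone point on the base orbifold is the center of an extra orientation-preserving involution that lifts to $M_2$ compatibly with the strong involution, a structure unavailable when the exceptional fiber has order $r \geq 3$. With this extra element the subgroup of mutations equivalent to the identity fills the full Klein-four group, so only one equivalence class survives. The final statement about tangles follows by translation: orientation-preserving, boundary-component-preserving homeomorphism classes of tangles in $S^2 \times I$ whose branched double cover is $M$ correspond bijectively to equivalence classes of strong involutions on $M$ via the orbifold quotient. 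I expect the main obstacle to be justifying the extra $(2,n)$ symmetry and confirming its mutation is genuinely independent from $\mu_2$; this requires careful tracking of the fiber slopes induced on $T$ by the two cable space fibrations in terms of the Seifert invariants of the pattern and the companion.
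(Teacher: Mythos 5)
Your overall skeleton matches the paper's: split $V-N(K)$ along the JSJ torus $T$ into two cable spaces, invoke the uniqueness of the strong involution on each cable space together with Proposition~\ref{prop:paoluzzi} to get the bound of four, and then kill mutations. Your reduction from four to two is a genuinely different (and clean) argument: the paper works downstairs, using Theorem~\ref{thm:uniquetangles}(II) to write each quotient $\sigma_i$ as a $4$--braid summed with a rational tangle and checking directly which mutations of the Conway sphere extend (Figure~\ref{fig:cabletanglemutation}), whereas you work upstairs with the equivariant half-fiber rotation. That step looks sound: the strong involution reverses the fiber orientation, so rotation by $\pi$ along the fibers commutes with it, restricts to $T$ as a half-period translation, and hence descends to a non-identity mutation that is realized by an equivariant self-homeomorphism of one side; this forces at least two of the four candidates to coincide. (You should still record, as the paper does, why $T$ meets the fixed set so that it descends to a Conway sphere rather than a torus, but that is minor.)

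The genuine gap is the $(2,n)$ case, and it is not just a matter of filling in details: the mechanism you propose does not exist in the form you describe. Any fiber-preserving equivariant symmetry of a cable space inducing one of the two remaining mutations would have to cover a symmetry of the base annulus-with-one-cone-point that commutes with the reflection $\iota_A$ and acts as a half-rotation on the boundary circle; such a map necessarily interchanges the two fixed arcs $g_a$ and $g_b$ of $\iota_A$, while the cone point lies on one of them (this is exactly the case $e\in g$ in the proof of Theorem~\ref{thm:uniquetangles}). So no such symmetry fixes the cone point, for order $2$ just as for order $r\geq 3$; being ``centered at the order-two cone point'' is not a structure that distinguishes the two cases. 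The extra equivalences in the $(2,n)$ case come from a map that is \emph{not} fiber-preserving: in the quotient, the rational tangle summand of $\sigma_2$ is integral precisely when the cabling index is $2$, and an integral twist region can be flipped by the remaining two mutations and slid back through the summing disk by an isotopy rel--$\bdry$ (Figure~\ref{fig:equivalentsigmamutants}); for a non-integral rational summand this fails and exactly two classes survive. You flagged this step yourself as the main obstacle; as written it is an unproven claim whose stated justification is incorrect, so you need either the paper's quotient-tangle isotopy or a correct upstairs substitute for it.
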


\begin{proof}
There is an essential torus $T$ in $V-N(K)$  splitting the manifold into two cable spaces $M_1$ and $M_2$ each containing one boundary torus of $V-N(K)$, say $\bdry N(K) \subset \bdry M_1$ and $\bdry V \subset \bdry M_2$. Since any given strong involution of $V-N(K)$ must extend to the strong involution of the solid torus $V$ under the trivial filling of $K$, $T$ necessarily intersects the fixed set of that involution.  Let $\iota$ be such a strong involution of $V-N(K)$. We may take $T$ to be invariant under $\iota$, \cite{meeksscott, jacorubinstein}.  Consequentially, $\iota\vert_{M_i}$ is a strong involution for each $i=1,2$.  Since $M_i$ is a cable space, any other strong involution is equivalent to $\iota\vert_{M_i}$  \cite[Proposition~2.8]{paoluzzi}.  Thus Proposition~\ref{prop:paoluzzi} implies there are at most four non-equivalent strong involutions on $V-N(K)$.  Since the quotients of $V-N(K)$ by these strong involutions correspond to the results of mutation of the quotient tangle $\tau = (S^2 \times I, t)$ of $V-N(K)$ by $\iota$ along the Conway sphere $S$ to which $T$ descends under $\iota$, we only need to observe that generically two pairs of these mutants are isotopic rel--$\bdry$ and all four are isotopic rel--$\bdry$ in special situations.

Since $T$ separates, $S$  separates the two boundary components of the tangle.  On each side of $S$ are tangles $\sigma_1$ and $\sigma_2$ in $S^2 \times I$ whose branched double covers are the manifolds $M_1$ and $M_2$.  By Theorem~\ref{thm:uniquetangles} Case (II), each $\sigma_1$ and $\sigma_2$ are the sum of a $4$--braid and a rational tangle.  As discussed above, mutations of $\tau$ along $S$ produce different tangles if the automorphism of $(S, S \cap t)$ does not extend to an automorphism of either $\sigma_1$ or $\sigma_2$.  Since $\sigma_2$ is the sum of two tangles along a disk $D$, two automorphisms of $(S, S \cap t)$ extend to maps of $\sigma_2$ that preserve the sides of $D$ while the other two extend to maps of $\sigma_2$ that swap the sides of $D$.  Because the summands of $\sigma_2$, a rational tangle and a product tangle, are each invariant themselves under mutation, the tangles resulting from the first two mutations (which include the identity) are isotopic rel--$\bdry$ as are the tangles resulting from the last two mutations as indicated in Figure~\ref{fig:cabletanglemutation}. Indeed the first two are isotopic rel--$\bdry$ to $\sigma_2$.  When the rational tangle summand is integral (so that $K$ is a $(2,n)$--cable), all four are isotopic rel--$\bdry$ to $\sigma_2$ as illustrated in Figure~\ref{fig:equivalentsigmamutants}.
\end{proof}

\begin{thm}[Hyperbolic Knots]\label{thm:hyperbolicRSR}
Let $K$ be a hyperbolic knot in the solid torus $V$ such that it has non-trivial surgery producing another solid torus. Then all strong involutions of $V-N(K)$ are isotopic.  

Hence any two tangles in $S^2 \times I$ whose branched double covers are $V-N(K)$ are orientation preserving homeomorphic.
\end{thm}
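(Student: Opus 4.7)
The plan is to exploit Mostow--Prasad rigidity together with the orbifold hyperbolic Dehn surgery theorem of Dunbar--Meyerhoff \cite{dunbar-meyerhoff}. Since $M = V - N(K)$ is a finite-volume hyperbolic $3$-manifold with two cusps, every self-homeomorphism is isotopic to a unique isometry and $\Isom(M)$ is finite; hence every strong involution of $M$ is isotopic to an isometric one, and it suffices to show that $\Isom(M)$ contains at most one involution whose restriction to each boundary torus is the hyperelliptic involution.

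Suppose $\iota_0, \iota_1 \in \Isom(M)$ are two such isometric strong involutions. On each boundary torus $T \subset \bdry M$ both act as $-\Id$ on $H_1(T)$, so $\phi := \iota_0 \iota_1$ is an orientation-preserving isometry of $M$ acting trivially on $H_1(T)$ for every $T$, and in particular preserving every filling slope. Consequently $\phi$ extends across every Dehn filling of $\bdry M$ to an orientation-preserving symmetry of the filled manifold that fixes the peripheral structure of $K$ (or of its dual knot after surgery).

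Next I would close $M$ by filling $\bdry V$ along an appropriate slope. The non-trivial surgery on $K$ already produces another solid torus $V'$; combined with a generic additional filling along $\bdry V$, the hyperbolic Dehn surgery theorem produces a closed hyperbolic $3$-manifold $\widehat{M}$ in which $K$ is a hyperbolic knot with a lens space surgery. The isometry $\phi$ extends to $\widehat{M}$ and acts trivially on the peripheral torus of $K$. By Wang--Zhou \cite{wang-zhou}, the orientation-preserving symmetry group of such a hyperbolic knot is cyclic; any element of a cyclic symmetry group acting trivially on the meridian is the identity, so $\phi$ is trivial on $\widehat{M}$, and hence on $M$. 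Therefore $\iota_0 = \iota_1$ in $\Isom(M)$, so the strong involution of $M$ is unique up to isotopy. The statement about tangles then follows at once: each strong involution descends to an orbifold tangle in $S^2 \times I$ with branched double cover $M$, and isotopic strong involutions give orientation-preserving homeomorphic quotient tangles.

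The main obstacle will be choosing the filling of $\bdry V$ so that simultaneously (i) $\widehat{M}$ is hyperbolic, (ii) $K$ remains a hyperbolic knot in $\widehat{M}$ admitting a lens space surgery, and (iii) both $\iota_0$ and $\iota_1$ extend across the filling; condition (iii) uses the uniqueness of strong involutions on a solid torus from Theorem~\ref{thm:uniquetangles} together with the fact that $\phi$ preserves every slope on $\bdry V$. A secondary technical point is to rule out the possibility that some element of $\Isom(M)$ interchanges the two boundary tori, which is handled by the surgery data: $\bdry V$ and $\bdry N(K)$ admit fillings producing distinct solid tori of different slopes, so no isometry of $M$ can swap them.
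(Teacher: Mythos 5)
Your overall strategy is the paper's: make the involutions isometric via Mostow--Prasad rigidity, fill $\bdry V$ to land on a hyperbolic knot with a lens space surgery, and invoke Wang--Zhou. But as written there are two genuine gaps, and the second one is exactly where the real content of the theorem lies.

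First, Wang--Zhou's theorem concerns hyperbolic knots in $S^3$ admitting nontrivial cyclic surgeries. A ``generic'' filling of $\bdry V$ turns $V$ into a lens space, not $S^3$, so the knot you obtain is a hyperbolic knot in a lens space with a lens space surgery, to which Wang--Zhou does not apply. (Your phrase ``closed hyperbolic $3$--manifold $\widehat{M}$ in which $K$ is a hyperbolic knot'' is also internally inconsistent: the filled ambient manifold is a lens space, and the hyperbolic object is the knot complement, which is not closed.) The fix is to restrict to the infinitely many fillings of $\bdry V$ realizing the standard embedding $S^3 = V \cup W^n$, where $W^n$ is obtained from the complementary solid torus by $1/n$--twisting; for $n$ large the complement of the resulting knot $K^n \subset S^3$ is hyperbolic and Wang--Zhou applies. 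This is precisely the choice the paper makes.

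Second, and more seriously, the step ``$\phi$ is trivial on $\widehat{M}$, and hence on $M$'' is not justified. What you actually obtain is that the extension of $\phi$ across the filling solid torus is \emph{isotopic to the identity in the filled manifold}; such an isotopy need not restrict to $M$, and a finite-order isometry of $M$ can in principle represent a nontrivial element of $\Isom(M) \cong \Out(\pi_1 M)$ while becoming trivial after killing the filling slope in $\pi_1$. Deducing triviality on $M$ from triviality after filling is the crux, and your proposal gives no mechanism for it. The paper avoids the product trick $\phi = \iota_0\iota_1$ altogether: it observes that the two lifts $\widehat{\iota}_1, \widehat{\iota}_2$ are both strong inversions of $K^n$, hence equal as isometries of $S^3 - K^n$ by Wang--Zhou together with Mostow rigidity; it then uses the orbifold hyperbolic Dehn surgery theorem to make the cores $A_1, A_2$ of the two filling solid tori closed geodesics, so that $A_1 = A_2$ (isotopic closed geodesics coincide), and finally restricts the single involution $\widehat{\iota}_1 = \widehat{\iota}_2$ to the complement of this common geodesic, which is $V - N(K)$, recovering $\iota_1 \simeq \iota_2$ directly. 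Some argument of this kind --- pinning down the core of the filling torus as a canonical (geodesic) object preserved by everything in sight --- is needed to descend from the filled manifold back to $M$; without it the proof is incomplete. A minor additional point: your claim that ``any element of a cyclic symmetry group acting trivially on the meridian is the identity'' is false for periodic symmetries in general; what saves the argument is the precise form of Wang--Zhou, namely that the symmetry group here is $\Z_2$ generated by the strong inversion, which acts as $-\Id$ on peripheral homology.
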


\begin{proof}
Since $V-N(K)$ is hyperbolic, Mostow-Prasad Rigidity \cite{mostow,prasad} and work of Waldhausen \cite{waldhausen} imply that a strong involution (or any automorphism) is isotopic to an isometry.  Quotienting $V-N(K)$ by this isometric strong involution produces a hyperbolic orbifold with underlying space $S^2 \times I$, ``pillowcase'' cusps, and order $2$ singular locus.  That is, the hyperbolic orbifold is a tangle in $S^2 \times I$ whose branched double cover is a hyperbolic manifold.  By the  Hyperbolic Dehn Surgery Theorem for Orbifolds \cite[Theorems 5.3 \& 5.4]{dunbar-meyerhoff} all but finitely many rational tangle fillings of a pillowcase cusp produce another hyperbolic orbifold in which the site of the rational tangle filling is a geodesic arc.  

The standard embedding of $V$ into $S^3$ has complementary solid torus $W^0$.  This embeds $K$ as $K^0$ in $S^3=V \cup W^0$.  Let $W^n$ be the solid torus resulting from $1/n$--surgery on the core of $W^0$ and let $K_n$ be the image of $K$ in $S^3=V \cup W^n$.

Assume $\iota_1$ and $\iota_2$ are two isometric strong involutions of $V-N(K)$ and their quotients are the hyperbolic orbifolds $\sigma_1$ and $\sigma_2$.  These strong involutions extend, at least topologically, across the fillings of $V-N(K)$ by $W^n$.  Let $\omega^n$ be the rational tangle filling of $\sigma_i$ corresponding to $W^n$.   Then, by the Hyperbolic Dehn Surgery Theorem for Orbifolds, for all but finitely many $n$ we have that $\sigma_i \cup \omega^n$ is a hyperbolic orbifold in which the site $a_i$ of the filling $\omega^n$ is a geodesic, for each $i=1,2$.

Take a suitably large $n$ so that $\tau_i=\sigma_i \cup \omega^n$ is a hyperbolic filling with geodesic site $a_i$ for both $i=1,2$.  Then the branched double cover of $\tau_i$ is the hyperbolic manifold $S^3-K^n = (V-K) \cup W^n$ in which $a_i$ lifts to a simple closed geodesic $A_i$.  The covering map gives an isometric strong involution $\widehat{\iota}_i$ of $S^3-K^n$ under which $A_i$ is strongly invertible.  Note that $\iota_i$ is isotopic to $\widehat{\iota}_i$ restricted to $V-N(K)$, the complement of a neighborhood of $A_i$.

Since  $A_1$ and $A_2$ are both geodesics and isotopic to the core of $W^n$ in $S^3-K^n$, $A_1=A_2$.  
Since $K^n$ is a strongly invertible hyperbolic knot in $S^3$ with a lens space surgery, it has only one strong inversion up to isotopy and no other symmetries \cite{wang-zhou}.  Thus $\widehat{\iota}_1 = \widehat{\iota}_2$ and hence $\iota_1$ is isotopic to $\iota_2$ on $V-N(K)$.
Moreover  $\tau_1=\tau_2$,  $a_1 = a_2$, and $\sigma_1=\sigma_2$.
\end{proof}

\subsection{Related conjectures}\label{sec:conj}

The above shows that the sites of an RSR between rational tangles up to homeomorphism are in 1-1 correspondence with the knots in a solid torus admitting a non-trivial Dehn surgery producing a solid torus.  What if two or more sites in a tangle correspond to the same knot?  While two such sites in a tangle may admit the same RSR producing tangles that have equivalent branched double covers, it seems unlikely that these resulting tangles should be too different.  The notion of ``same'' for the RSR should be regarded as being covered by the equivalent Dehn surgeries.

\begin{conj}\label{conj:twosites}
Let $\alpha_0$ and $\alpha_1$ be two sites for an RSR on a tangle $\tau$ that lift to isotopic knots in the branched double cover of $\tau$.  If the same RSR $\rho \mapsto \rho'$ on each site $\alpha_0$ and $\alpha_1$ produces tangles equivalent by a sequence of mutations then there must be a sequence of mutations of subtangles of $\tau$ taking $\alpha_0$ to $\alpha_1$.
\end{conj}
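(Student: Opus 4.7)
The plan is to lift to the branched double cover and use an equivariant JSJ analysis together with the uniqueness results already established. Let $\pi\colon \Sigma \to \tau$ be the branched double cover with covering involution $\iota$, and let $K_i = \pi^{-1}(\alpha_i)$ be the lift of the site $\alpha_i$ for $i=0,1$. By hypothesis $K_0$ and $K_1$ are isotopic in $\Sigma$, so after an ambient isotopy of $\Sigma$ we may assume $K_0 = K_1 =: K$. The exterior $M = \Sigma - N(K)$ is the branched double cover of both tangles $\tau - N(\alpha_0)$ and $\tau - N(\alpha_1)$, but with possibly \emph{different} covering involutions $\iota_0, \iota_1$ which coincide with $\iota$ outside a neighborhood of $K$ and give the two quotient structures on $M$.

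The first step is to promote the isotopy carrying $K_0$ to $K_1$ to one that behaves well with respect to both involutions. Via Meeks--Scott \cite{meeksscott} and Jaco--Rubinstein \cite{jacorubinstein}, each $\iota_i$ preserves (up to isotopy) the JSJ decomposition of $M$, and since $K_0 \simeq K_1$, the underlying topological type of $M$ is fixed. I would argue that $\iota_0$ and $\iota_1$ agree on the characteristic tree of the decomposition: the hypothesis that the RSR produces tangles equivalent under mutations ensures that the filled manifold $M \cup W$ (where $W$ is the surgery solid torus determined by $\rho \mapsto \rho'$) quotients under both $\iota_0$ and $\iota_1$ to tangles that differ by a sequence of Conway mutations, and such mutations act trivially on the characteristic tree.

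The second step is a piecewise comparison of $\iota_0$ and $\iota_1$. On each JSJ piece $M_j$, invoke Theorem~\ref{thm:uniquetangles} in the Seifert fibered case, Proposition~\ref{prop:paoluzzi} (extending Paoluzzi) in the cable-space/toroidal case, and Theorem~\ref{thm:hyperbolicRSR} in the hyperbolic case. In each setting, the two restrictions $\iota_0|_{M_j}$ and $\iota_1|_{M_j}$ are equivalent up to an automorphism of $\bdry M_j$ that preserves the slopes of the fixed-point set; these automorphisms descend precisely to mutations of the quotient tangle along the Conway sphere covered by the corresponding JSJ torus. Assembling across the characteristic tree, the difference between $\iota_0$ and $\iota_1$ is realized by a finite sequence of Conway-sphere mutations of the tangle $\tau - N(\alpha_0)$ producing $\tau - N(\alpha_1)$ in a way that carries the meridional disk of $N(\alpha_0)$ to that of $N(\alpha_1)$. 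These mutations, performed on subtangles of $\tau$ disjoint from the sites (or containing them as spectators in the trivial sector), then realize the desired sequence of mutations of subtangles of $\tau$ taking $\alpha_0$ to $\alpha_1$.

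The main obstacle is step two in the cable/toroidal case: the four candidate non-equivalent strong involutions supplied by Proposition~\ref{prop:paoluzzi} need to be reconciled with the framing information carried by the RSR, because a mutation on a JSJ torus might alter the surgery slope on $\bdry N(K)$ in a way that contradicts the hypothesis that the \emph{same} RSR $\rho \mapsto \rho'$ is performed at both sites. Controlling this requires showing that any mutation along a JSJ torus separating $N(K)$ from $\bdry V$ that changes the surgery slope cannot occur, so that the valid mutations are exactly those extending across the whole decomposition --- this is the analogue, for knots with non-trivial JSJ structure, of the rigidity provided by Wang--Zhou \cite{wang-zhou} in the hyperbolic case, and would need to be proven separately.
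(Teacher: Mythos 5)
You should be aware that the statement you are proving is stated in the paper as Conjecture~\ref{conj:twosites} and is left \emph{open} there: the authors offer only partial evidence (the Wang--Zhou rigidity argument when $\tau$ is the unknot in $S^3$, and Example~\ref{exa:twobridgelinkinv}). So there is no paper proof to compare against, and the question is whether your proposal actually closes the conjecture. It does not, and you have in fact flagged one of the reasons yourself at the end.

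The central gap is one of scope. The structural inputs you invoke --- Theorem~\ref{thm:uniquetangles}, Proposition~\ref{prop:paoluzzi} as extended in the paper, and Theorem~\ref{thm:hyperbolicRSR} --- are proved only for the very special manifolds arising as exteriors of knots in solid tori admitting solid-torus surgeries (Seifert fibered over a disk with two exceptional fibers or over an annulus with one, a union of two cable spaces, or a hyperbolic knot exterior rigidified by Wang--Zhou via a lens space surgery). The conjecture, however, concerns an arbitrary tangle $\tau$, so the exterior $M=\Sigma-N(K)$ can be essentially any compact $3$--manifold. Your ``piecewise comparison'' then breaks down: a Seifert piece with three or more exceptional fibers admits multiple non-homeomorphic quotient tangles (the paper notes this explicitly in the proof of Theorem~\ref{thm:uniquetangles}); a hyperbolic piece has no analogue of the Wang--Zhou uniqueness of strong inversions unless it happens to be the exterior of a knot in $S^3$ with a lens space surgery; the paper's extension of Paoluzzi requires every JSJ torus to be invariant, all Seifert bases to have genus $0$, and the characteristic graph to be a tree, none of which is automatic here. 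In addition, your first step --- that mutation-equivalence of the \emph{filled} quotient tangles forces $\iota_0$ and $\iota_1$ to agree on the characteristic tree of $M$ --- is asserted rather than argued; it requires transferring information about quotients of $M\cup W$ back to the involutions of $M$, which is essentially the content of the conjecture. What you have written is the natural line of attack (it is the same philosophy the paper uses to prove Theorem~\ref{thm:sitessimple} in its restricted setting), but as a proof of the general conjecture it assumes precisely the uniqueness and rigidity statements that are unavailable outside the solid-torus setting.
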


The condition that the lifts of $\alpha_0$ and $\alpha_1$ are isotopic is necessary for the conjecture to be possibly true.  For example, there are pairs of distinct Berge knots in $S^3$ for which the same distance $1$ surgery produces the same lens space, see the table in \cite{berge-lens}.  As these knots are strongly invertible, quotienting gives two sites in the tangle $\tau$, the ``closed tangle'' of the unknot in $S^3$, at which the same RSR produces the same $2$--bridge link $\tau'$.  (There is a unique link in $S^3$ that has a given lens space as its branched double cover \cite{hodgsonrubinstein}.)  Of course there is no homeomorphism of $\tau'$  relating the corresponding sites of the RSR because there is no homeomorphism of the lens space relating the dual knots of the two surgeries.  Otherwise the exteriors of these knots, and hence the exteriors of our original knots, would be homeomorphic contrary to knots in $S^3$ being determined by their complements \cite{gordonluecke} and our assumption that the original two knots are distinct.

\begin{remark}
By Wang-Zhou, a strongly invertible knot in $S^3$ with a non-trivial lens space surgery admits a single strong inversion, \cite{wang-zhou}.  Thus Conjecture~\ref{conj:twosites} is true when $\tau$ is the unknot in $S^3$ and the RSR produces a $2$--bridge knot or link.

The Berge knots are the conjecturally complete list of knots in $S^3$ admitting a non-trivial lens space surgery  (the Berge Conjecture \cite{berge-lens},  \cite[Problem 1.78]{kirby}).  Each has tunnel number $1$ and therefore admits a strong involution.  A positive resolution to the Berge Conjecture would then enable a complete classification of the sites of distance $d=1$ RSR between $2$--bridge links and the unknot.
\end{remark}

\begin{conj}\label{conj:1bridge}
If $K$ is a $1$--bridge braid in the solid torus $V$ then up to homeomorphism there is a unique tangle in $S^2 \times I$ whose branched double cover is the manifold $V-N(K)$.
\end{conj}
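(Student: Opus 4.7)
The plan is to follow the strategy of Theorem~\ref{thm:hyperbolicRSR}, but without the crutch of Wang--Zhou (which presupposes a lens-space surgery). I would first stratify by the geometry of $M := V - N(K)$. If $K$ is a torus knot in $V$ then $M$ is a (generalized) Seifert fiber space and Theorem~\ref{thm:uniquetangles}(II) already gives uniqueness up to isotopy. By Gabai's structure theorem for $1$-bridge braids, a non--torus-knot $1$-bridge braid in a solid torus has a hyperbolic exterior (with no essential tori and no essential annuli disjoint from $\bdry N(K)$), so the only remaining case is $M$ hyperbolic.

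Assuming $M$ is hyperbolic, Mostow--Prasad rigidity together with Waldhausen's theorem forces any strong involution $\iota$ of $M$ to be isotopic to an isometric one, with quotient a hyperbolic orbifold tangle $\tau_\iota$ in $S^2 \times I$. Two such tangles $\tau_{\iota_1}, \tau_{\iota_2}$ are orientation-preserving homeomorphic iff $\iota_1$ and $\iota_2$ are conjugate in $\mathrm{Homeo}^+(M)$; by rigidity this reduces to conjugacy in the finite group $\Isom^+(M)$. The task thus becomes: show that all orientation-preserving isometric involutions of $M$ with $4$ fixed points on each boundary torus lie in a single conjugacy class.

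To execute this, I would exploit the natural strong involution $\iota_0$ inherited from the $1$-bridge presentation --- a $\pi$-rotation of $V$ about a fixed axis meeting $\bdry V$ in $4$ points that can be arranged (because $K$ has bridge number $1$) to preserve $K$ and meet $K$ in exactly $2$ points. For any other isometric strong involution $\iota$, I would extend $\iota$ across the trivial filling of $\bdry N(K)$ (the one killing the meridian) to an involution of the solid torus $V$ preserving $K$ setwise; strong involutions of $V$ are classified up to isotopy essentially by their action on $\bdry V$, and together with Gabai's explicit parameterization of $1$-bridge braids this should force the extension to coincide with $\iota_0$ up to $\Isom^+(M)$-conjugation.

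The main obstacle is precisely this last matching step. With Wang--Zhou, the lens-space surgery gives extra rigidity that rules out alternative involutions, but here one must argue intrinsically --- in particular, one must exclude isometric involutions that freely permute the $4$ boundary fixed points or that act nontrivially on the isotopy class of the bridge arc. I expect the crux to be a careful analysis of how any candidate symmetry axis must interact with the braid axis of $K$ and with the meridional disk of $V$, combined with Smith theory applied to the trivial Dehn filling. If this case analysis via Gabai's classification goes through, uniqueness of $\iota$ up to isotopy follows, and hence the quotient tangle in $S^2 \times I$ is unique up to homeomorphism.
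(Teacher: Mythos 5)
The statement you are trying to prove is Conjecture~\ref{conj:1bridge}; the paper offers no proof of it (it is posed as an open problem, with Example~\ref{exa:twobridgelinkinv} given precisely to show that the conclusion fails for tunnel number one knots in solid tori that are \emph{not} $1$--bridge braids). So there is no argument in the paper to compare against, and your proposal must stand on its own. It does not, for two reasons.

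First, your geometric stratification is wrong at the outset: it is false that a non-torus-knot $1$--bridge braid in a solid torus has hyperbolic exterior. The paper itself records (in the proof of Theorem~\ref{thm:sitessimple}) that the exterior of a knot in a solid torus admitting a nontrivial solid torus surgery can be the \emph{toroidal} exterior of a $(2,1)$--cable of a torus knot, and by Gabai every such knot other than a core or torus knot is a $1$--bridge braid; hence $1$--bridge braids with essential tori in their exteriors exist and your trichotomy omits them. For that toroidal case the paper's Theorem~\ref{thm:satelliteRSR} only establishes ``at most two'' inequivalent strong involutions for a general cable (uniqueness only for $(2,n)$--cables), so even the paper's own machinery does not close this case, let alone your sketch. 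Second, in the hyperbolic case the step you correctly identify as ``the main obstacle'' is in fact the entire content of the conjecture, and you do not supply it. Reducing to conjugacy of isometric involutions in $\Isom^+(M)$ is standard; the hard part is showing that two strong inversions of the pair $(V,K)$ obtained by filling the meridian must be conjugate. Your appeal to the classification of involutions of $V$ by their boundary action only controls the involution of the ambient solid torus, not of the pair $(V,K)$, and Example~\ref{exa:twobridgelinkinv} shows that for nearby classes of knots two genuinely inequivalent strong inversions do occur. Any successful argument must therefore exploit the $1$--bridge braid structure in an essential way (e.g.\ via the braid axis and bridge sphere), and the proposal stops exactly where that work would begin. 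As written, this is a plausible plan of attack on an open conjecture, not a proof.
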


Every knot in a solid torus with a distance $1$ surgery producing a solid torus is a $1$--bridge braid \cite{gabai}.  
Notice that all $1$--bridge braids $K$ in $V$ have tunnel number $1$; that is, there is an arc $\alpha$ from $K$ to $\bdry V$ such that $V - N(K \cup \alpha)$ is a genus $2$ handlebody.  
\begin{lemma}
If $K$ is a tunnel number $1$ knot in a solid torus $V$, then $K$ is $1$--bridge.
\end{lemma}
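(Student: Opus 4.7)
The plan is to recognize the graph $K \cup \alpha$ as a \emph{standardly embedded} handlebody-knot in $V$---in the sense that its regular neighborhood $N := N(K \cup \alpha)$ is a solid torus sitting in $V$ in the simplest possible way---and then read off a $1$--bridge presentation for $K$ directly from this standard position.

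Concretely, since $K \cup \alpha$ has first Betti number one, $N$ is a solid torus in $V$; it meets $\partial V$ in a single disk $D_0$ (a collar of $\alpha \cap \partial V$), $K$ is isotopic to its core, and its interior boundary is a once-punctured torus $\Sigma_{1,1}$ whose boundary is $\partial D_0$. The hypothesis that $H := V \setminus N$ is a genus-$2$ handlebody is precisely the statement that $N$ is standardly embedded in $V$. I would exploit this by passing to $S^3$: embed $V \hookrightarrow S^3$ standardly with complementary solid torus $W$, and verify via an Euler-characteristic and Mayer--Vietoris computation that $H \cup W$ (glued along $T^2 \setminus D_0$) has connected torus boundary and $\chi = 0$; combined with a van Kampen analysis that identifies $\pi_1(H \cup W) \cong \mathbb{Z}$, this shows $H \cup W$ is a solid torus. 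Hence $S^3 = N \cup (H \cup W)$ is a genus-$1$ Heegaard splitting, and by Waldhausen's theorem this splitting is standard, so $N \subset S^3$ is an unknotted solid torus and its core $K$ is the unknot in $S^3$.

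Using this together with the data of the tunnel $\alpha$, I then promote standardness in $S^3$ to a standard position inside $V$: after an ambient isotopy, $N$ is the regular neighborhood of the boundary-connect-sum of a small ball on $\partial V$ with a small round loop pushed slightly into $\operatorname{int}(V)$. In this position, a meridional disk $\Delta$ of $V$ chosen transverse to the round loop of the core meets $K$ transversely in exactly $2$ points, and the two complementary arcs of $K$ in the $3$--ball $V \setminus N(\Delta)$ are each $\partial$--parallel. This is the desired $1$--bridge decomposition.

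The main obstacle is the promotion step: ensuring that the standardness of $N$ in $S^3$ can be realized by an isotopy that respects the Heegaard torus $\partial V$, so that $N$ ends up standard in $V$ rather than merely in $S^3$. This can be handled either by an equivariant form of Waldhausen's theorem (respecting the splitting surface $\partial V$), or, more elementarily, by applying the Loop Theorem directly to $H$: since the meridional slope of $V$ admits a representative on the outer portion $T^2 \setminus D_0 \subset \partial H$ that is null-homotopic in $V$ and hence in $H$ (using that $\pi_1(H) \to \pi_1(V) = \mathbb{Z}$ has nontrivial kernel coming from the handle structure), one obtains a compressing disk for $H$ bounding a meridian of $V$, from which $\Delta$ may be constructed directly. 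The verification that the resulting arcs of $K$ in $V \setminus N(\Delta)$ are $\partial$--parallel is then a routine inspection of the standard picture.
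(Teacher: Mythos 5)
There is a fatal gap at the heart of your argument: the claim that $\pi_1(H\cup W)\cong\Z$, hence that $H\cup W$ is a solid torus and $K$ is the unknot in $S^3$. The manifold $H\cup W$ is precisely $S^3-\Int N(K\cup\alpha)$, i.e.\ the exterior of $K$ in $S^3$, so van Kampen returns the knot group of $K\subset S^3$, which is non-abelian whenever $K$ is knotted in $S^3$. A tunnel number $1$ knot in $V$ has no reason to become unknotted under the standard embedding $V\hookrightarrow S^3$: a $(2,3)$--torus knot on a concentric torus in $V$ (or any $1$--bridge braid, e.g.\ a Berge knot in a solid torus) has tunnel number $1$ in $V$ but is a trefoil (resp.\ a nontrivial Berge knot) in $S^3$. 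Mayer--Vietoris does give $H_1(H\cup W)\cong\Z$, but that is all; the passage from $H_1$ to $\pi_1$ is exactly the unknotting problem and cannot be finessed. Your fallback via the Loop Theorem is also broken in two ways. First, ``null-homotopic in $V$ and hence in $H$'' is backwards: inclusions do not reflect null-homotopy, and the meridian of $V$ need not die in $\pi_1(H)$. Second, a compressing disk for $H$ whose boundary is a meridian of $\bdry V$ would be a meridian disk of $V$ disjoint from $K$, which exists only when $K$ lies in a $3$--ball in $V$ --- already false for the core of $V$ --- and in any case the disk $\Delta$ you need meets $K$ twice, so this is not even the right object to produce.

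For comparison, the paper avoids $S^3$ entirely and instead caps $V$ off with a second solid torus $W$ (core $w$) to form a lens space $M$. The tunnel $\tau_0$ is extended radially into $W$ and used to guide a band $B$ joining $K$ to $w$; the resulting arc $\tau$ is an unknotting tunnel for $K$ in $M$ with the key feature that $\tau\cup\gamma$ (where $\gamma=B\cap K$) is isotopic to $w$ and hence has solid torus exterior. Proposition~1.3 of Morimoto--Sakuma then identifies $\tau$ as a $(1,1)$--tunnel, and its dual tunnel $\tau^*$ lies back in $V$ and exhibits the $1$--bridge presentation. If you want to salvage a route through an ambient closed manifold, something like this --- which keeps track of the Heegaard torus $\bdry V$ throughout rather than trying to recover it afterwards from a Waldhausen-type standardness statement --- is what is needed.
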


\begin{proof}
Let $\tau_0$  be an unknotting tunnel for $K$.  That is, $\tau_0$ is an arc in $V$ with an endpoint on $K$ and an endpoint on $\bdry V$ such that $V-N(K \cup \tau_0)$ is a genus $2$ handlebody.  Attach a second solid torus $W$ with core $w$ to $V$ to form a lens space $M$.  Extend $\tau_0$ by a radial arc in $W$ to meet $w$ and use this to guide the placement of a narrow band $B$ (a rectangle) meeting $K$ and $w$ along opposite edges.  The other pair of opposite edges of $B$ union the arc $w-B$ is then an unknotting tunnel $\tau$ for $K$ in $M$.   Let $\gamma$ be the arc $B \cap K$.  Then by construction, $\tau \cup \gamma$ is isotopic to $w$ and thus its exterior is a solid torus.  Proposition~1.3 \cite{morimotosakuma} then implies that $\tau$ is what they call a $(1,1)$--tunnel for $K$ in $M$.  Therefore, as defined in \cite{morimotosakuma}, there is a dual tunnel $\tau^*$ for $K$ which may be regarded as being contained in $V$.  This dual tunnel confers a $1$--bridge presentation for $K$ in $V$.
\end{proof}

We now construct a family of tunnel number $1$ knots in the solid torus that are not braids yet admit two strong involutions whose quotients are distinct tangles in $S^2 \times I$.  These are offered to suggest Conjecture~\ref{conj:1bridge} is not true if its hypotheses are sufficiently weakened.  The following lemma will be useful in showing the two strong involutions are distinct.

\begin{lemma}
\label{lem:unknottingtunnelinvolution}
If $L$ is a tunnel number $1$ link of two components and $\alpha$ is an unknotting tunnel then the strong involution of $L$ arising from the hyperelliptic involution associated to the genus $2$ surface $\bdry N(L \cup \alpha)$ may be arranged to have $\alpha$ in its fixed set.
\end{lemma}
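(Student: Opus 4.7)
The plan is to exhibit a strong involution $\iota$ of the ambient $3$--manifold $M$ that visibly fixes $\alpha$ pointwise, and then to argue that $\iota$ represents the strong involution arising from the hyperelliptic involution of $\Sigma := \bdry N(L \cup \alpha)$. Since $L$ has two components and $N(L \cup \alpha)$ is a genus two handlebody, $L \cup \alpha$ must be a handcuff graph: two disjoint loops joined by the single arc $\alpha$, with endpoints on distinct components of $L$. Set $H_1 = N(L \cup \alpha)$ and let $H_2$ be the complementary handlebody, so that $M = H_1 \cup_\Sigma H_2$ is a genus two Heegaard splitting.

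First I would fix a standard handcuff handlebody $\mathbb{H} \subset \R^3$ obtained by thickening two coplanar unknotted circles placed symmetrically about the $yz$-plane, together with the segment of the $x$-axis joining them. The $\pi$-rotation $\rho : (x,y,z) \mapsto (x,-y,-z)$ preserves $\mathbb{H}$ as an orientation-preserving involution whose fixed set in $\mathbb{H}$ is three properly embedded arcs (all contained in the $x$-axis): the connecting segment between the two circles, plus one arc through the far antipodal point of each circle. Choosing a homeomorphism $\varphi : H_1 \to \mathbb{H}$ that carries the handcuff spine $L \cup \alpha$ onto the model handcuff and $\alpha$ onto the connecting segment, the pullback $\iota_1 := \varphi^{-1} \circ \rho \circ \varphi$ is an involution of $H_1$ with $\alpha \subset \Fix(\iota_1)$ and strongly inverting each component of $L$. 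By construction $\iota_1|_\Sigma$ is an involution of a genus two surface with six isolated fixed points and quotient $S^2$, so it is the hyperelliptic involution of $\Sigma$.

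The hyperelliptic involution of $\Sigma$ extends, uniquely up to isotopy, to every genus two handlebody it bounds; in particular it extends across $H_2$ as some involution $\iota_2$. Gluing $\iota_1$ and $\iota_2$ along $\Sigma$ produces a strong involution $\iota$ of $M$ with $\alpha \subset \Fix(\iota)$. The main subtlety, and the step I expect to require the most care, is identifying this $\iota$ with ``the'' strong involution arising from the hyperelliptic involution of $\Sigma$ that appears in the statement: one must invoke uniqueness (up to isotopy) of the hyperelliptic involution on a genus two surface and of its extensions to the bounding handlebodies, so that any strong involution of $M$ restricting to the hyperelliptic involution on $\Sigma$ is ambient-isotopic to the one we constructed. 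Granting these standard facts, the constructed $\iota$ is the desired strong involution.
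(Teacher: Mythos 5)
Your proof is correct and follows essentially the same route as the paper's, which simply observes that the strong involution restricted to the handlebody $N(L\cup\alpha)$ is the hyperelliptic involution of that genus two handlebody (citing Adams); your explicit handcuff model with the $\pi$--rotation about the axis containing $\alpha$ is precisely the picture that one-line argument has in mind. The extension over the complementary handlebody and the uniqueness appeal are the standard facts the paper leaves implicit.
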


\begin{proof}
This follows directly from considering the strong involution of $L$ restricted to the handlebody $N(L\cup \alpha)$.  See also \cite{adams}.
\end{proof}

\begin{example}
\label{exa:twobridgelinkinv}

Let $L$ be the $2$--bridge link having corresponding continued fraction $[4a, 4b, 4a]$ with integers $a, b \neq 0$ as shown in Figure~\ref{fig:twobridgelink}.  Both components of $L$ are unknots, and since $L$ is a $2$--bridge link, it has tunnel number $1$.  We may then take the complement of one component to produce a solid torus $V$ in which the other component $K$ has tunnel number $1$.  One may show that $K$ is not a braid in $V$.  (Let $D$ be a meridional disk of $V$ that $K$ intersects algebraically and geometrically the same number of times. View $V-N(K)$ as a sutured manifold with $\bdry (V-N(K))$ as toroidal sutures.  Then one observes the sutured manifold $V'$ obtained by decomposing $V-N(K)$ along $D'=D\cap (V-N(K))$ is not a product sutured manifold by doing further decompositions along product disks.  If $K$ were a braid, then $D'$ would be a fiber in a fibration of $V-N(K)$, and $V'$ would be a product sutured manifold. See e.g.\ \cite{gabaiSM} or \cite{scharlSM}.)


On the left of Figure~\ref{fig:twobridgelink} one finds a strong involution $\iota_0$ of $L$ rotating about a horizontal axis $F$, the fixed set of $\iota_0$.  We may then regard $\iota_0$ as an involution on the exterior of $L$  (i.e.\ $V-N(K)$)  whose quotient produces a $4$--strand tangle $\sigma_0$ in $S^2 \times I$ shown in the middle of Figure~\ref{fig:twobridgelink} (with Dehn surgery instructions).  If the involution $\iota_0$ were to arise from the hyperelliptic involution associated to an unknotting tunnel, then by Lemma~\ref{lem:unknottingtunnelinvolution} one of the four arcs of  $F-L$ must be an unknotting tunnel.  Only two of these arcs connect the two components of $L$ and they are related by another symmetry of $L$, so either both are unknotting tunnels or neither are.  Let $\alpha$ be one of these arcs.  Observe that because $a,b\neq0$, the tangle $L-N(\alpha)$ is a Montesinos  tangle and not a rational tangle (use the plane through $F$ orthogonal to the page to split the exterior of $\alpha$ into the two summands).  However, the classification of unknotting tunnels of $2$--bridge links \cite{adamsreid} shows that $L-N(\alpha)$ should be a rational tangle if $\alpha$ were to be an unknotting tunnel.  Hence $\iota_0$ is not a strong involution arising from an unknotting tunnel of $L$.  Therefore the strong involution $\iota_1$ on the exterior of $L$ arising from an unknotting tunnel (about the vertical axis in Figure~\ref{fig:twobridgelink}) has a quotient producing a different $4$--strand tangle $\sigma_1$ in $S^2 \times I$ shown on the right of Figure~\ref{fig:twobridgelink}.     

Indeed, the branched double covers of both $\sigma_0$ and $\sigma_1$ are homeomorphic.  Both $\bdry \sigma_0$ and $\bdry \sigma_1$ have a component that is the quotient of $\bdry N(K)$ under their corresponding involutions $\iota_0$ and $\iota_1$.  For evidence towards Conjecture~\ref{conj:twosites} observe that both have rational tangle fillings $\rho$ (the ``obvious'' closure of the bottom component) giving the same rational tangle $\tau = \sigma_0 \cup \rho=\sigma_1 \cup \rho$.  Moreover there is no homeomorphism of $\tau$ equating the sites of $\rho$ in these two fillings.  Furthermore, an RSR $\rho \mapsto \rho'$  will produce two tangles $\tau_0' = \sigma_0 \cup \rho'$ and  $\tau_1' = \sigma_1 \cup \rho'$ that have the same branched double cover.  Of course generically these tangles $\tau_0'$ and $\tau_1'$ will be inequivalent.  (For each individual case of $\rho'$, this may be seen by filling $\tau_0' = \sigma_0 \cup \rho'$ and $\tau_1'=\sigma_1 \cup \rho'$ with, say, a $+1$--tangle and observing the resulting links are inequivalent.)

Mecchia describes similar constructions of pairs of non-homeomorphic links (rather than tangles) in $S^3$ with homeomorphic branched double covers, \cite{mecchia}.
\end{example}

\section{Knots in lens spaces with Seifert fibered exteriors.}\label{sec:seifertfibered}

Building on work of \cite{darcysumners} we catalog in Theorem~\ref{thm:seifertknots} all knots in lens spaces whose exteriors are generalized Seifert fibered spaces (rather than just ``true'' Seifert fibrations).  We further determine in Theorem~\ref{thm:seifertsurgery} all surgeries on such knots yielding lens spaces.   Let us highlight the identification and study of the knots isotopic to a regular fiber in a true Seifert fibration with non-orientable base  of a lens space, namely Lemmas~\ref{lem:GNO} though \ref{thm:kleinsurgery}.

Following the notation of \cite{JN} for Seifert fibered spaces, $M(g;(\alpha_1, \beta_1), \dots, (\alpha_n,\beta_n))$ denotes the generalized Seifert fibration over the closed surface of genus $|g|$ (a connect sum of $g$ tori if $g \geq 0$ and a connect sum of $-g$ projective planes if $g<0$) with the regular fibers running $\alpha_i$ times longitudinally and $\beta_i$ times meridionally about the $i$th exceptional fiber.

\begin{thm} \label{thm:seifertknots}
If $K$ is a knot in the lens space $Y$ such that its exterior $Y-N(K)$ admits a generalized Seifert fibration, then either $K$ is a torus knot or $K$ is a regular fiber of the true Seifert fibration $M(-1;(\alpha,1))$ of $L(4\alpha,2\alpha-1)$.
\end{thm}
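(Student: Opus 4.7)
The plan is to analyze the generalized Seifert fibration of $M := Y - N(K)$ and extend it across the filling $N(K)$ to recognize $Y$ as a Seifert fibered lens space, then invoke the classification of Seifert fibrations of lens spaces to pin down $K$. First I would dispose of the case when $M$ is a solid torus: then $M \cup N(K) = Y$ is a genus--$1$ Heegaard splitting, so $K$ is isotopic onto the common Heegaard torus and is a (lens space) torus knot. Assume henceforth that $M$ is not a solid torus, so $M$ is an honest Seifert fibered space over a compact $2$-orbifold $F$ with exactly one boundary circle (the image of $\bdry N(K)$). The regular fiber determines a slope $\phi$ on $\bdry N(K)$; let $\mu$ denote the meridional slope of $N(K)$.

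If $\mu \neq \phi$, then the Seifert fibration of $M$ extends across $N(K)$ to a Seifert fibration of the lens space $Y$. I would then apply the classification of closed Seifert fibered lens spaces: up to homeomorphism they are either $M(0;(\alpha_1,\beta_1),(\alpha_2,\beta_2))$ over $S^2$ with at most two exceptional fibers, or $M(-1;(\alpha,\beta))$ over $\RP^2$ with at most one exceptional fiber. In the orientable base case, every fiber (regular or exceptional) lies on a vertical invariant torus that is a genus--$1$ Heegaard torus of $Y$, so $K$ is a torus knot. In the non-orientable base case, computing $H_1$ from the standard Seifert invariant presentation shows that $M(-1;(\alpha,\beta))$ is a lens space exactly when, after normalizing invariants, $\beta = 1$, in which case the manifold is $L(4\alpha, 2\alpha - 1)$ and $K$ is a regular fiber, matching the theorem.

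If instead $\mu = \phi$, filling along the regular fiber collapses each fiber of $M$ onto $F$, and the resulting $Y$ decomposes as a connect sum with one lens space summand per exceptional fiber of $F$, together with $S^1 \times S^2$ factors arising from positive genus or non-orientability of $F$. For this connect sum to be a prime lens space, $F$ must be so simple that $M$ is already a solid torus (contradicting our assumption) or the decomposition degenerates so that $K$ sits as an essential curve on a Heegaard torus, giving a torus knot once again.

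The main obstacle will be the non-orientable base analysis: performing the $H_1$ calculation for $M(-1;(\alpha,\beta))$ carefully enough to pin the lens space down as exactly $L(4\alpha, 2\alpha - 1)$ with $\beta = 1$ after standard normalization, and ruling out other non-orientable configurations for $F$ (notably base the M\"obius band with exceptional fibers, whose filling would produce a Seifert fibered manifold over $\RP^2$ with two exceptional fibers, a structure no lens space admits).
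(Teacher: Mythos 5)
Your overall route---extend the Seifert fibration across $N(K)$ and invoke the Jankins--Neumann classification of Seifert fibered lens spaces---is the same as the paper's, and your separate treatment of the fiber-slope filling $\mu=\phi$ by a connected-sum count is an acceptable substitute for the paper's device of using generalized (order $0$) fibers so that the filling always extends. The genuine gap is in your non-orientable base case. When $\mu\neq\phi$, the core $K$ of the filling solid torus becomes a fiber of multiplicity $\Delta(\mu,\phi)$ in the extended fibration $M(-1;(\alpha,1))$, so $K$ is a regular fiber only when $\Delta(\mu,\phi)=1$. When $\Delta(\mu,\phi)=\alpha>1$, the exterior $M$ is the twisted $S^1$--bundle over the M\"obius band (equivalently, the twisted $I$--bundle over the Klein bottle) and $K$ is the \emph{exceptional} fiber of $M(-1;(\alpha,1))$. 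Your proposal simply declares that ``$K$ is a regular fiber'' in this branch, which is false in that subcase, and the distinction matters: for $|\alpha|\geq 2$ the regular fiber is \emph{not} a torus knot (Lemma~\ref{lem:GNOtorusknot}) while the exceptional fiber \emph{is} one, so the two alternatives in the theorem's dichotomy are not interchangeable here. The paper closes this case with the non-fiber-preserving homeomorphism $M(-1;(\alpha,1))\cong M(0;(2,1),(2,-1),(-1,\alpha))\cong M(0;(2,1),(2,2\alpha-1))$, under which the exceptional fiber over $\RP^2$ becomes a regular fiber of a fibration over $S^2$ and hence a torus knot; some argument of this kind must be added to your proof.

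A smaller point: the reduction ``$M$ is not a solid torus, so $M$ is an honest Seifert fibered space over a $2$--orbifold'' is not automatic, since the hypothesis only provides a \emph{generalized} fibration on $M$, which may carry an interior fiber of order $0$ (for instance the exterior of a meridian of a Heegaard solid torus). Such cases force $K$ to be a trivial, hence torus, knot and are easy, but they should be acknowledged rather than folded silently into the solid-torus case.
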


Note this extends \cite{darcysumners} by considering generalized Seifert fibrations.

\begin{proof}
Assume $K$ is a knot in the lens space $Y$ such that its exterior $Y-N(K)$ admits a generalized Seifert fibration.  The Seifert fibration extends across $N(K)$ to give a generalized Seifert fibration of $Y$ in which $K$ is isotopic to a fiber.  By \cite{JN}, a generalized Seifert fibration of a lens space has the form $M(0;(\alpha_1,\beta_1),(\alpha_2,\beta_2))$ or $M(-1;(\alpha,1))$.  

If $K$ is isotopic to a regular fiber of a generalized and non-true Seifert fibration of $Y$ then it is isotopic to a meridional curve of an exceptional fiber.  Hence $K$ is a trivial knot (bounding a disk) and may be isotoped to be a meridional curve of a Heegaard solid torus and hence a torus knot. If $K$ is isotopic to a regular fiber of a true Seifert fibration with orientable base, then $K$ is a torus knot.  If $K$ is isotopic to a regular fiber of a true Seifert fibration with non-orientable base, then $K$ fits the second conclusion of the theorem.  If $K$ is isotopic to an exceptional fiber of a generalized (true or not) Seifert fibration with orientable base, then it is isotopic to the core of a Heegaard solid torus of a lens space and hence a torus knot.   If $K$ is isotopic to an exceptional fiber of a generalized (true or not) Seifert fibration with non-orientable base, then there is a non-fiber preserving homeomorphism $M(-1;(\alpha,1)) \cong M(0;(2,1),(2,-1),(-1,\alpha))$ (as noted in Theorem 5.1 \cite{JN}) in which $K$ is the ``exceptional'' fiber associated to $(-1,\alpha)$ (see the proof of Lemma~8 \cite{darcysumners} and the proof of Lemma~4 \cite{kohn}).  This Seifert fibration is equivalent to $M(0;(2,1),(2,2\alpha-1))$ in which $K$ is a regular fiber and hence isotopic to a torus knot as above.
\end{proof}

\begin{remark}
In the last case of Theorem~\ref{thm:seifertknots} $K$ is typically not a torus knot as we shall conclude in Lemma~\ref{lem:GNOtorusknot}.
\end{remark}

Express the lens space $L(4k,2k-1)$, $k>0$, as the union of two solid tori $V$ and $W$ along their boundary tori, and let $D_V$ and $D_W$ be meridional disks of these solid tori whose boundaries intersect (minimally) $4k$ times.  Around, say, $\bdry D_V$ number the intersections in order from $0$ to $4k-1$.
The {\em $2k$^{th} \ grid number one knot} in the lens space $L(4k,2k-1)$ is the union of a properly embedded arc in each $D_V$ and $D_W$ connecting the intersections $0$ and $2k$.  (This definition generalizes to speak of $p-1$ oriented grid number one knots (also called {\em simple} knots) in the lens space $L(p,q)$ \cite{bgh,hedden,rasmussen}, yet here we only need this specific case.)

\begin{lemma}\label{lem:GNO}
If $K$ is isotopic to a regular fiber of $M(-1;(k,1))$ for $k \neq 0$, it is the $2k$^{th} \ grid number one knot in $L(4k,2k-1)$.
\end{lemma}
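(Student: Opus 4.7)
The plan is to build a Seifert-fibered decomposition of $L(4k,2k-1)$ in which $K$ sits transparently, then produce a compatible genus-one Heegaard splitting, and finally identify the specific grid number one knot via a homology computation. First, the Seifert fibration $M(-1;(k,1))$ decomposes the lens space as $L(4k,2k-1) = N_M \cup_T N_D$, where $N_D$ is the fibered solid torus neighborhood of the exceptional fiber and $N_M$ is the fibered preimage of a M\"obius band $M_\gamma \subset \RP^2$ whose core circle passes through the projection of $K$ but not through the cone point. Because $L(4k,2k-1)$ is orientable while $M_\gamma$ is not, $N_M$ is the twisted $I$-bundle over a Klein bottle $\mathrm{Kl}$, and $K$ appears on $\mathrm{Kl}$ as the regular fiber over a point of the core circle of $M_\gamma$.

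I next plan to exhibit a genus-one Heegaard splitting $L(4k,2k-1) = V \cup_{T_H} W$ adapted to $K$ by choosing an arc $\delta \subset \RP^2$ from the cone point back to itself that crosses the core of $M_\gamma$ exactly once --- at the projection of $K$. Then $\delta$ cuts $\RP^2$ into a disk, and the preimage of $\delta$ in $L(4k,2k-1)$ together with appropriate suturing across the cone fiber will assemble into a Heegaard torus $T_H$. Taking $D_V, D_W$ to be meridional disks on the two sides of $T_H$, pieced together from halves of $N_M$ and meridional disks of $N_D$, the construction forces $K$ to meet $T_H$ transversely in exactly two points and to cross each of $D_V, D_W$ in a single properly embedded arc. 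Confirming that $D_V, D_W$ are genuine meridional disks and that the two arcs of $K$ are straight chords (rather than higher-grid wrapping arcs) is most cleanly carried out by lifting to the orientation double cover $T^2 \times I \to N_M$, where the relevant slopes and intersection numbers are computed as integer homology classes on a torus.

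Finally, to pin down \emph{which} grid number one knot $K$ is, I compute $[K] \in H_1(L(4k,2k-1)) = \Z/4k$. From the standard Seifert presentation
\[\pi_1(M(-1;(k,1))) = \langle v, q, h \mid v^2 q = 1,\ q^k h = 1,\ vhv^{-1} = h^{-1} \rangle,\]
eliminating $q = v^{-2}$ and $h = v^{2k}$ and applying $vhv^{-1} = h^{-1}$ forces $v^{4k}=1$; thus $\pi_1 \cong \Z/4k$ and the regular fiber class is $[K] = v^{2k} = 2k \in \Z/4k$. Since the $n$-th grid number one knot carries homology class $\pm n$ in $\Z/4k$ (read directly from its arc presentation joining the labeled intersection points $0$ and $n$), the matching $[K] = 2k$ identifies $K$ as the $2k$-th grid number one knot. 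The main technical obstacle is the Heegaard splitting step --- in particular, showing that the arcs of $K$ across $D_V, D_W$ are unwrapped chords rather than $(1,1)$-arcs of the same homology class but strictly larger grid number. Settling this will hinge on the slope calculation in the orientation double cover mentioned above.
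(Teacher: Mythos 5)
Your first half matches the paper: you decompose $M(-1;(k,1))$ as the twisted $I$--bundle over a Klein bottle glued to a fibered solid torus, and you place $K$ on the Klein bottle as a regular fiber, which is indeed the non-separating, orientation-preserving curve there. The homology computation at the end is also correct ($\pi_1\cong\Z/4k$ with $[K]=h=v^{2k}=2k$, a class fixed by multiplication by units and by negation, so it would single out the $2k$^{th} grid number one knot \emph{among simple knots}). The genuine gap is the middle step, which you yourself flag as ``the main technical obstacle'' and then defer: you never establish that $K$ actually achieves grid number one, i.e.\ that it can be isotoped to meet a genus-one Heegaard torus in two points with a single properly embedded arc in each meridional disk. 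Without that, the homology class proves nothing, since $2k\in\Z/4k$ is carried by infinitely many non-simple knots. Moreover the construction you sketch is not coherent as stated: if the arc $\delta\subset\RP^2$ passes through the projection of $K$, then $K$ is an entire fiber over a point of $\delta$ and hence lies \emph{inside} $\pi^{-1}(\delta)$, so it cannot meet the resulting Heegaard torus transversely in two points; and it cannot simply be left on that torus either, since for $|k|\geq 2$ this knot is not a torus knot (Lemma~\ref{lem:GNOtorusknot}). So the pushing-off and the count of intersection points are exactly where the content lies, and they are missing.

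The paper closes this gap by a softer argument that you could adopt: there are only four isotopy classes of essential simple closed curves on a Klein bottle, and the unique non-separating orientation-preserving one is a union of a spanning arc from each of the two \mobius bands in a decomposition of the Klein bottle; since the Klein bottle in $L(4k,2k-1)$ is unique up to isotopy and may be positioned as the union of two \mobius bands, one in each Heegaard solid torus, the curve $K$ is then automatically presented as a union of one spanning arc in each meridional disk --- that is, already in grid position --- and one reads off that it is the $2k$^{th} grid number one knot. If you prefer your explicit Heegaard-splitting route, you must carry out the slope/intersection computation you postponed; as written, the proof is incomplete at its decisive step.
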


\begin{proof}
If $K$ is isotopic to a regular fiber of $M(-1;(k,1))$ for $k \neq 0$, then $K$ is not necessarily a torus knot.   Decompose the Seifert fibration $M(-1;(k,1))$ as $(S^1 \tilde \times \mbox{\it Mob}) \cup (\mbox{\it Solid torus})$, the union of a twisted circle bundle over the \mobius band and a Seifert fibration of the solid torus with one exceptional fiber.      Observe that the manifold of this twisted circle bundle is homeomorphic to the exterior of a $(2,1)$--torus knot in $S^1 \times S^2$ (see \cite{kohn}) and also to a twisted $I$ bundle over the Klein bottle. 
Moreover, each regular fiber of $M(-1;(k, 1))$ is isotopic to a circle $S^1 \times \{{\rm pt}\}$ of this twisted circle bundle. 
 When expressed as $(I \tilde \times \mbox{\it Klein}) \cup (\mbox{\it Solid torus})$, a regular fiber of $M(-1;(k,1))$ may then be seen to be isotopic to a non-separating, orientation preserving curve on the Klein bottle.

Note that there are only four isotopy classes of essential simple closed curves on a Klein bottle.  Expressing a Klein bottle as the union of two \mobius bands, the cores of the two \mobius bands and their common boundary represent three of these isotopy classes.   The fourth is a union of a spanning arc of each \mobius band, and only it is non-separating and orientation preserving.

If our lens space $Y$ contains a Klein bottle, then $Y\cong L(4k, 2k-1)$ for some $k$ and it has only one up to isotopy, see e.g.\ \cite{bredonwood}.  It may be positioned as the union of two \mobius bands, one in each Heegaard solid torus.   Therefore, if $K$ is isotopic to a regular fiber of the Seifert fibration $M(-1;(k,1))$ of $Y$, then $Y$ contains a Klein bottle divided into two \mobius bands by the Heegaard torus, and $K$ is isotopic to a union of spanning arcs of these \mobius bands.   
This expression of $K$ as a union of spanning arcs of the two \mobius bands shows we may actually view $K$ as the $2k$^{th} \ grid number one knot in $Y$ when $k\neq0$.  
(When $k=0$, $K$ is a trivial knot in $Y \cong S^1 \times S^2$ and hence a torus knot.)
\end{proof}

\begin{lemma}\label{lem:fibered2k}
Let $K$ be the $2k$^{th} \ grid number one knot in the lens space $Y \cong L(4k,2k-1)$ for $k \neq 0$.  If $K$ is fibered, then $k=\pm1$.
\end{lemma}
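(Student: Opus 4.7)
The plan is to exploit the Seifert fibered structure on the exterior $M=Y-N(K)$ given by Lemma~\ref{lem:GNO}: since $K$ is a regular fiber of the Seifert fibration $M(-1;(k,1))$, the exterior $M$ is Seifert fibered over the \mobius band orbifold $\mathcal{B}$ carrying a single cone point of order $|k|$ (and no cone point when $|k|=1$). Because $K$ is a regular fiber, nearby Seifert fibers on $\partial N(K)$ are parallel longitudes of $K$, so the Seifert fiber slope on $\partial M$ is a longitude of $K$; fix a basis $(\mu,\lambda)$ of $H_{1}(\partial M)$ with $\mu$ the meridian of $K$ and $\lambda$ the Seifert fiber. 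Two homological facts will drive the proof: $\mu$ generates $H_{1}(M;\mathbb{Q})\cong\mathbb{Q}$ and so is non-torsion (standard for knot exteriors in rational homology spheres, via the long exact sequence of $(Y,M)$ with $\mathbb{Q}$ coefficients), while $\lambda$ is $2$-torsion in $H_{1}(M)$ since the non-orientable loop on $\mathcal{B}$ conjugates the regular fiber class to its inverse in $\pi_{1}(M)$, yielding $2\lambda=0$ in the abelianization.

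Assume for contradiction that $K$ is fibered, so $M$ fibers over $S^{1}$ with connected fiber surface $F$; since $\partial F$ bounds $F$ in $M$, we have $[\partial F]=0$ in $H_{1}(M)$. By Waldhausen's theorem on essential surfaces in Seifert fibered manifolds, $F$ may be isotoped to be either \emph{horizontal} (transverse to every Seifert fiber) or \emph{vertical} (a union of Seifert fibers). In the horizontal case, transversality forces $\partial F\cdot\lambda\neq 0$ on $\partial M$, so $[\partial F]=p\mu+r\lambda\in H_{1}(\partial M)$ with $p\neq 0$. Pushing into $H_{1}(M;\mathbb{Q})$ kills the torsion class $[\lambda]$ and leaves $p[\mu]\neq 0$, contradicting $[\partial F]=0$.

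In the vertical case, $F=p^{-1}(\gamma)$ for a properly embedded $1$-submanifold $\gamma\subset\mathcal{B}$ avoiding the cone point, and the fibration map $\psi\colon M\to S^{1}$ is constant along Seifert fibers and so factors as $\psi=\tilde\psi\circ p$ with $\tilde\psi\colon\mathcal{B}\to S^{1}$ an orbifold fibration. Every orbifold fibration over $S^{1}$ has zero orbifold Euler characteristic, but
\[
\chi^{\mathrm{orb}}(\mathcal{B})=\chi(\text{M\"ob})-\left(1-\tfrac{1}{|k|}\right)=\tfrac{1-|k|}{|k|},
\]
which is nonzero for $|k|\geq 2$ (geometrically, a codimension-two cone point of order $\geq 2$ cannot lie on a one-dimensional orbifold fiber). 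Both cases therefore fail for $|k|\geq 2$, forcing $k=\pm 1$. The step I expect to require the most care is the horizontal/vertical dichotomy for $F$ (via Waldhausen or Jaco), which uses boundary-incompressibility of the fiber of a fibration over $S^{1}$ in addition to its incompressibility; the other key input, identifying $\lambda$ as both a longitude of $K$ and a torsion class in $H_{1}(M)$, is what makes the horizontal case collapse immediately.
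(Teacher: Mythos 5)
Your approach is genuinely different from the paper's: the paper identifies the fiber surface explicitly as the annulus $B\setminus K$ cut from the Klein bottle $B$ containing $K$, deduces that the monodromy swaps its boundary components so that the exterior is a twisted $I$--bundle over the Klein bottle, and then invokes the classification of Seifert fibrations of lens spaces to force $Y\cong L(4,1)$. You instead run the horizontal/vertical dichotomy inside the Seifert fibered exterior. Your horizontal case is correct ($[\mu]$ non-torsion, $[\lambda]$ two-torsion, $[\partial F]$ null-homologous in $M$), and the reduction to the two cases is legitimate, since the fiber of a fibration over $S^1$ is incompressible, $\partial$--incompressible, and non-separating, hence not $\partial$--parallel.

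The gap is in the vertical case, at the assertion that ``the fibration map $\psi\colon M\to S^1$ is constant along Seifert fibers and so factors as $\psi=\tilde\psi\circ p$.'' The dichotomy only lets you isotope the single level set $F=\psi^{-1}(x_0)$ to be vertical; it does not make the remaining level sets saturated, and it does not produce a descent of $\psi$ to the base orbifold. (Knowing $[\lambda]$ is torsion only shows $\psi$ restricted to a Seifert fiber is null-homotopic, not constant.) That factorization is exactly where the content of the vertical case lies, so as written the step begs the question. The case can be closed directly: a connected vertical surface with boundary is an annulus $F=p^{-1}(\gamma)$ for a non-separating essential arc $\gamma$ in the base avoiding the cone point, so cutting $M$ along $F$ yields the Seifert fibered solid torus over the disk with one cone point of order $|k|$, in which the two copies of $F$ are vertical annuli whose cores are regular fibers representing $|k|$ times a generator of $\pi_1$ of that solid torus. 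But if $F$ is a fiber of a fibration of $M$ over $S^1$, then $M$ cut along $F$ is $F\times I$ and the core of $F\times\{0\}$ generates $\pi_1(F\times I)$. Hence $|k|=1$. With this repair your argument is complete and stands as a genuine alternative to the paper's proof.
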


\begin{proof}
 Since $K$ is a non-separating, orientation preserving curve on a Klein bottle $B$,  then $B \cut K$ is an annulus $A$.  Since an orientation on $A$  makes $\bdry A$ a pair of coherently oriented curves on $\bdry N(K)$, it follows that $A$ is a rational Seifert surface for $K$.  Since $K$ is not a core of a Heegaard solid torus, $A$ must be a minimal genus rational Seifert surface.  Assuming $K$ is fibered, then $A$ is a fiber and the monodromy of the fibration of $Y-N(K)$ is an orientation preserving homeomorphism of $A$ that swaps boundary components.  Hence $Y-N(K)$ is a twisted $I$ bundle over the Klein bottle, where each fiber of $K$ has the form $A = I \times \gamma$ and $\gamma$ is a non-separating, orientation preserving curve on the Klein bottle. (A Klein bottle may be fibered by such curves.)  
 
We may also view $Y-N(K)$ as a twisted $S^1$ bundle over the \mobius band where each annulus fiber is fibered by circles.  Then the torus $\bdry (Y-N(K))$ has a basis of the boundary of the \mobius band and a fiber. Since a meridian of $N(K)$ minimally intersects $\bdry A$ twice on $\bdry Y-N(K)$, it runs once in the direction of the boundary of the \mobius band and some number $\beta$ of times in the fiber direction.  Thus the twisted $S^1$ bundle over the \mobius band extends to a Seifert fibration $M(-1;(1,\beta))$ on $Y$.  Since $Y$ is a lens space, the classification of Seifert fibrations of lens spaces implies $\beta = \pm1$.  Hence $Y \cong L(4,1)$. Therefore $k=\pm1$.
 \end{proof}

\begin{lemma}\label{lem:GNOtorusknot}
Let $K$ be a regular fiber of the Seifert fibration $M(-1;(k,1))$ of a lens space $Y \cong L(4k, 2k-1)$.  
Then up to homeomorphism of the lens space, either
\begin{enumerate}
\item $k=0$ and $K$ is a trivial knot in $S^1 \times S^2$,
\item $|k| = 1$ and $K$ is a torus knot in $\pm L(4,1)$, or 
\item $|k| \geq 2$ and $K$ is toroidal, non-fibered, and not a torus knot.
\end{enumerate}
\end{lemma}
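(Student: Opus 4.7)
The plan is to handle the three cases separately, using a different technique in each, and to identify at the end which part requires the most care.

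For $k=0$, the lens space $L(0,-1)$ is $S^1\times S^2$ and the Seifert fibration $M(-1;(0,1))$ is a \emph{generalized} rather than true fibration. As observed in the proof of Theorem~\ref{thm:seifertknots}, a regular fiber of such a generalized (non-true) Seifert fibration of a lens space is a meridional curve of a degenerate fiber and hence bounds a disk, so $K$ is trivial.

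For $|k|=1$, the Seifert invariant $(1,1)$ is regular (not genuinely exceptional), so the exterior $L(4k,2k-1)\setminus N(K)$ is the total space of an $S^1$-bundle over the M\"obius band, i.e.\ the twisted $I$-bundle over the Klein bottle. I would then invoke the well-known second Seifert fibration of this manifold, over $D^2(2,2)$, and extend it across $N(K)$ to obtain a Seifert fibration of $\pm L(4,1)$ over $S^2$. Since no lens space admits a Seifert fibration with exceptional fibers $(2,2,a)$ for $a\geq 2$ (such a fibration produces a prism manifold), the meridian of $K$ must be a section curve of the second fibration, forcing $K$ to appear as a regular fiber of a Seifert fibration of $\pm L(4,1)$ with orientable base. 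By Theorem~\ref{thm:seifertknots}, $K$ is then a torus knot.

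For $|k|\geq 2$, non-fiberedness is the contrapositive of Lemma~\ref{lem:fibered2k}, and $K$ is not a torus knot because every torus knot in a lens space has exterior Seifert fibered over a disk with at most two exceptional fibers, and any such manifold fibers over $S^1$. For toroidality, I would isotope $K$ to be a regular fiber of $M(-1;(k,1))$ whose image in $\RP^2$ lies off the essential core circle, so that the Klein bottle $B$ over this core is disjoint from $K$. The torus $T=\bdry N(B)$ then splits the knot exterior into $N(B)$ (a twisted $I$-bundle over the Klein bottle) and the exterior of $K$ within the solid torus $L(4k,2k-1)\setminus N(B)$. The latter piece is the cable space $C(|k|,\ast)$: $K$ is a regular fiber of the induced Seifert fibration of that solid torus with one exceptional fiber of multiplicity $|k|$, hence a $(|k|,\ast)$-torus knot. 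Both sides of $T$ have incompressible boundary, and for $|k|\geq 2$ the cable space is not $T^2\times I$, so $T$ is an essential torus in the knot exterior.

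The main obstacle will be verifying rigorously in Case $|k|=1$ that the extended Seifert fibration really does place $K$ as a regular fiber: \textit{a priori}, distinct Dehn fillings of the twisted $I$-bundle over the Klein bottle producing $\pm L(4,1)$ could give non-isotopic knots, so the argument hinges on a careful enumeration of the possible Seifert fibrations of $\pm L(4,1)$.
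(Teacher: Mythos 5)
Your handling of $k=0$ and of $|k|\geq 2$ follows essentially the paper's route: triviality for $k=0$ comes from the degenerate fiber as in the proof of Theorem~\ref{thm:seifertknots}, non-fiberedness from Lemma~\ref{lem:fibered2k}, and toroidality from isotoping $K$ off the Klein bottle into the complementary solid torus, where the knot exterior splits along $\bdry N(B)$ into a twisted $I$--bundle over the Klein bottle and a cable space. Your case $|k|=1$ is genuinely different: the paper first identifies $K$ as the $2k$^{th} grid number one knot (Lemma~\ref{lem:GNO}) and then exhibits an explicit isotopy of that grid diagram onto the Heegaard torus of $\pm L(4,1)$ (Figure~\ref{fig:L41isotopy}), whereas you recognize the exterior as the twisted $I$--bundle over the Klein bottle, pass to its second Seifert fibration over $D^2(2,2)$, and use the classification of Seifert fibrations of lens spaces \cite{JN} to force the meridian of $K$ to meet the fiber exactly once, so that the extended fibration of $\pm L(4,1)$ has orientable base and $K$ as a regular fiber. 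This is a clean, picture-free alternative, and the ``main obstacle'' you flag is not actually one: you do not need to control which filling of the twisted $I$--bundle occurs, only that the filling slope has distance one from the fiber of the $D^2(2,2)$ fibration (distance zero yields $\RP^3\#\RP^3$ and distance $d\geq 2$ yields a Seifert fibration over $S^2(2,2,d)$, never a lens space), and any distance-one filling makes the core a multiplicity-one, hence regular, fiber. The price of your route is reliance on the classification of Seifert fiberings of lens spaces, which the paper invokes elsewhere anyway. One small patch is needed in the case $|k|\geq 2$: your assertion that every torus knot in a lens space has exterior a Seifert fiber space over the disk that fibers over $S^1$ fails for the meridional (trivial) torus knot, whose exterior is reducible when the lens space is not $S^3$; as in the paper's proof, you should first observe that $K$ represents the nonzero class $2k$ in $H_1(L(4k,2k-1))\cong \Z/4k\Z$, so it is not a trivial knot, and only then conclude from non-fiberedness that it cannot be a (necessarily non-trivial, hence fibered) torus knot.
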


\begin{proof}
The case $k=0$ is discussed at the beginning of the proof of Theorem~\ref{thm:seifertknots}.  Lemma~\ref{lem:GNO} shows if $k \neq 0$, then $K$ is the  $2k$^{th} \ grid number one knot in $L(4k,2k-1)$.

If $k = \pm1$ then the grid diagram of $K$ gives an isotopy to a simple curve on the Heegaard torus as shown in the grid diagram for the second grid number one knot in $L(4,1)$ (Figure~\ref{fig:L41isotopy}).   (As such $K$ intersects a meridian of each Heegaard solid torus twice and hence bounds a \mobius band in each Heegaard solid torus.  Indeed, $K$ may be isotoped from its position on the Klein bottle in $\pm L(4,1)$ as an orientation preserving, non-separating curve to an orientation preserving, separating curve.)

If $|k|\geq 2$ then Lemma~\ref{lem:fibered2k} shows that $K$ is not fibered.  Because $K$ is non-null homologous, it is not  a trivial knot.  Since a Seifert fibered space over the disk with two exceptional fibers admits a fibration as a surface bundle over the circle, then all non-trivial torus knots in lens spaces are fibered.  Hence $K$ is not a torus knot.   Furthermore, since $K$ is an orientation preserving curve on a Klein bottle, it may be isotoped in the lens space to be disjoint from the Klein bottle.  After isotoping $K$ off into the solid torus complement $V$ of a regular neighborhood of the Klein bottle, the boundary of $V$ is then an essential torus in the exterior of $K$.  (It is incompressible into the neighborhood of the Klein bottle, and $K$ would be contained in a ball and hence null-homologous if it compressed into $V$.)
\end{proof}


\begin{thm}\label{thm:kleinsurgery}
Let $K$ be a regular fiber of the Seifert fibration $M(-1;(k,1))$ of a lens space $Y \cong L(4k, 2k-1)$.  Since $K$ is an orientation preserving curve on a Klein bottle, the Klein bottle endows $K$ with a framing.
If $K$ admits a non-trivial Dehn surgery yielding a lens space, then with respect to this framing either
\begin{enumerate}
\item $k=0$ and $\frac{1}{n}$--surgery on $K \subset S^1 \times S^2$ returns $S^1 \times S^2$ for all $n \in \Z$,
\item $k=\pm1 $ and $ \pm (1+\frac{1}{n})$--surgery on $K \subset \pm L(4,1)$ yields $ \pm L(4(n+1),2(n+1)-1)$, or
\item $k=\pm2$ and $\pm 1$--surgery on $K \subset \pm L(8,3)$ yields $\mp L(8,3)$.
\end{enumerate}
\end{thm}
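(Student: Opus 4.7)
My plan is to compute the Klein bottle framing in terms of the Seifert data on $Y-N(K)$, and then, by case analysis on $k$, classify the Dehn fillings of $Y-N(K)$ yielding lens spaces.

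First I would identify $\lambda_B$: the Klein bottle $B\subset Y$ arises as the preimage under the Seifert projection $\pi\colon Y\to \RP^2$ of an essential $\RP^1\subset\RP^2$ passing through $\pi(K)$ but missing the exceptional fiber. Cutting $B$ along $K$ leaves a vertical annulus $A=B-K$ sitting over the arc $\RP^1 - \{\pi(K)\}$, whose two boundary components on $\bdry N(K)$ are parallel regular fibers. Hence $\lambda_B$ equals the regular fiber slope $\phi$ on $\bdry N(K)$.

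With this identification, I would treat the three cases of Lemma~\ref{lem:GNOtorusknot} in turn. For $k=0$, $K$ is a trivial knot in $S^1\times S^2$, and a concrete model of the Klein bottle (the union of two central \mobius bands in the two Heegaard solid tori, with $K$ realized as a great circle in one spherical slice) shows that $\lambda_B$ equals the disk framing. A Rolfsen twist then verifies that $1/n$-surgery preserves $S^1\times S^2$. For $k=\pm 1$, Lemma~\ref{lem:GNOtorusknot}(2) places $K$ as a torus knot on a Heegaard torus $T$ of $\pm L(4,1)$. Here I would apply Moser's classification of torus-knot surgeries (as adapted to lens spaces) to obtain the one-parameter family of lens-space surgeries in the $T$-framing, and then convert to the $\lambda_B=\phi$ basis (the difference being a determinable integer owing to the $(2,1)$-position of $K$ on $T$) to recast these slopes as $\pm(1+1/n)$ with output $\pm L(4(n+1),2(n+1)-1)$.

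For $|k|\geq 2$, a non-trivial Dehn filling along a slope $r\neq\phi$ extends the Seifert fibration to $M(-1;(k,1),(\alpha,\beta))$ with $\alpha=\Delta(r,\phi)$; the filling at $r=\phi$ yields a reducible manifold, not a lens space under our hypotheses. By Theorem~5.1 of \cite{JN}, a Seifert-fibered lens space has base $S^2$ with two exceptional fibers or base $\RP^2$ with one; accordingly $M(-1;(k,1),(\alpha,\beta))$ is a lens space only when the fibration reduces to the $\RP^2$-form (forcing $\alpha=1$) or when it admits a secondary $S^2$-base refibering. I would verify via a fundamental-group computation on the standard Seifert presentation that for $|k|\geq 3$ the $\alpha=1$ reductions give only the trivial surgery, while for $|k|=2$ a secondary refibering occurs precisely at $(\alpha,\beta)=(1,\pm 1)$, realizing $M(-1;(2,1),(1,\pm 1))\cong\mp L(8,3)$. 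Converting $(\alpha,\beta)=(1,\pm 1)$ back to the $\lambda_B=\phi$ framing yields surgery coefficient $\pm 1$ as in case (3).

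The hardest part will be ruling out lens-space output in the $|k|\geq 3$ case: this demands careful combinatorics with the JN classification and fundamental-group arithmetic to show $M(-1;(k,1),(\alpha,\beta))$ admits no secondary $S^2$-base fibering. A second subtlety is tracking orientations in the $|k|=2$ case, where the output is $\mp L(8,3)$ rather than $\pm L(8,3)$; this reflects the chirality of $L(8,3)$ and will need careful bookkeeping through the orientation-reversing character of the secondary refibering.
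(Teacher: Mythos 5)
Your proposal is correct in outline but takes a genuinely different route from the paper for the heart of the theorem, the case $|k|\geq 2$. The paper first isotopes $K$ off the Klein bottle into its complementary solid torus to see that only $\frac{1}{n}$--surgeries are candidates, and then passes to the quotient: it realizes $Y\cong L(4k,2k-1)$ as the branched double cover of $S(4k,2k-1)$, realizes the surgery as an RSR on an arc in a spanning disk of one component (Figure~\ref{fig:kleinbranch}), and observes that the resulting link has a $(k,n)$--double twist knot component which must be unknotted --- forcing $|n|\leq 1$ and then $k=\pm2$ via the $(2,k\mp1)$--torus link condition. You instead stay upstairs: extend the Seifert fibration over the filling to get $M(-1;(k,1),(\alpha,\beta))$ with $\alpha=\Delta(r,\phi)$ and invoke the Jankins--Neumann classification. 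This works, and two remarks may help you execute it. First, the exclusion for $\alpha\geq 2$ is easier than you fear: the base orbifold $\RP^2(k,\alpha)$ has $\chi^{orb}\leq 0$ when $k,\alpha\geq 2$, so the filling has infinite fundamental group and is irreducible, hence is not a lens space and no ``secondary refibering'' combinatorics is needed; the fundamental-group arithmetic is only needed for $\alpha=1$, where $M(-1;(k,1),(1,\beta))\cong M(-1;(k,1+k\beta))\cong M(0;(2,1),(2,-1),(-(1+k\beta),k))$ is a lens space iff $|1+k\beta|\leq 1$, i.e.\ $\beta=0$ (trivial) or $k\beta=-2$. Second, and relatedly, your claim that the refibering occurs at $(\alpha,\beta)=(1,\pm1)$ for $|k|=2$ is not quite right: only the single value of $\beta$ with $1+k\beta=-1$ gives a non-trivial lens space filling (the opposite sign gives $M(-1;(2,3))$, a prism manifold), which is consistent with the theorem asserting exactly one surgery slope per sign of $k$; this is precisely the sign bookkeeping you flagged. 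Your treatments of $k=0$ and $k=\pm1$ essentially coincide with the paper's. What the paper's route buys is an explicit tangle picture of the exceptional surgery (and the remark that it is a normal closure of an RSR between rational tangles); what your route buys is a uniform Seifert-fibered argument that avoids the unknotting analysis of double twist knots.
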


\begin{proof}
Let $\mu$ and $\lambda$ be a standard meridian-longitude basis for $\bdry N(K)$ where $\lambda$ is the framing induced on $K$ by the Klein bottle.

If $k=0$, then $K$ is a trivial knot in $S^1 \times S^2$ and $\lambda$ is the standard framing.  Thus $\frac{1}{n}$--surgery on $K$ returns $S^1 \times S^2$.  Other surgeries would produce a connect sum of $S^1 \times S^2$ with a lens space other than $S^3$.

If $k=1$ then the isotopy of $K$ from its position as a non-separating, orientation preserving curve on the Klein bottle to its position as a torus knot shows its framing as a torus knot is $\lambda' = \lambda-\mu$.    With the framing $\lambda'$, $\frac{1}{n}$--surgery (i.e.\ surgery along the slope $n [\lambda']+[\mu]$) yields the lens space $L(4(n+1), 2(n+1)-1)$.   With respect to the framing $\lambda$ this surgery is $\frac{n+1}{n}=1+\frac{1}{n}$.   Mirroring provides the analogous statement for $k=-1$.

If $|k|\geq2$ then since $K$ may be isotoped to be disjoint from the Klein bottle and the complement of any Klein bottle in a lens space is a solid torus, any surgery on $K$ that returns another lens space must be a surgery that transforms this complementary solid torus into another solid torus.  In fact, since $K$ is a torus knot (winding more than once) in this complementary solid torus where $\lambda$ is still the framing, only $\frac{1}{n}$ surgeries potentially yield lens spaces.  


We may view $Y \cong L(4k, 2k-1)$ as the branched double cover of the $2$--bridge link $S(4k,2k-1)$.  Indeed, $S(4k, 2k-1)$, shown in Figure~\ref{fig:kleinbranch}(a), is a pair of unknots each bounding a disk that the other geometrically intersects twice. In the branched double cover, each of these disks lifts to a Klein bottle (and the two lifted Klein bottles are isotopic).  An arc on one of these disks, as shown in Figure~\ref{fig:kleinbranch}(b), that connects the two intersections of the other unknot component lifts to a 
non-separating, orientation preserving curve on the Klein bottle lift of the disk.  Hence the lift of the arc is isotopic to our knot $K$ in $Y$.  Surgery on $K$ may then be viewed as the branched double cover of the link obtained by an RSR of a neighborhood of this arc.  The tangle corresponding to the exterior of $K$ is shown in Figure~\ref{fig:kleinbranch}(c).  The framing $\lambda$ on $K$ induced by the Klein bottle arises in the tangle picture from the disk that lifts to the Klein bottle.  Thus, since $|k|\geq2$, we only need to consider the links arising by twists shown in Figure~\ref{fig:kleinbranch}(d).   


Because the result must be a $2$--bridge link, both components must be unknots.  As pictured, one component is a $(k,n)$--double twist knot.  In order for this component to be an unknot, we must have either $|k|\leq 1$ or $|n| \leq 1$.  Since $|k| \geq 2$ it must be that $|n| \leq 1$.  The case $n=0$ corresponds to the trivial surgery.  If $n = \pm 1$, then the component is a  $(2,k\mp1)$--torus link and hence is an unknot only if $k = \pm 2$.  Figure~\ref{fig:kleinbranchk=2} shows the situation $n=1, k=2$ where $S(8,3)$ is transformed into $S(8,5)=-S(8,3)$.  Thus, in the cover, when $k=2$, $+1$--surgery on $K \subset L(8,3)$ yields $-L(8,3)$.  Similarly, by mirroring, when $k=-2$, $-1$--surgery on $K \subset -L(8,3)$ yields $L(8,3)$.  Moreover when $|k|>2$, no non-trivial surgery on $K$ yields a lens space.
\end{proof}

\begin{remark}
Similar to $|k|\geq2$, the cases $|k| \leq 1$ in Theorem~\ref{thm:kleinsurgery} could have also been determined by examining the RSR of Figure~\ref{fig:kleinbranch}(c) that yield $2$--bridge links.  The RSR for $k=0$ and $k=1$ are shown in Figure~\ref{fig:kleinbranchk=01}.
\end{remark}

\begin{remark}
Incidentally,  these distance $1$ RSR between the $2$--bridge links in Figures~\ref{fig:kleinbranchk=2} and \ref{fig:kleinbranchk=01} may be viewed as normal closures of distance $1$ RSR
between rational tangles.
\end{remark}

\begin{thm}\label{thm:seifertsurgery}
Let $K$ be a knot in a lens space $Y$ whose exterior admits a generalized Seifert fibration.  If a non-trivial surgery on $K$ yields a lens space, then one of the following occurs:
\begin{enumerate}
\item $K$ is the core of a Heegaard solid torus of the lens space, every surgery on $K$ yields another lens space, and every lens space may be obtained by surgery on $K$. 
\item $K$ is a $(p,q)$--torus knot in the lens space $L(r,s)$ and, using its framing as a torus knot, $\frac{1}{n}$--surgery yields the lens space $L(r+n\delta p,s+n\delta q)$ where $\delta =ps-rq$.  (If $|p|=1$ or $|ps-rq| = 1$ then $K$ is the core of a Heegaard solid torus as in the previous case and hence has more surgeries.)
\item $K$ is the $4$th grid number one knot in $\pm L(8,3)$ and, using its framing as an orientation preserving curve on a Klein bottle, $\pm1$--surgery on $K$ yields $\mp L(8,3)$.
\end{enumerate}
\end{thm}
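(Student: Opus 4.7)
The plan is to invoke Theorem~\ref{thm:seifertknots} to reduce to two cases: either $K$ is a torus knot in $Y$, or $K$ is a regular fiber of the non-orientable-base fibration $M(-1;(k,1))$ of $Y\cong L(4k,2k-1)$. Case (1) will emerge as a degenerate subcase of both; cases (2) and (3) correspond to the two generic situations.

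For the torus knot case, position $K$ as a $(p,q)$--curve on the Heegaard torus $T$ of $Y=L(r,s)=V\cup_T W$, with the meridians of $V$ and $W$ of slopes $1/0$ and $r/s$ in a standard basis. The foliation of $T$ by parallel copies of $K$ extends over $V$ and $W$ and endows $Y-N(K)$ with a generalized Seifert fibration over a disk with at most two exceptional fibers, of orders $|p|$ and $|\delta|$ where $\delta=ps-rq$. When $\min(|p|,|\delta|)=1$, a change of Heegaard splitting exhibits $K$ as the core of some Heegaard solid torus and Rolfsen twisting realises every lens space as a surgery, placing $K$ in case~(1). When $|p|,|\delta|\geq 2$, every Dehn filling of $Y-N(K)$ is either Seifert fibered or reducible: filling along the fiber slope yields the reducible connect sum $L(p,\ast)\# L(\delta,\ast)$, which is not a lens space, while a filling at distance $d$ from the fiber slope yields a Seifert fiber space over $S^2$ whose three exceptional fibers have orders $|p|,|\delta|,d$. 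Such a space is a lens space exactly when $d=1$, i.e.\ precisely along the $1/n$--surgeries in the torus-knot framing (which coincides with the section framing in the Seifert invariants). A direct computation with the Seifert invariants then identifies the resulting lens space as $L(r+n\delta p,\,s+n\delta q)$, giving case~(2).

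For the Klein-bottle-fiber case, combine Lemma~\ref{lem:GNOtorusknot} with Theorem~\ref{thm:kleinsurgery}. When $k=0$, $K$ is a trivial knot in $S^1\times S^2$, hence the core of a Heegaard solid torus: case~(1). When $|k|=1$, Lemma~\ref{lem:GNOtorusknot} shows $K$ is a torus knot in $\pm L(4,1)$, so any lens space surgery is already covered by case~(2). When $|k|\geq 2$, Theorem~\ref{thm:kleinsurgery} enumerates all non-trivial lens space surgeries and confirms that only $|k|=2$ supports one, namely $\pm 1$--surgery on $K\subset \pm L(8,3)$ yielding $\mp L(8,3)$, which is exactly case~(3).

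The principal obstacle is the sign-sensitive arithmetic in the torus-knot subcase: one must identify the exact Seifert invariants of $Y-N(K)$ from $(p,q,r,s)$, confirm that the torus-knot framing agrees with the section framing so that $1/n$--surgery sits at distance $1$ from the fiber slope, and then verify by direct calculation that absorbing the distance-$1$ filling into a new Seifert presentation gives the lens space with parameters $L(r+n\delta p,\,s+n\delta q)$. This is routine but orientation-sensitive, and is cleanest to carry out by writing down the Seifert invariants of $Y-N(K)$ explicitly and invoking the standard classification of lens spaces among Seifert fibered spaces over $S^2$ with two exceptional fibers.
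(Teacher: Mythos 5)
Your proposal is correct and follows essentially the same route as the paper: reduce via Theorem~\ref{thm:seifertknots} to the torus-knot case versus the regular-fiber-over-non-orientable-base case, handle the former by the classical Seifert-fibered analysis of torus-knot surgeries (which the paper simply cites to Moser, Jankins--Neumann, and Heil rather than re-deriving as you do), and dispatch the latter with Lemma~\ref{lem:GNOtorusknot} and Theorem~\ref{thm:kleinsurgery}. The extra detail you supply on exceptional fiber orders $|p|$, $|\delta|$ and distance-one fillings is exactly the content of those citations.
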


\begin{proof}
Together Theorem~\ref{thm:seifertknots} and Lemma~\ref{lem:GNO} show that $K$ is either a torus knot or the $2k$^{th} \ grid number one knot in $\pm L(4k, 2k-1)$.  Surgeries on torus knots are well known and may be understood through Seifert fibrations,  see e.g.\ \cite{JN,moser,heil}.  
This gives the first two items.
Since trivial knots in lens spaces (such as $S^1 \times S^2$) are torus knots, Lemma~\ref{lem:GNOtorusknot} shows if $K$ is not a torus knot then $|k| \geq 2$.  Theorem~\ref{thm:kleinsurgery} then gives the third item.
\end{proof}

\begin{remark}
For non-integral surgeries, the results of the above theorem are obtained in \cite{darcysumners} through the Cyclic Surgery Theorem \cite{cgls} and the study of Seifert fibrations.
\end{remark}

\bibliographystyle{amsplain}
\bibliography{adjacent2.bib}

\begin{figure}
\centering
\includegraphics{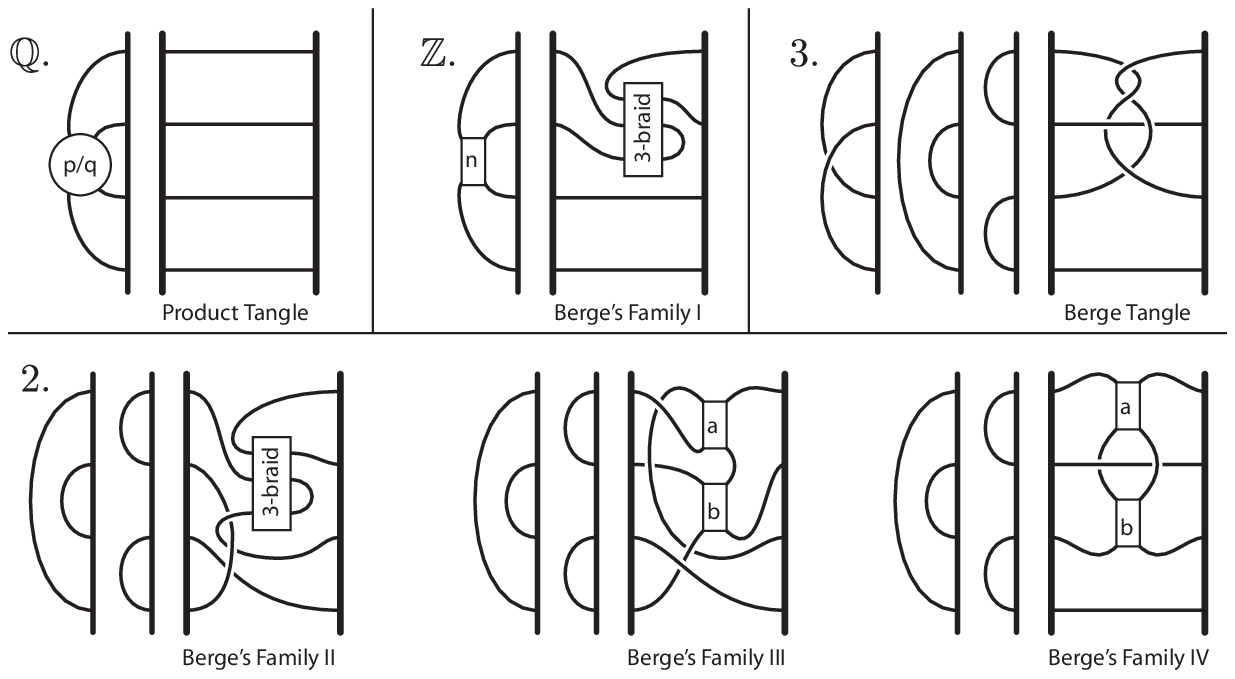}
\caption{}
\label{fig:fillingsofS2xI}
\end{figure}

\begin{figure}
\centering
\includegraphics{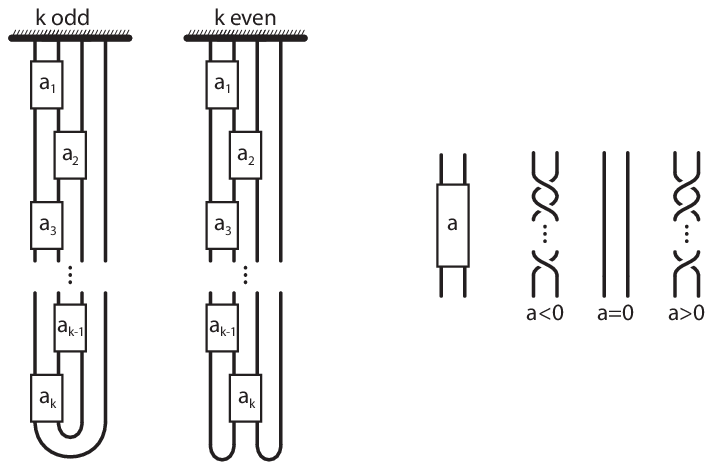}
\caption{}
\label{fig:rationalplat}
\end{figure}

\begin{figure}
\centering
\includegraphics{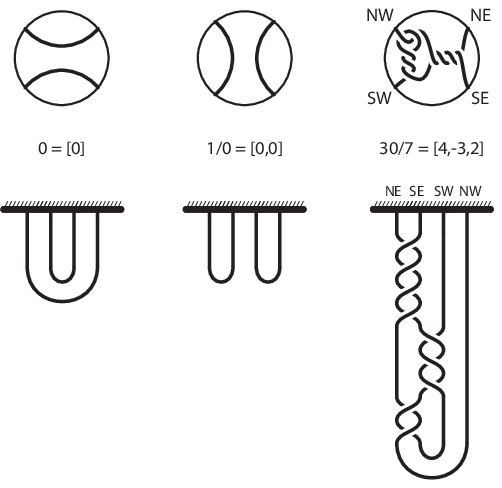}
\caption{}
\label{fig:darcysumnersfig}
\end{figure}

\begin{figure}
\centering
\includegraphics[angle=90, width=4.75in]{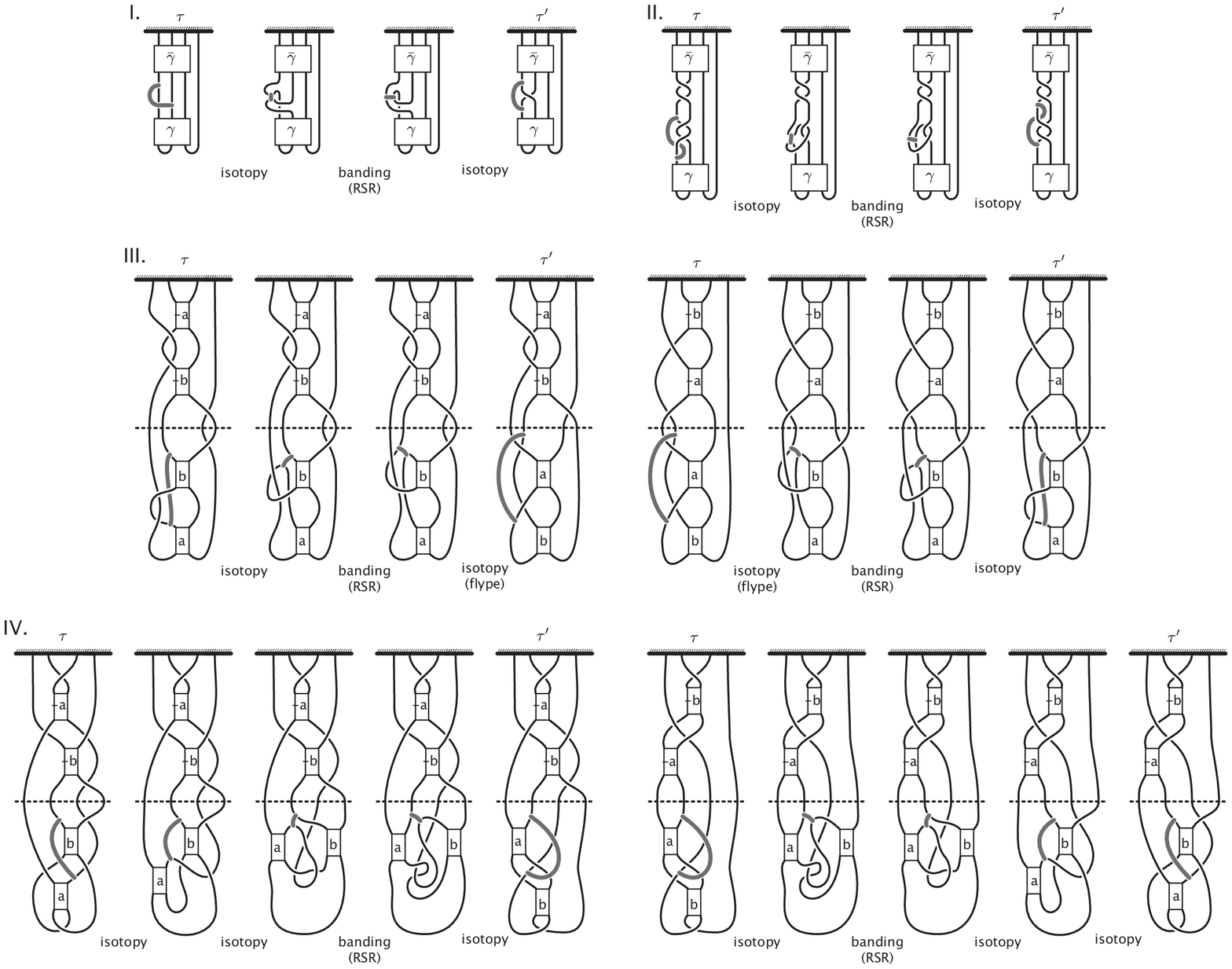}
\caption{}
\label{fig:d=1}
\end{figure}


\begin{figure}
\centering
\includegraphics{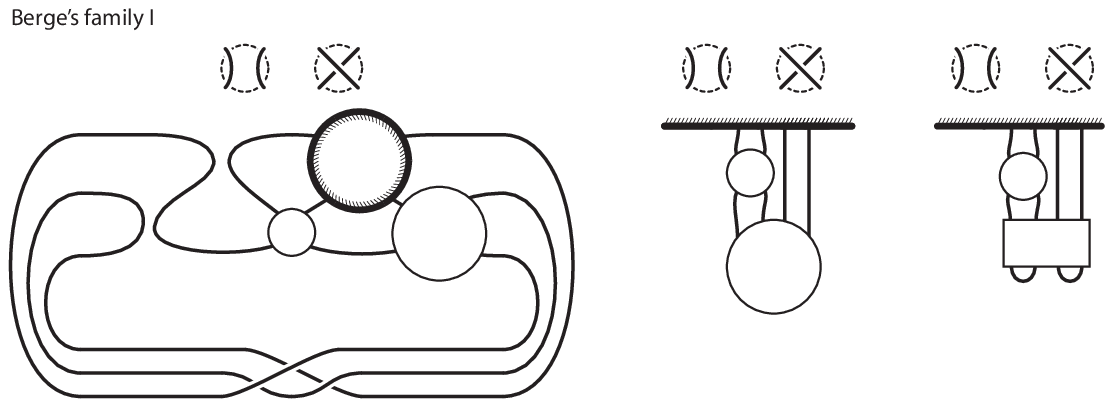}
\caption{}
\label{fig:bergeI}
\end{figure}

\begin{figure}
\centering
\includegraphics{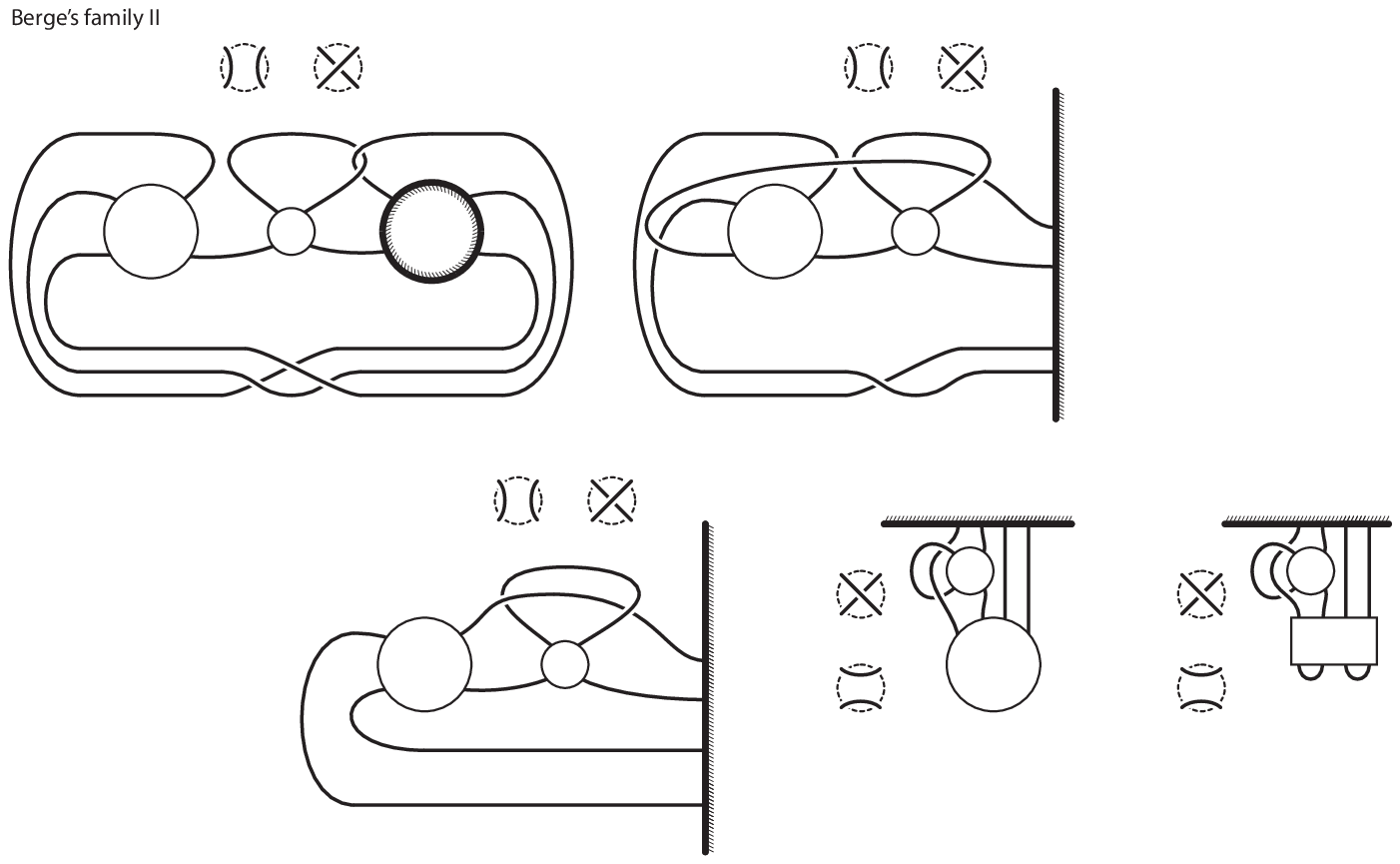}
\caption{}
\label{fig:bergeII}
\end{figure}

\begin{figure}
\centering
\includegraphics{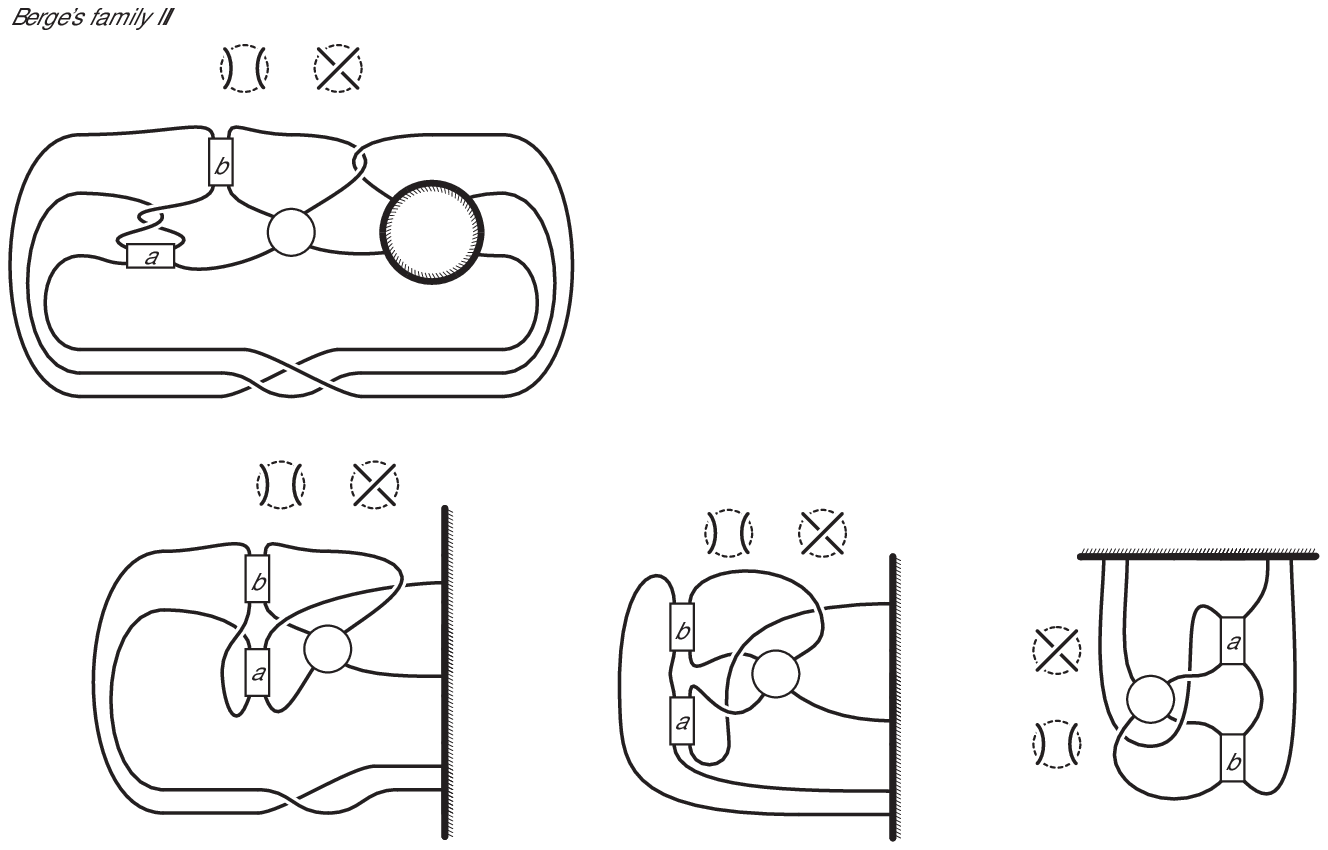}
\caption{}
\label{fig:bergeIII}
\end{figure}

\begin{figure}
\centering
\includegraphics{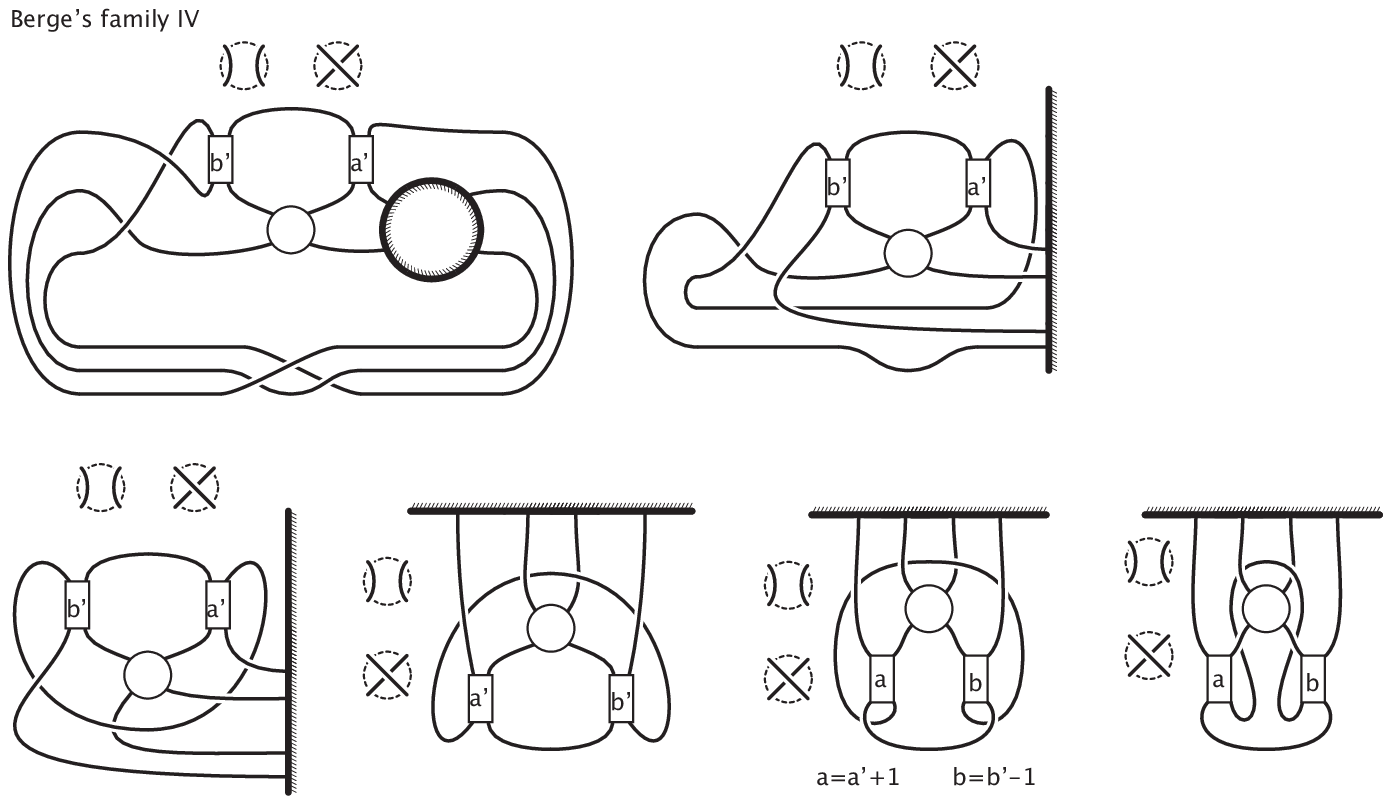}
\caption{}
\label{fig:bergeIV}
\end{figure}

\begin{figure}
\centering
\includegraphics{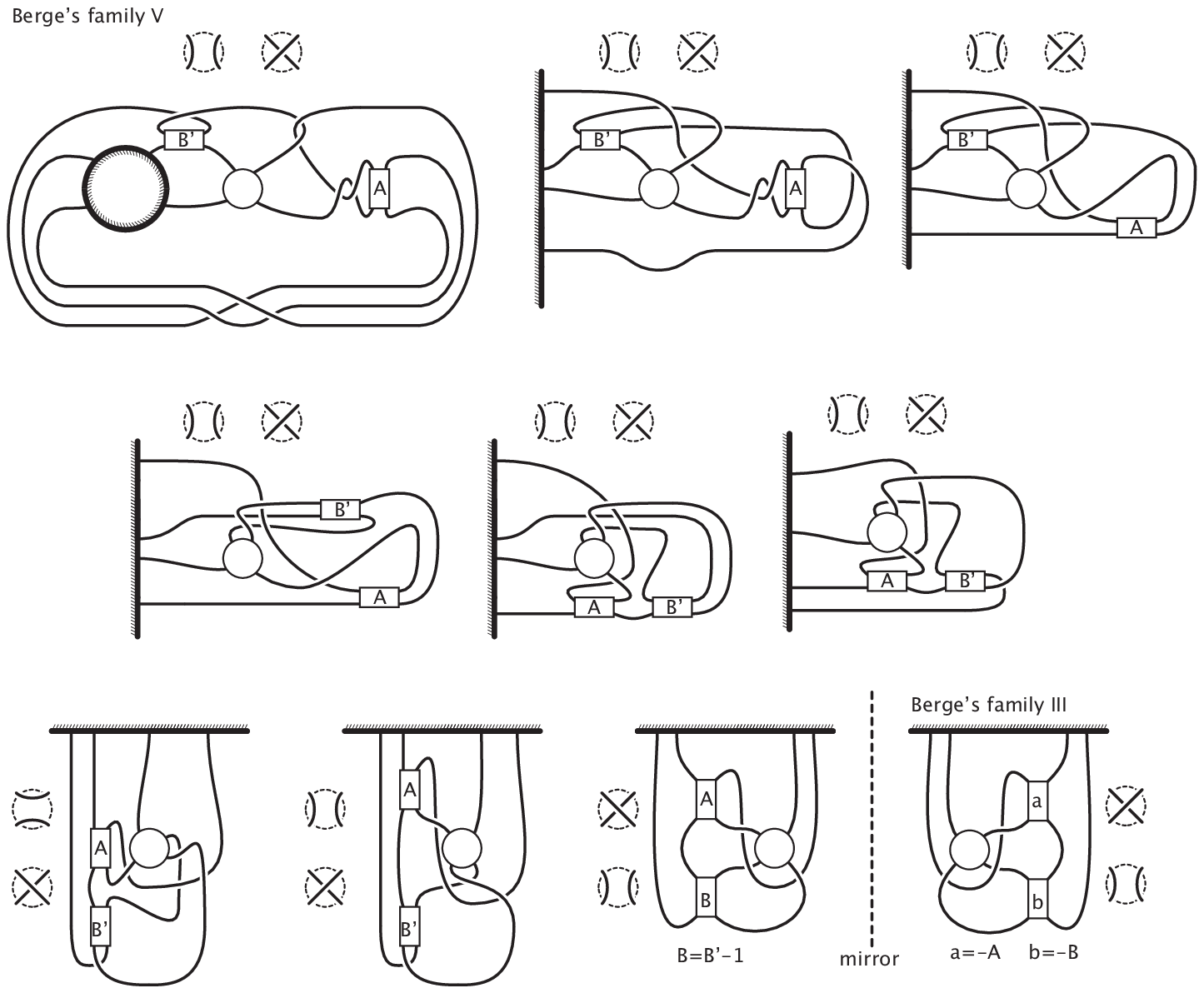}
\caption{}
\label{fig:bergeV}
\end{figure}

\begin{figure}
\centering
\includegraphics{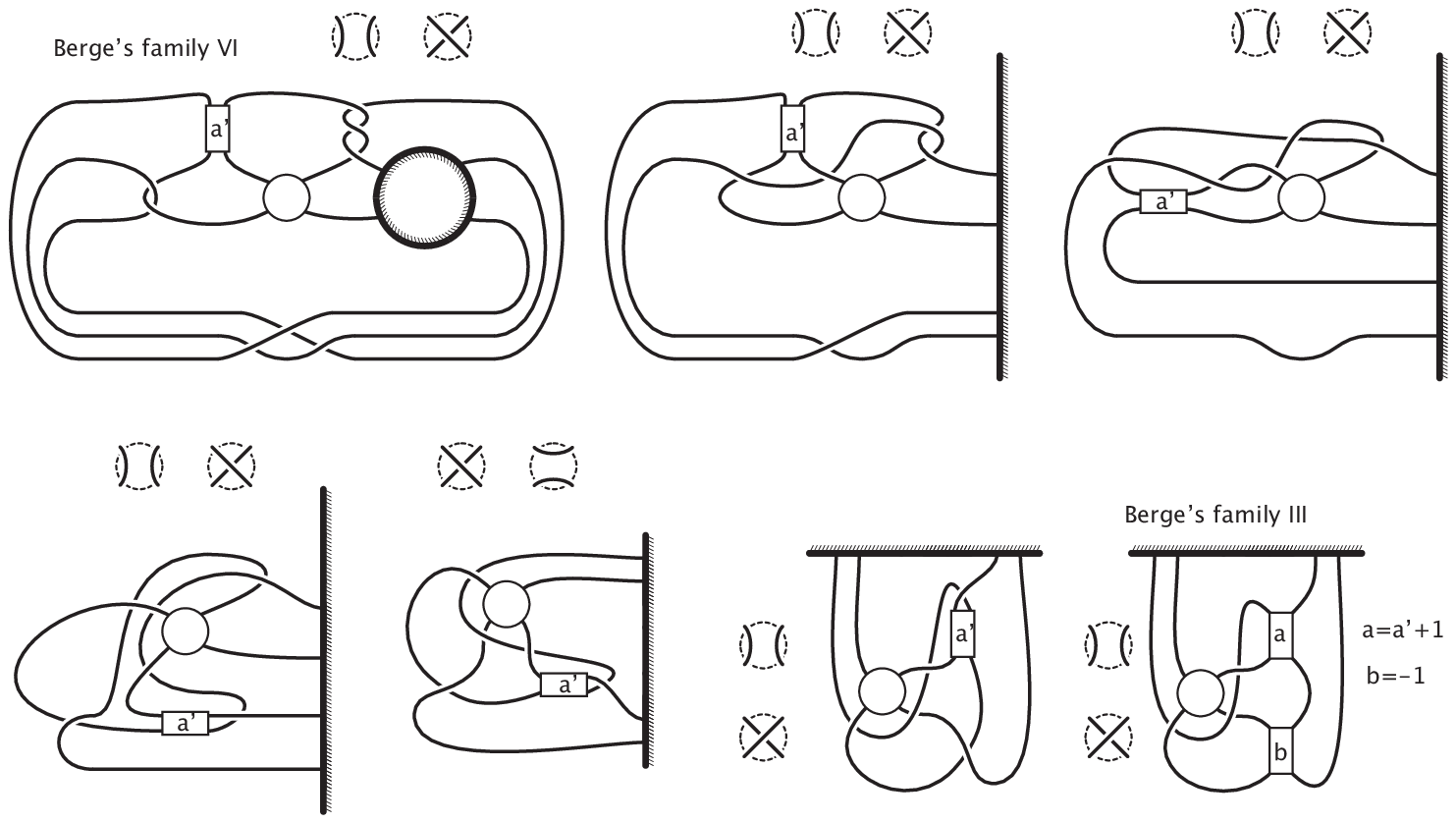}
\caption{}
\label{fig:bergeVI}
\end{figure}

\begin{figure}
\centering
\includegraphics{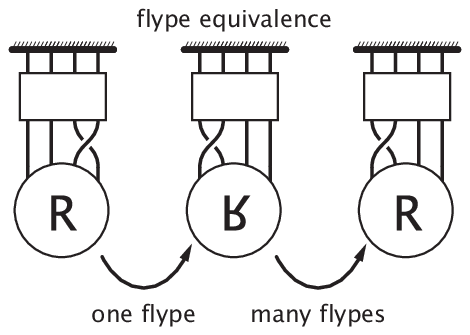}
\caption{}
\label{fig:flypeequivalence}
\end{figure}

\begin{figure}
\centering
\includegraphics{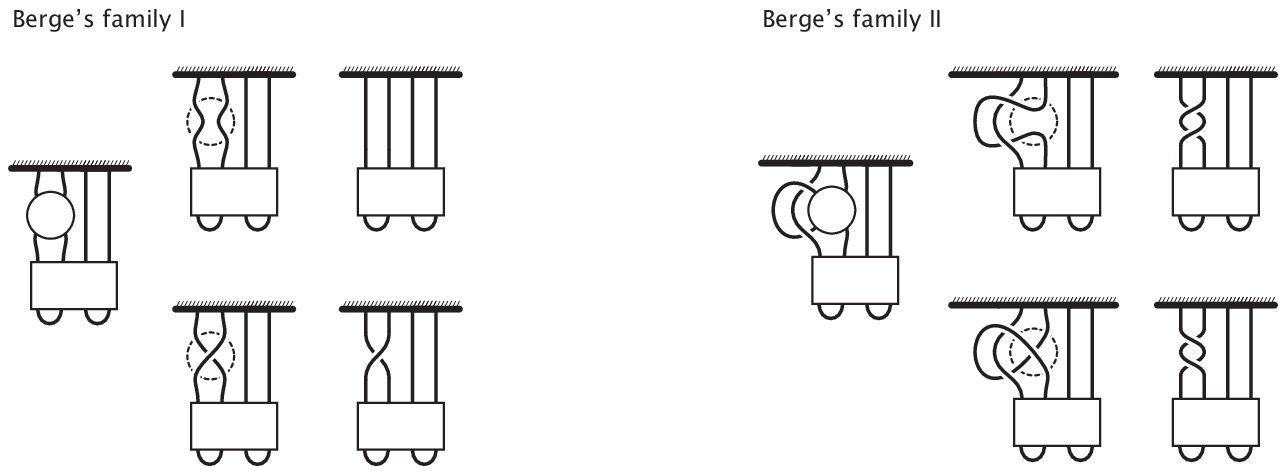}
\caption{}
\label{fig:bergechangeIandII}
\end{figure}

\begin{figure}
\centering
\includegraphics{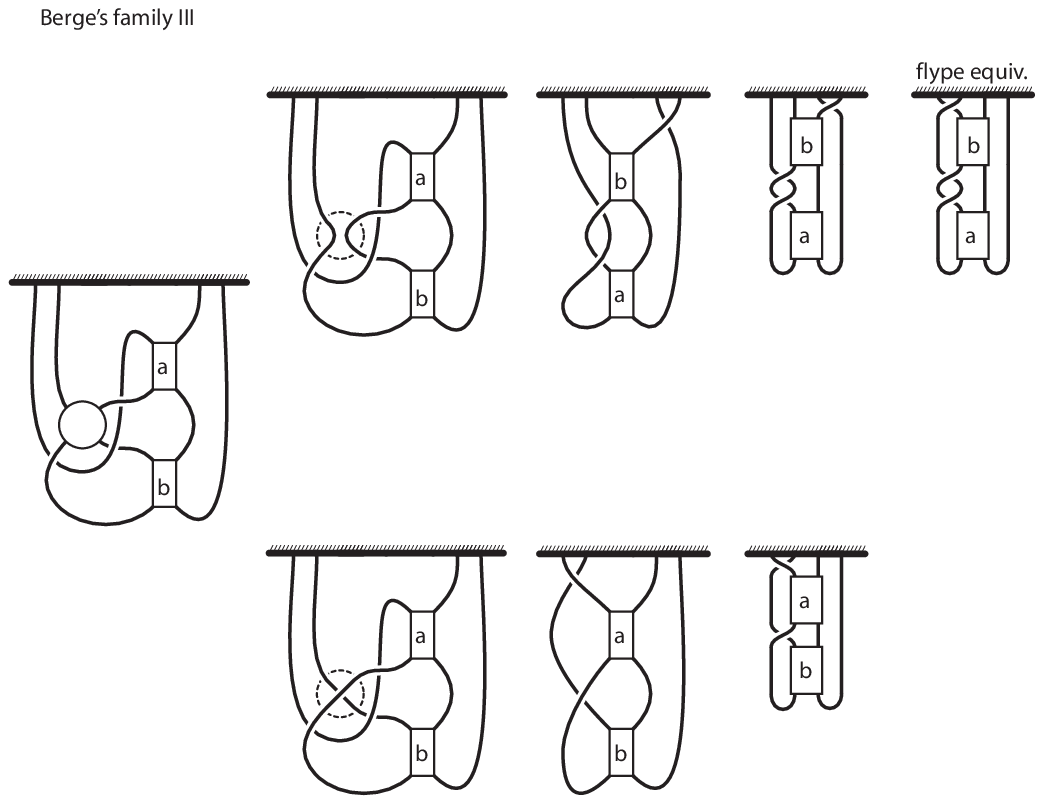}
\caption{}
\label{fig:bergechangeIII}
\end{figure}

\begin{figure}
\centering
\includegraphics{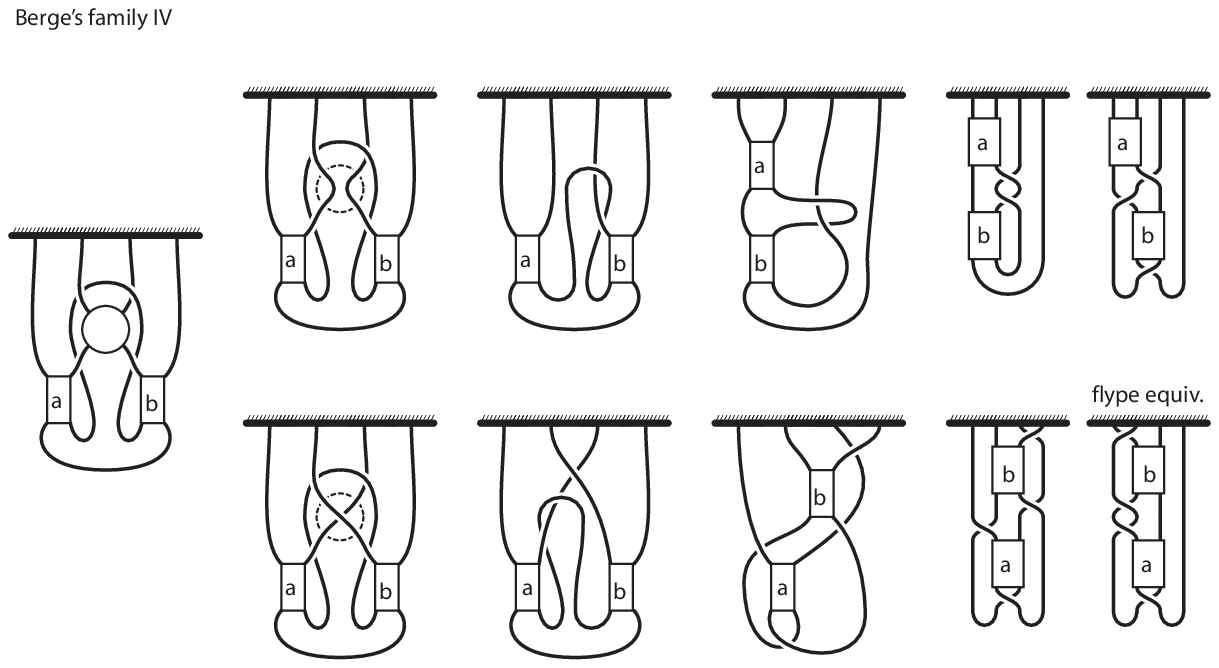}
\caption{}
\label{fig:bergechangeIV}
\end{figure}

\begin{figure}
\centering
\includegraphics{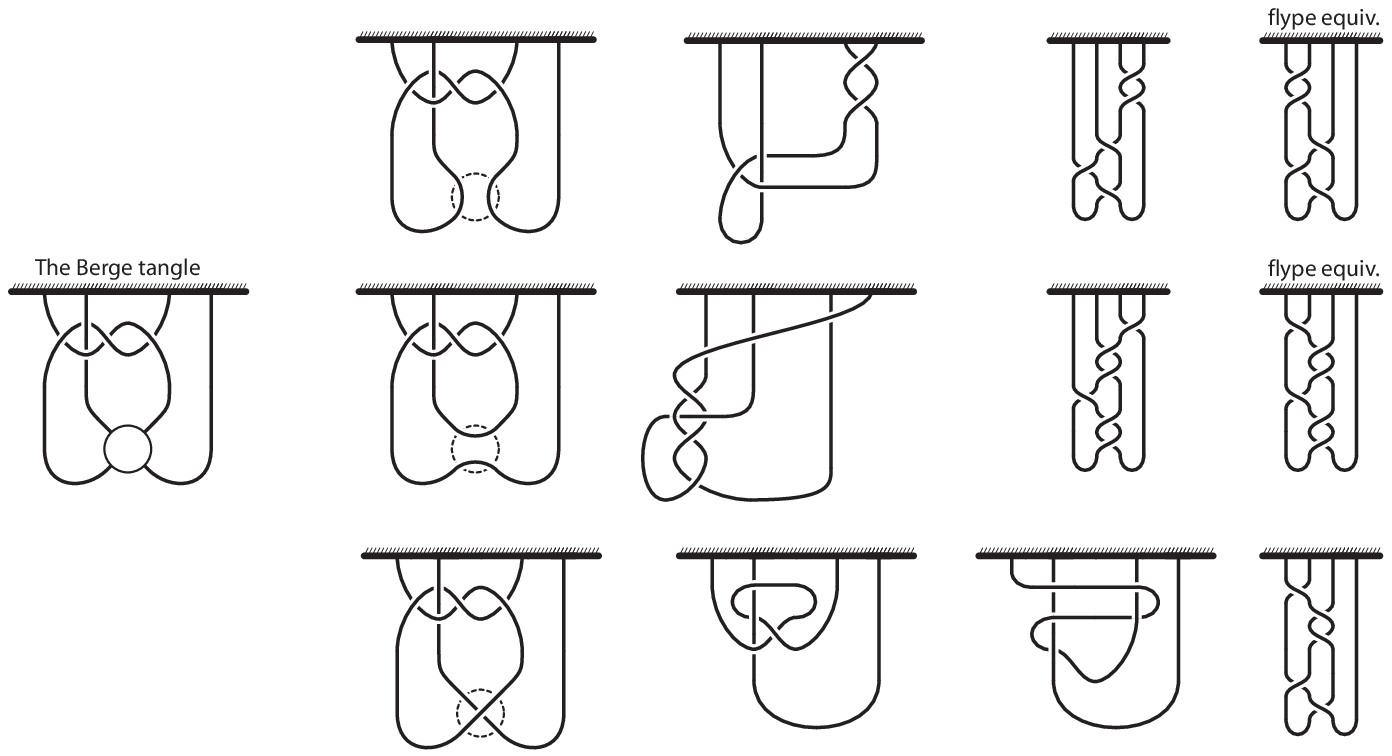}
\caption{}
\label{fig:BergeTangle}
\end{figure}

\begin{figure}
\centering
\includegraphics[height = 2in]{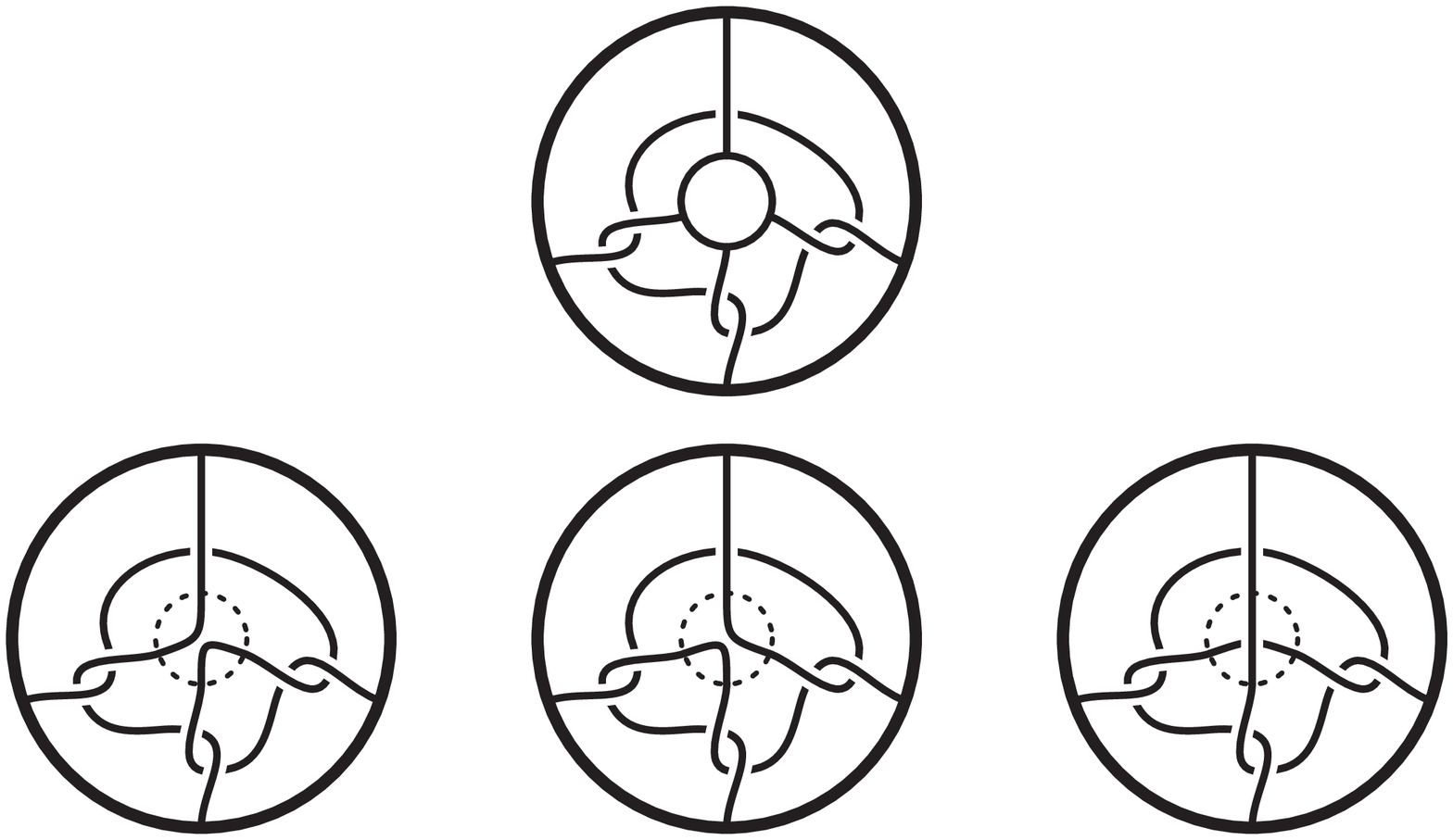}
\caption{}
\label{fig:tangleS1xS2}
\end{figure}

\begin{figure}
\centering
\includegraphics{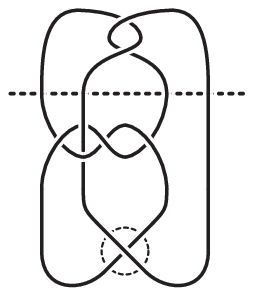}
\caption{}
\label{fig:8-17}
\end{figure}

\begin{figure}
\centering
\includegraphics{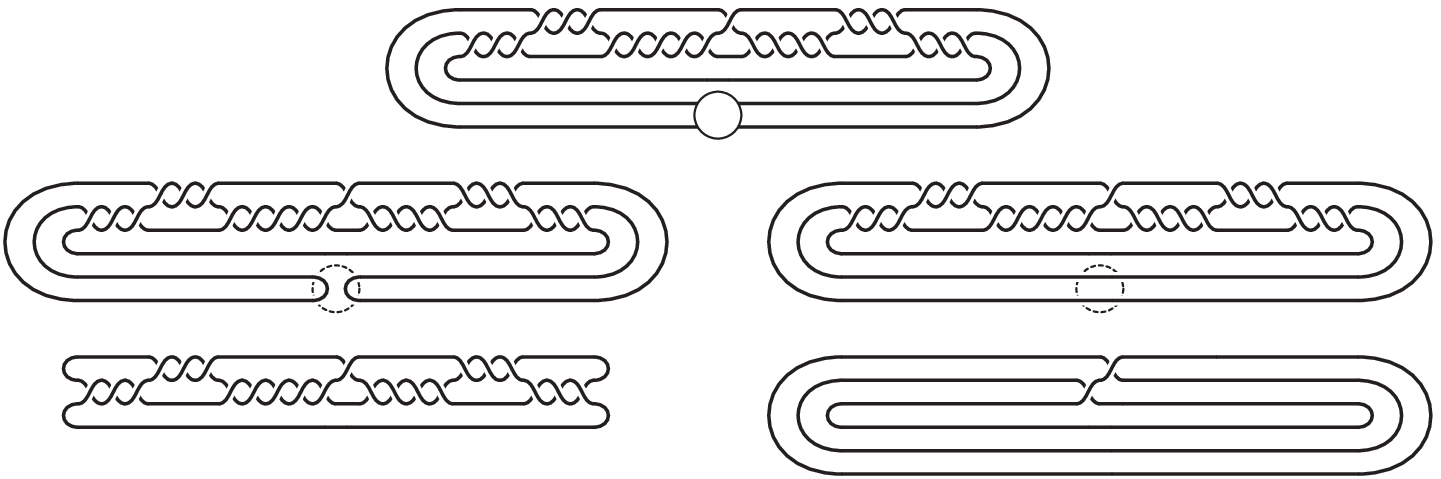}
\caption{}
\label{fig:largevoladjacency}
\end{figure}

\begin{figure}
\centering
\includegraphics[width=6in]{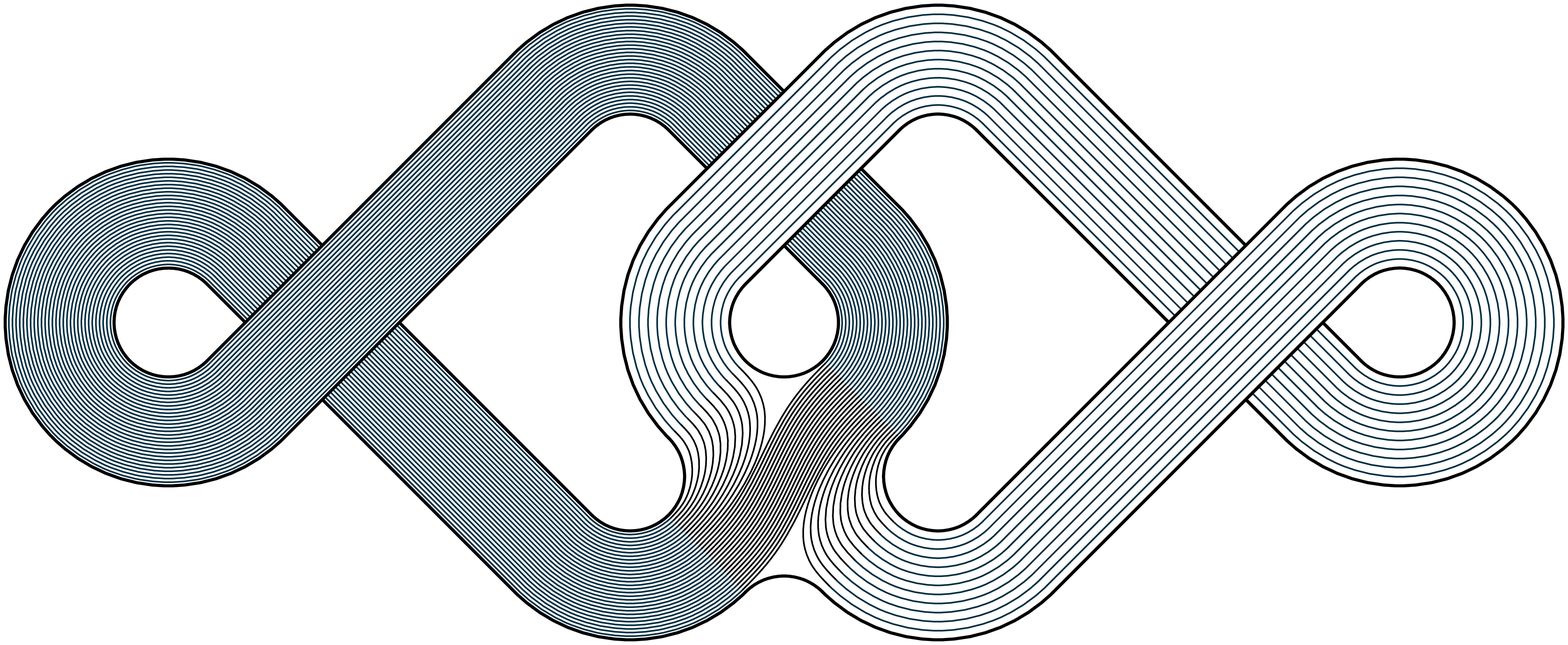}
\caption{}
\label{fig:fatknot}
\end{figure}

\begin{figure}
\centering
\includegraphics{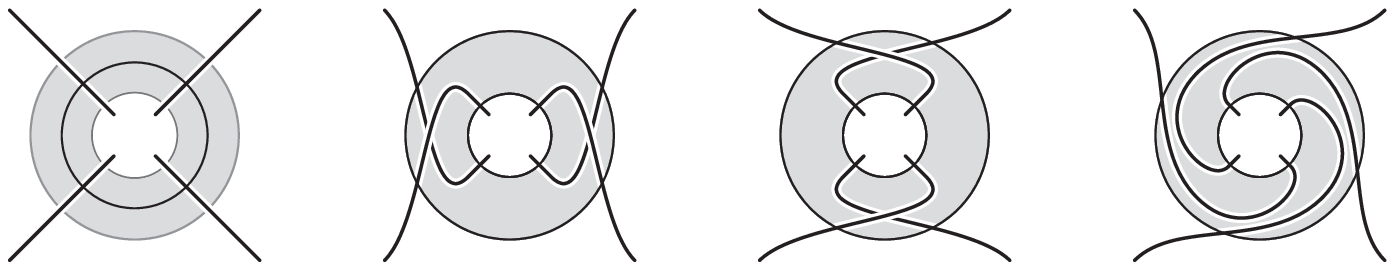}
\caption{}
\label{fig:mutation}
\end{figure}

\begin{figure}
\centering
\includegraphics{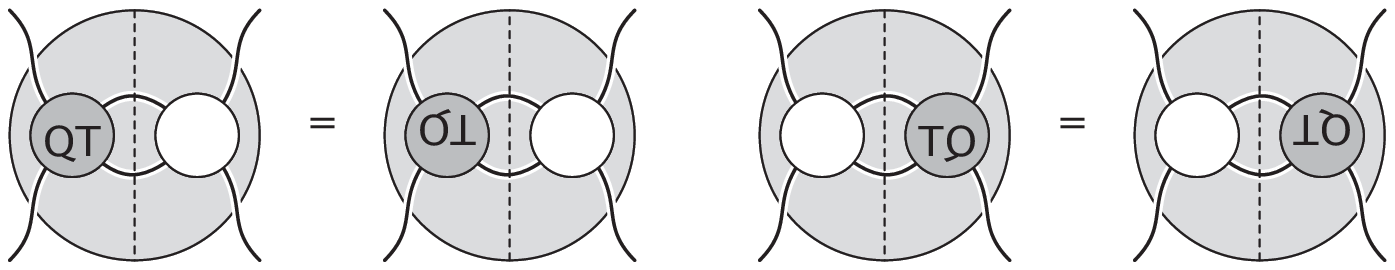}
\caption{}
\label{fig:cabletanglemutation}
\end{figure}

\begin{figure}
\centering
\includegraphics{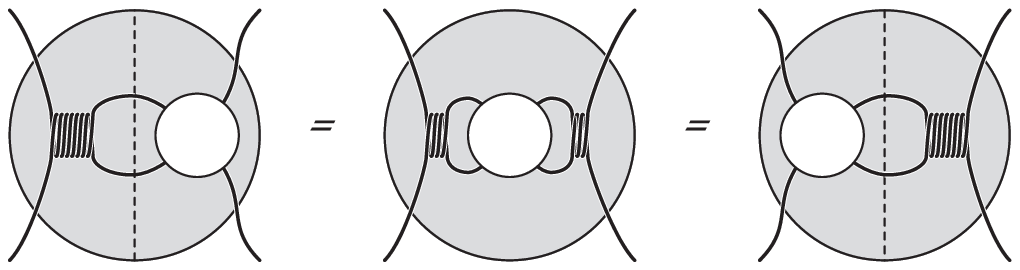}
\caption{}
\label{fig:equivalentsigmamutants}
\end{figure}

\begin{figure}
\centering
\includegraphics{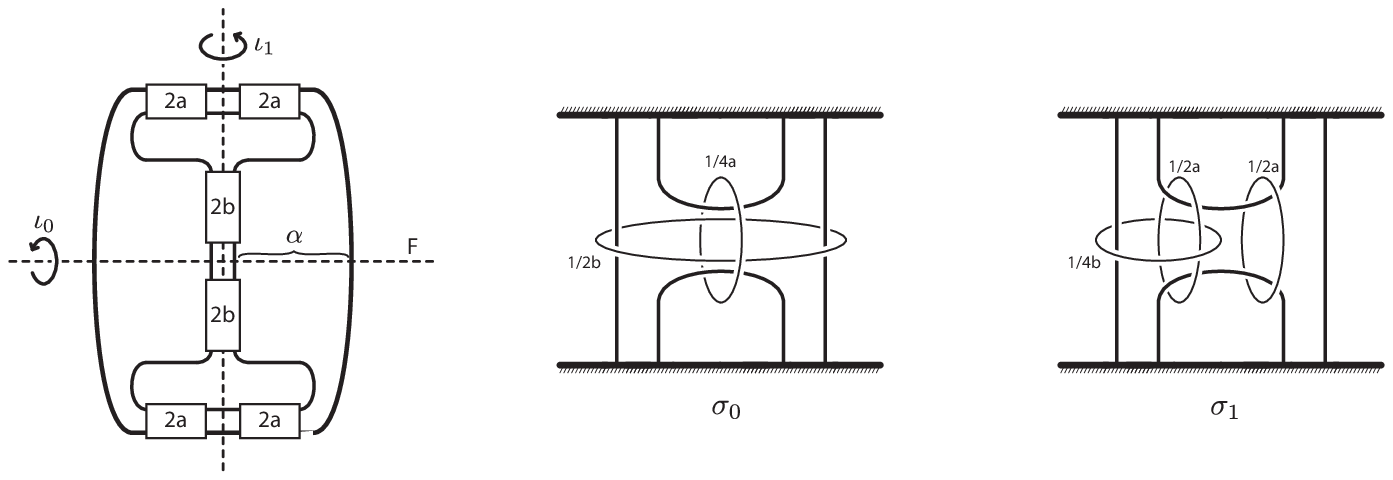}
\caption{}
\label{fig:twobridgelink}
\end{figure}

\begin{figure}
\centering
\includegraphics{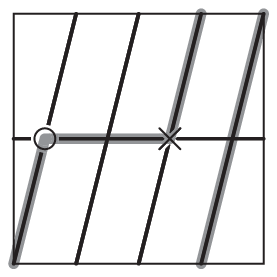}
\caption{}
\label{fig:L41isotopy}
\end{figure}

\begin{figure}
\centering
\includegraphics{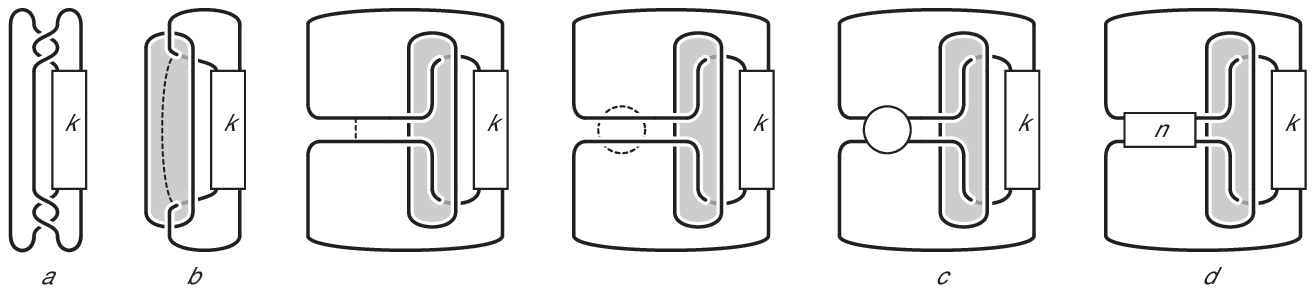}
\caption{}
\label{fig:kleinbranch}
\end{figure}

\begin{figure}
\centering
\includegraphics{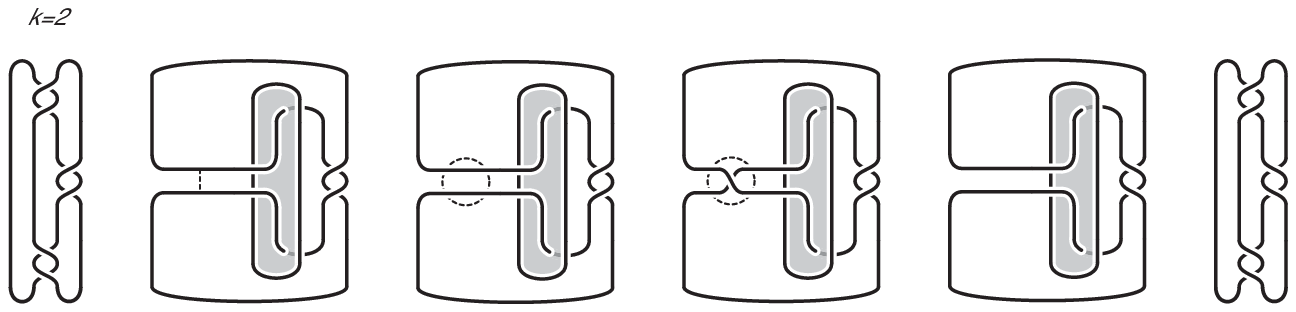}
\caption{}
\label{fig:kleinbranchk=2}
\end{figure}

\begin{figure}
\centering
\includegraphics{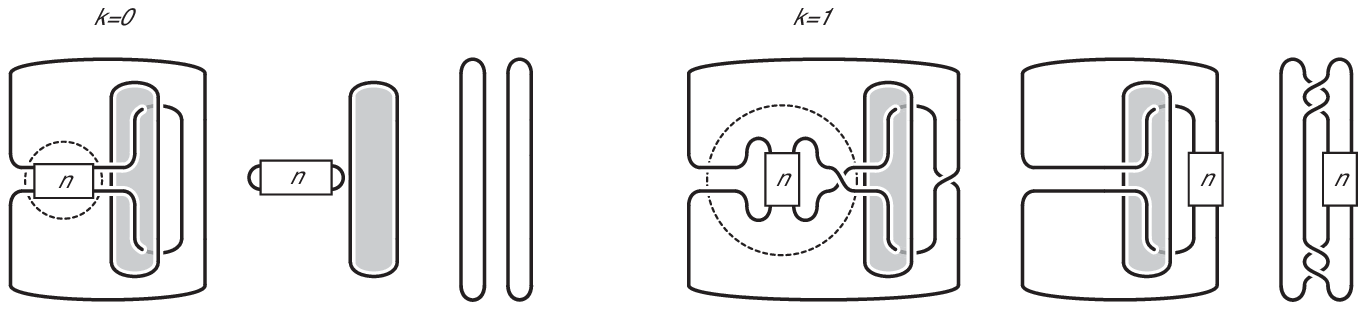}
\caption{}
\label{fig:kleinbranchk=01}
\end{figure}

\end{document}